   \def\MR#1{}
\theoremstyle{plain}
\newtheorem{theorem}{Theorem}[section]
\newtheorem{theoremA}{Theorem}
\newtheorem*{ack}{Acknowledgments}
\newtheorem{lemma}[theorem]{Lemma}
\newtheorem{proposition}[theorem]{Proposition}
\newtheorem{corollary}[theorem]{Corollary}
\theoremstyle{definition}
\newtheorem{definition}[theorem]{Definition}
\newtheorem{example}[theorem]{Example}
\newtheorem{remark}[theorem]{Remark}
\newcommand{\R}{\mathbb{R}}
\newcommand{\C}{\mathbb{C}}
\newcommand{\N}{\mathbb{N}}
\newcommand{\A}{\mathcal{A}}
\newcommand{\Z}{\mathbb{Z}}
\newcommand{\Lcal}{\mathcal{L}}
\newcommand{\Kcal}{\mathcal{K}}
\newcommand{\Ucal}{\mathcal{U}}
\newcommand{\Ccal}{\mathcal{C}}
\newcommand{\U}{\mathcal{U}}
\newcommand{\ot}{\otimes}
\newcommand{\id}{{\rm id}}
\newcommand{\Qhom}{\mathcal{Q}\mathrm{Hom}}
\newcommand{\qhom}{\mathrm{QHom}}
\newcommand{\omegatilde}{\widetilde{\omega}}
\newcommand{\rhotilde}{\widetilde{\rho}}
\newcommand{\UNA}{\Ucal_N(A)}
\newcommand{\UNAomega}{\Ucal_{\omega,\mu}(A)}
\newcommand{\UCP}{{\rm UCP}}
\title{Operator Algebras of Universal Quantum Homomorphisms}
\author{Pierre Fima}
\address{Pierre Fima
\newline
Universit\'e Paris Cit\'e and Sorbonne Universit\'e, CNRS, IMJ-PRG, F-75013 Paris, France.}
\email{pierre.fima@imj-prg.fr}
\author{Malay Mandal}
\address{Malay Mandal
\newline
The Institute of Mathematical Sciences, X6VW+FP7, 4th Cross St, CIT Campus, Tharamani, Chennai, Tamil Nadu 600113.}
\email{malay10mandal@gmail.com}
\author{Issan Patri}
\address{Issan Patri
\newline
Theoretical Statistics and Mathematics Unit, Indian Statistical Institute, Delhi Centre, 7 S. J. S. Sansanwal Marg, New
Delhi 110016, India}
\email{issanp@isid.ac.in}
\date{}
\begin{document}

\maketitle

\begin{abstract}
Given two unital C*-algebras $A$ and $B$, we study, when it exists, the universal unital C*-algebra $\Ucal(A,B)$ generated by the coefficients of a unital $*$-homomorphism $\rho\,:\, A\rightarrow B\ot\Ucal(A,B)$. When $B$ is finite dimensional, it is well known that $\Ucal(A,B)$ exists and we study in this case properties LP, RFD, primitiveness and the UCT as well as $K$-theory. We also construct a reduced version of $\Ucal(A,B)$ for which we study exactness, nuclearity, simplicity, absence of non-trivial projection and $K$-theory. Then, we consider the von Neumann algebra generated by the reduced version and study factoriality, amenability, fullness, primeness, absence of Cartan, Connes' invariants, Haagerup property and Connes' embeddability. Next, we consider the case when $B$ is infinite dimensional: we show that for any non-trivial separable unital C*-algebra $A$, $\Ucal(A,B)$ exists if and only if $B$ is finite dimensional. Nevertheless, we show that there exists a unique unital locally C*-algebra generated by the coefficients of a unital continuous $*$-homomorphism $\rho\,:\, A\rightarrow B\ot\Ucal(A,B)$. Finally, we study a natural quantum semigroup structure on $\Ucal(A,A)$.
\end{abstract}

\section{Introduction}

\noindent In this paper, we consider quantum homomorphisms between \textit{unital} C*-algebras : given two unital C*-algebras $A$ and $B$, a quantum homomorphism from $A$ to $B$ is a unital $*$-homomorphism $A\rightarrow B\ot C$, where $C$ is some unital C*-algebra (and the tensor product is the minimal one). Note that our terminology of quantum homomorphism differs from \cite{So08} where such a quantum homomorphism is called a quantum family of maps from $B$ to $A$.

\vspace{0.2cm}

\noindent A \textit{universal quantum homomorphism from $A$ to $B$} is, if it exists, a unital $*$-homomorphism $\rho\,:\,A\rightarrow B\ot \U$ such that for any unital $*$-homomorphism $\pi\,:\,A\rightarrow B\ot C$ there exists a unique unital $*$-homomorphism $\widetilde{\pi}\,:\,\U\rightarrow C$ such that $(\id\ot\widetilde{\pi})\rho=\pi$. When it exists, such a quantum homomorphism is unique up to a canonical isomorphism. Let us call a pair of unital C*-algebras $(A,B)$ \textit{matched} if the universal quantum homomorphism from $A$ to $B$ exists and let us denote its associated unital C*-algebra by $\U(A,B)$. It is a well known observation that, for any $A$, the pair $(A,B)$ is matched whenever $B$ is finite dimensional. This observation actually goes back to Brown in 1981 \cite{Br81} in the cases $A\in\{\C^2,C^*(\Z)\}$ and $B=M_N(\C)$. For general $A$ and finite dimensional $B$, this observation follows from the work of Phillips in 1988 \cite{Ph88} who also realized that the pair $(\C^2,C^*(\Z))$ is not matched.

\vspace{0.2cm}

\noindent Some properties of the C*-algebra $\U(A,M_N(\C))$ are known : McClanahan showed that the C*-algebra  $\U(A,M_N(\C))$ has no non-trivial projections if and only if $A$ has no non-trivial projections and he also gave a computation of the K-theoretic groups of $\U(A,M_N(\C))$ for $A\in\{\C^2,C^*(\Z)\}$ \cite{McC92, McC95}. Harris showed that $\U(C^*(\Z),M_N(\C))$ is RFD \cite{Ha19}. It has also been shown by the second author that $\U(C^*(\Z),M_N(\C))$ is primitive and has the lifting property \cite{Ma23}.

\vspace{0.2cm}

\noindent In the second section of this paper, we review and generalize this results. For example we observe that for any unital C*-algebra $A$ and any finite dimensional C*-algebra $B$ one has (see Proposition \ref{PropLifting}) :
\begin{itemize}
    \item $\U(A,B)$ has LP (resp. LLP) if and only if $A$ has LP (resp. LLP).
    \item $\U(A,B)$ is RFD if and only if $A$ is RFD.
    \item If $A$ is RFD and $({\rm dim}(A)-1)({\rm dim}(B)-1)\geq 2$ then $\U(A,B)$ is primitive.
    \item If $A$ satisfies the UCT then $\Ucal(A,B)$ satisfies the UCT.
\end{itemize}
We also provide a computation of the $K$-theoretic groups of $\U(A,M_N(\C))$.

\vspace{0.2cm}

\noindent In \cite{McC92, McC95}, McClanahan introduced a natural state $\omega$ on $\U(C^*(\Gamma),M_N(\C))$, where $\Gamma$ is a discrete countable group and study the C*-algebra $\U_\omega(C^*(\Gamma),M_N(\C))$ obtained by GNS construction. He showed that if $\tau_*(K_0(C^*_r(\Gamma)))\subseteq\frac{1}{N}\Z$ then $\U_\omega(C^*(\Gamma),M_N(\C))$ is simple without non-trivial projection ($N>1$ and $\vert\Gamma\vert>1$), where $\tau_*$ is the group homomorphism induced by the canonical trace on $C^*_r(\Gamma)$.

\vspace{0.2cm}

\noindent In the third section of this paper, we consider a pair $(A,B)$ of unital C*-algebras and we assume that $B$ is finite dimensional. Given two GNS-faithful states $\omega\in A^*$ and $\mu\in B^*$, we introduce a natural state $\widetilde{\omega}\in\U(A,B)^*$ and consider the C*-algebra $\U_{\omega,\mu}(A,B)$ obtained by GNS construction of $\widetilde{\omega}$. Our construction generalizes McClanahan's construction.

\vspace{0.2cm}

\noindent Write $B=\bigoplus_{\kappa=1}^KM_{N_\kappa}(\C)$ and $N:=\sum_{\kappa=1}^KN_\kappa$. We show the following.

\begin{theoremA}
The following holds.
\begin{enumerate}
    %\item \textbf{write exact sequence in KK-theory}
    \item $\Ucal_{\omega,\mu}(A,B)$ is exact if and only if $A$ is exact.
    \item If both $\omega$ and $\mu$ are pure then $\U_{\omega,\mu}(A,B)$ is nuclear if and only if $A$ is nuclear.
    \item If $\omega$ and $\mu$ are faithful traces, $\omega$ is diffuse and $N\geq 2$ then $\Ucal_{\omega,\mu}(A,B)$ is simple with unique trace and has stable rank $1$.
\end{enumerate}
\end{theoremA}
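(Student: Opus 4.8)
The plan is to deduce all three parts from an explicit free-probabilistic model of the GNS construction. First I would compute the GNS triple of $\omegatilde$ and identify $(\U_{\omega,\mu}(A,B),\omegatilde)$, in a state-preserving fashion, with a reduced free product (possibly amalgamated over a finite-dimensional subalgebra arising from $B=\bigoplus_\kappa M_{N_\kappa}(\C)$), in which $(A,\omega)$ is recovered as a distinguished corner/factor and the matrix coefficients $\rho^\kappa_{ij}$ assemble into finite-dimensional, Toeplitz/Cuntz-type pieces governed by $(B,\mu)$. Concretely this means realizing $H_{\omegatilde}$ as a free Fock space on which the generators act by explicit creation and diagonal operators and checking the requisite freeness; this identification is where the real work lies, and the remaining steps are largely applications of known permanence theorems.

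Granting the model, both ``only if'' implications are short. Through $\rho$ the algebra $A$ embeds into $B\ot\U_{\omega,\mu}(A,B)$ in a state-preserving way, and the construction supplies a conditional expectation onto this copy. Because $B$ is finite dimensional, exactness of $\U_{\omega,\mu}(A,B)$ forces exactness of $B\ot\U_{\omega,\mu}(A,B)$ and hence of its subalgebra $A$, which is the ``only if'' of (1). Likewise, nuclearity of $\U_{\omega,\mu}(A,B)$ passes to $B\ot\U_{\omega,\mu}(A,B)$ and then, being inherited by the range of a conditional expectation, down to $A$; this is the ``only if'' of (2).

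For the ``if'' part of (1) I would invoke permanence of exactness under reduced free products amalgamated over a finite-dimensional (hence nuclear) subalgebra: if $A$ is exact then all factors are exact, and a reduced free product of exact C*-algebras over a finite-dimensional algebra is again exact (Dykema, Germain). The ``if'' part of (2) is the delicate point and is where I expect the main obstacle. A genuine reduced free product is essentially never nuclear, so the purity hypotheses must be exactly what degenerates the free structure: when $\mu$ is pure it is a vector state carried by a single block $M_{N_{\kappa_0}}(\C)$, and when $\omega$ is pure the GNS data of $A$ collapses, with the effect that the model becomes an amenable-type object --- an inductive limit or crossed-product built from $A$ and nuclear pieces rather than a free product --- along which nuclearity of $A$ propagates. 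Making this collapse precise, and showing that exactly purity (and nothing weaker) triggers it, is the crux of the theorem.

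For (3) I would feed the model into the simplicity and unique-trace theory of reduced free products. Diffuseness of the faithful trace $\omega$ yields a Haar unitary inside the $A$-factor, while faithfulness of $\mu$ together with $N\geq 2$ supplies trace-zero unitaries on the $B$-side; these verify Avitzour's freeness condition and thereby give simplicity of $\U_{\omega,\mu}(A,B)$ together with uniqueness of its trace (in the amalgamated setting, the corresponding amalgamated versions of these results would be applied). Stable rank $1$ then follows from the stable-rank results for reduced free products containing a diffuse subalgebra (Dykema--R{\o}rdam, Haagerup--Thorbj{\o}rnsen). The only care needed is to confirm that the diffuse-plus-noncommutative free pair genuinely survives in the model and that $N\geq 2$ is precisely the threshold producing the trace-zero unitaries the criterion requires.
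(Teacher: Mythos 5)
Your structural step and parts (1) do match the paper, and more easily than you expect: the paper obtains exactly the identification you postulate, namely a state-preserving isomorphism $M_{N_\kappa}(\C)\ot\U_{\omega,\mu_\kappa}(A)\simeq A_\omega\underset{r}{*}M_{N_\kappa}(\C)$, but it falls out of the algebraic isomorphism $A*M_N(\C)\simeq M_N(\C)\ot\U_N(A)$ (Remark \ref{RmkIso}) combined with Lemma \ref{LemDualState}; no Fock-space analysis and no amalgamation are needed, since $\Ucal_{\omega,\mu}(A,B)$ is then the \emph{plain} reduced free product of the one-block algebras $\U_{\omega,\mu_\kappa}(A)$ with respect to the states $\widetilde{\omega}_\kappa$. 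From there, exactness is \cite{Dy04} plus passage to subalgebras, as you say, and your ``only if'' arguments via the conditional expectations onto the free factors are the paper's arguments. The genuine gap is part (2): you declare the ``if'' direction to be ``the crux'' and leave it as a heuristic about purity collapsing the free structure. Your heuristic is correct, but it is not a proof, and in the paper this step is not new work at all: it is a citation of \cite[Theorem 4.8.7]{BO08} (a reduced free product of nuclear C*-algebras with respect to GNS-faithful states, at least one of which is pure, is nuclear), applied inductively over the $K$ blocks. Moreover, the induction needs an ingredient you never mention: the intermediate free-product states must themselves be pure. The paper proves (Theorem \ref{THMReducedMN}(3), via Voiculescu's description of the commutant of a reduced free product) that purity of $\omega$ and $\mu_\kappa$ forces purity of $\widetilde{\omega}_\kappa$; without this, iterating the nuclearity theorem across the blocks has nothing to run on.

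Part (3) also has a real, though subtler, gap. Avitzour's simplicity/unique-trace criterion and the Dykema--Haagerup--R{\o}rdam stable rank theorem require honest trace-zero (indeed Haar) unitaries \emph{inside the C*-algebra} $A_\omega$. Diffuseness of $\omega$ is a von Neumann-algebraic hypothesis --- it says $A''$ is diffuse --- and it does not obviously yield such a unitary at the C*-level; closing precisely this gap is the point of the results of Thiel that the paper uses instead: \cite[Theorem 9.1]{Th22} packages simplicity, unique trace and stable rank $1$ in one statement (combined with \cite[Theorem 3.3]{Ri83} to descend from $M_N(\C)\ot D$ to $D$), and \cite[Corollary 5.6]{Th22} is needed in the multi-block induction to know that the trace $\widetilde{\omega}_K$ on the simple one-block algebra $\U_{\omega,\mu_K}(A)$ is again diffuse, so that the free product decomposition $\Ucal_{\omega,\mu}(A,B)=D\underset{r}{*}\U_{\omega,\mu_K}(A)$ stays within the scope of the theorem. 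So if you insist on the Avitzour/DHR route you must first prove that a faithful diffuse trace admits a Haar unitary in the C*-algebra --- a theorem, not a formality --- and then still handle the iteration over blocks.
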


\noindent When $B=M_N(\C)$ we obtain the following.

\begin{theoremA} The following holds.
\begin{enumerate}
\item The C*-algebras $\U_{\omega,\mu}(A,M_N(\C)))$ and $\U(A,M_N(\C)))$ are $KK$-equivalent.

    \item If $\omega$ is a faithful traces on $A$, $N\geq 2$ and $\omega_*(K_0(A))\subset\frac{1}{N}\Z$, where $\omega_*\,:\,K_0(A)\rightarrow\R$ is the group homomorphism induced by $\omega$, then $\U_{\omega,{\rm tr}}(A,M_N(\C)))$ has no non-trivial projection, where ${\rm tr}$ is the unique tracial state on $M_N(\C)$.
\end{enumerate}
\end{theoremA}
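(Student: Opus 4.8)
The plan for (1) is to show that the canonical surjection $\Lambda : \U(A,M_N(\C)) \to \Ucal_{\omega,\mu}(A,M_N(\C))$ coming from the GNS construction of $\omegatilde$ is a $KK$-equivalence; equivalently, that the passage from the universal to the GNS-reduced algebra exhibits a $K$-amenability phenomenon analogous to Cuntz's theorem comparing full and reduced free group C*-algebras. Since no hypothesis is placed on $A$ (in particular $A$ need not be nuclear or amenable), the argument cannot pass through amenability and must instead exploit the intrinsic freeness of the universal homomorphism. First I would realise the GNS Hilbert space $\mathcal H_{\omegatilde}$ of $\omegatilde$ as a full Fock-type space $\mathcal F$ assembled from the GNS data of $\omega$ and $\mu$, under which $\Ucal_{\omega,\mu}(A,M_N(\C))$ is generated by the Toeplitz-like images of the matrix coefficients $\rho_{ij}$ acting on $\mathcal F$.

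Second, with this model in hand, I would build an explicit Kasparov module from the grading (number) operator on $\mathcal F$ together with a partial isometry implementing the creation shift, and show that it represents a class $\beta \in KK\bigl(\Ucal_{\omega,\mu}(A,M_N(\C)),\U(A,M_N(\C))\bigr)$ satisfying $[\Lambda]\cdot\beta = \id$ and $\beta\cdot[\Lambda]=\id$ in the respective $KK$-groups; the two compositions are checked to be the identity by the standard homotopy that rotates the shift into the identity, exactly as in the Cuntz / Julg--Valette computation. Alternatively, identifying $\U(A,M_N(\C))$ and $\Ucal_{\omega,\mu}(A,M_N(\C))$ as the full and reduced objects attached to a single C*-correspondence, one could instead invoke Pimsner's $KK$-equivalence $\mathcal T_X\simeq_{KK} D$ together with the Cuntz--Pimsner six-term sequence, or the $KK$-equivalence of full and reduced free products of Germain; but I expect the explicit Fock module to be the most transparent route. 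The main obstacle is precisely the first step: producing the Fock model of $\omegatilde$ cleanly enough that the comparison operator is honestly Fredholm and that the homotopy manifestly stays inside the algebras.

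For (2) I would transport the $K$-theory along (1) and then control the canonical trace. Since $\omega$ and $\tr$ are faithful traces, $\omegatilde$ is a faithful tracial state on $\Ucal_{\omega,\tr}(A,M_N(\C))$; write $\tau:=\omegatilde$ and let $\tau_*\colon K_0(\Ucal_{\omega,\tr}(A,M_N(\C)))\to\R$ be the induced homomorphism. The key computation is that for a projection $p$ over $A$ one has $(\tr\ot\tau)(\rho(p))=\omega(p)$ (which is how $\omegatilde$ is normalised from $\omega$ and $\tr$), so that under the identifications $K_0(M_N(\C)\ot\Ucal_{\omega,\tr}(A,M_N(\C)))\cong K_0(\Ucal_{\omega,\tr}(A,M_N(\C)))$ and $(\tr\ot\tau)_*=\tfrac1N\tau_*$ one obtains $\tau_*(\rho_*[p]) = N\,\omega_*[p]\in\Z$, using the hypothesis $\omega_*(K_0(A))\subset\frac1N\Z$. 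Hence $\tau_*$ is integer valued on $\rho_*(K_0(A))$ and on $[1]$.

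Finally I would feed in the explicit computation of $K_0(\U(A,M_N(\C)))$ from Section 2: by (1) the map $\Lambda_*$ identifies $K_0(\Ucal_{\omega,\tr}(A,M_N(\C)))$ with $K_0(\U(A,M_N(\C)))$, and since $\tau\circ\Lambda=\omegatilde$ we have $\tau_*\circ\Lambda_*=\omegatilde_*$, so it suffices to check that $\omegatilde_*$ is $\Z$-valued on all of $K_0(\U(A,M_N(\C)))$. On the summand arising from $K_0(A)$ this is the integrality just established, while on the summand arising from $K_1(A)$ the trace vanishes, these classes being produced by an index/exponential construction and hence carrying zero trace. Therefore $\tau_*(K_0)\subseteq\Z$, so any projection $p$ satisfies $\tau(p)\in\Z\cap[0,1]=\{0,1\}$, and faithfulness of $\tau$ forces $p=0$ or $p=1$. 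The delicate point is this last integrality/vanishing check, which requires the precise form of the $K$-theory computation together with an explicit understanding of how $\omegatilde$ pairs with the part of $K_0$ coming from $K_1(A)$.
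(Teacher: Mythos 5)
For part (1) your proposal has a genuine gap. The route you designate as primary --- realising the GNS space of $\omegatilde$ as a Fock space and producing a Julg--Valette/Cuntz-type Kasparov module whose compositions with $[\Lambda]$ are trivialised by ``the standard homotopy that rotates the shift into the identity'' --- is not a proof but a plan to reprove, by hand and for arbitrary non-nuclear $A$, the theorem that the canonical surjection from a full to a reduced free product is a $KK$-equivalence. That statement in the generality needed here (no nuclearity, only GNS-faithfulness of the states) is precisely the main input the paper takes from \cite{FG20}; Germain's original result, which you invoke as a fallback, requires nuclearity of the free factors, which you yourself point out is unavailable. No routine Fock-space rotation argument is known to yield this, and your own text concedes that the key step is unresolved. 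The feasible route is the one you relegate to an aside, and it is the paper's actual proof: by Remark \ref{RmkIso}, $\U(A,M_N(\C))$ is Morita equivalent to $M_N(\C)\ot\U(A,M_N(\C))\simeq A*M_N(\C)$; by \cite{FG20} the surjection $A*M_N(\C)\rightarrow A\underset{r}{*}M_N(\C)$ is a $KK$-equivalence; and $A\underset{r}{*}M_N(\C)\simeq M_N(\C)\ot\Ucal_{\omega,\mu}(A,M_N(\C))$, which is Morita equivalent to $\Ucal_{\omega,\mu}(A,M_N(\C))$. Even for this route, the step you never establish --- and which your ``Fock model'' step is secretly equivalent to --- is the last identification: one must prove that under the isomorphism of Remark \ref{RmkIso} the state $\mu\ot\omegatilde$ corresponds to the free product state $\omega*\mu$, so that the GNS construction of $\omegatilde$ really yields the reduced free product. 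This is the technical heart of the matter (Lemma \ref{LemDualState}, Proposition \ref{PropState} and the first assertion of Theorem \ref{THMReducedMN} in the paper), and without it neither of your two routes gets off the ground.

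For part (2) your skeleton is essentially the paper's (integrality of the trace on $K_0$, then faithfulness of $\omegatilde$), but two points need repair. First, your argument is conditional on (1), whereas the paper argues directly on the reduced free product: the $K$-theory exact sequence of \cite{FG20} gives $\chi_*(K_0(A\underset{r}{*}M_N(\C)))=\omega_*(K_0(A))+{\rm tr}_*(K_0(M_N(\C)))\subset\frac{1}{N}\Z$ for the free product trace $\chi=\omega*{\rm tr}$, and then, for a projection $p$, the identity $\chi(\psi_\omega^{-1}(e_{11}\ot p))=\frac{1}{N}\omegatilde(p)$ forces $\omegatilde(p)\in\Z\cap[0,1]$. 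Second, and more importantly, the ``delicate point'' you flag --- that the trace must be shown to vanish on ``the summand of $K_0$ arising from $K_1(A)$'' --- rests on a misreading of the $K$-theory: by Corollary \ref{CorKtheory1} (the six-term sequence, using $K_1(\C)=0$), $K_0(\U(A,M_N(\C)))=(K_0(A)\oplus\Z x)/\Z([1_A]-Nx)$, so there is no $K_1(A)$ contribution to $K_0$ at all; $K_1(A)$ enters only $K_1$. What you actually need, and essentially have, is that $K_0$ is generated by the Morita transports of $\rho_*(K_0(A))$, on which $\omegatilde_*$ takes values in $N\omega_*(K_0(A))\subset\Z$, together with the class $x$, which corresponds to $[1]$ and on which $\omegatilde_*$ takes the value $1$. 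With that correction, and granting part (1), your part (2) closes correctly.
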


\noindent We then study the von Neumann algebra $\U''_{\omega,\mu}(A,B)$ generated by $\U(A,B)$ in the GNS construction of $\widetilde{\omega}$. We will denote by $A''$ the von Neumann algebra generated by $A$ in the GNS construction of $\omega$.

\vspace{0.2cm}

\noindent We assume that $\omega$ and $\mu$ are faithful. As before be write $B=\bigoplus_{\kappa=1}^KM_{N_\kappa}(\C)$ and $N=\sum_{k=1}^KN_\kappa$. Then $\mu=\sum_{k=1}^K{\rm Tr}(Q_\kappa\cdot)$, where $Q_\kappa\in M_{N_\kappa}(\C)$ is strictly positive. Let ${\rm Sd}(\mu)\subset\R^*_+$ be the closed multiplicative subgroup of $\R^*_+$ generated by $\{\frac{r}{s}\,:\,r,s\in{\rm Sp}(Q_\kappa),1\leq\kappa\leq K\}$, where ${\rm Sp}(Q_\kappa)$ denotes the spectrum of $Q_\kappa$. We also write, for $A\subset\R^*_+$, $A^\perp:=\{t\in\R\,:\,a^{it}=1\text{ for all }a\in A\}$. Finally, for $A\subset\R^*_+$, we use the notation $\tau(A)$ for the smallest topology on $\R$ making the maps $(t\mapsto a^{it})$ continuous for all $a\in A$ and $\tau(\mu):=\tau({\rm Sd}(\mu))$. We prove the following.

\begin{theoremA}
The following holds.
\begin{enumerate}
\item If $\omega$ is diffuse and $N\geq 2$ then $\U''_{\omega,\mu}(A,B)$ is a non-amenable, full and prime factor of type ${\rm II}_1$ or of type ${\rm III}_\lambda$ with $\lambda\neq 0$ and $$T(\U''_{\omega,\mu}(A,B))=\{t\in\R\,:\,\sigma_t^{\omega''}=\id\}\cap{\rm Sd(\mu)}^\perp.$$
Its Connes' $\tau$-invariant is the smallest topology on $\R$ containing $\tau(\mu)$ and making continuous the map $(t\mapsto\sigma_t^{\omega''})\,:\,\R\rightarrow{\rm Aut}(A'')$.
\item If ${\rm dim}(A)\geq 2$ and either $K\neq 2$ and $N\geq 2$ of $K=2$ and ${\rm dim}(A)+N\geq 5$ then $\U''_{\omega,\mu}(A,B)$ has no Cartan subalgebra.
    \item $A''$ has the Haagerup property if and only if $\U''_{\omega,\mu}(A,B)$ has the Haagerup property.
    \item If $\omega$ and $\mu$ are both traces then $A''$ is Connes embeddable if and only if $\U''_{\omega,\mu}(A,B)$ is Connes embeddable.
\end{enumerate}
\end{theoremA}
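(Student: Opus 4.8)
The plan is to deduce all four assertions from the structure theory of free product von Neumann algebras, after first establishing a concrete model of $\U''_{\omega,\mu}(A,B)$. The crucial initial step I would carry out is to identify $\U''_{\omega,\mu}(A,B)$, in the GNS representation of $\omegatilde$, with a free product von Neumann algebra (amalgamated over a common finite-dimensional subalgebra $D\subseteq B$) assembled from $A''$ and the blocks $M_{N_\kappa}(\C)$, arranged so that the free-product state is exactly $\omegatilde$. Granting this, the modular automorphism group $\sigma_t^{\omegatilde}$ decomposes as the free product of $\sigma_t^{\omega''}$ and the modular flow of $\mu$ (implemented by the unitaries $Q_\kappa^{it}$), and the four statements become applications of the free-probability and deformation/rigidity toolkit.

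For part (1), I would read off factoriality and the type from this modular picture. Since $\omega$ is diffuse and $N\geq 2$, the free product is genuinely non-degenerate with a diffuse free factor, so Ueda's factoriality results for free product von Neumann algebras apply and yield a non-amenable full factor (non-amenability also follows from the presence of a free-group-like subalgebra, fullness from the Ueda/Houdayer--Ueda fullness criteria). The $S$-invariant equals the closed subgroup ${\rm Sd}(\mu)$: being the modular spectrum of a state on a finite-dimensional algebra it is automatically closed, so the factor is of type ${\rm II}_1$ (when ${\rm Sd}(\mu)=\{1\}$) or ${\rm III}_\lambda$ with $\lambda\neq 0$, type ${\rm III}_0$ being excluded precisely by this closedness. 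For the $T$-invariant, $\sigma_t^{\omegatilde}$ is inner iff the two free halves are simultaneously inner; freeness together with diffuseness of $\omega$ forces the $A''$-half to be not merely inner but trivial, i.e. $\sigma_t^{\omega''}=\id$, while innerness of the $\mu$-half is equivalent to $t\in{\rm Sd}(\mu)^\perp$, giving the stated formula. The computation of Connes' $\tau$-invariant follows in the same way: the topology making $t\mapsto\sigma_t^{\omegatilde}$ continuous is the join of the topology $\tau(\mu)$ coming from the $\mu$-flow and the topology making $t\mapsto\sigma_t^{\omega''}$ continuous. Primeness I would obtain from the absence of a tensor splitting, using the s-malleable deformation attached to the free-product structure in the style of Peterson and Ozawa--Popa (or directly from Houdayer--Ueda).

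For parts (2)--(4), I would invoke stability results for this construction. The absence of a Cartan subalgebra follows from the structural theorems of Houdayer--Ueda (and Ioana) for free product von Neumann algebras, which apply once the free product is non-degenerate; the hypotheses ``$K\neq 2$ and $N\geq 2$'' or ``$K=2$ and ${\rm dim}(A)+N\geq 5$'' are exactly what excludes the degenerate low-dimensional coincidences (e.g. those producing an amplification of a single block, such as the $\C^2\ast\C^2$ phenomena) in which a Cartan subalgebra does exist. For part (3), the Haagerup property transfers both ways: the ``only if'' direction uses that $A''$ is the range of an $\omega$-preserving normal conditional expectation, along which the Haagerup property is inherited, and the ``if'' direction uses Boca-type stability of the Haagerup property under (amalgamated) free products together with the trivial Haagerup property of the finite-dimensional blocks. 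For part (4), when $\omega$ and $\mu$ are traces the whole algebra is tracial; Connes embeddability passes to $A''$ since it is a von Neumann subalgebra, and back to $\U''_{\omega,\mu}(A,B)$ because a tracial free product of Connes-embeddable algebras is Connes-embeddable (free copies being realizable inside $R^\omega$).

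The main obstacle is the first step: pinning down the precise (amalgamated) free-product model and checking that the free-product state coincides with $\omegatilde$, so that the modular flow decomposes as claimed. Once that is secured, the remaining delicate points are the exact type computation---in particular guaranteeing that type ${\rm III}_0$ never occurs, which rests on the closedness of ${\rm Sd}(\mu)$---and the careful bookkeeping of the degenerate cases in the no-Cartan statement; the other invariants then follow formally from the free-product toolkit.
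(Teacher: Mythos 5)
There is a genuine gap, and it sits exactly where you flagged ``the main obstacle'': the structural model you propose is not merely unproven, it is false for multi-block $B$. Take $B=\C^2$ with $\mu=(\tfrac12,\tfrac12)$. Then $\U(A,\C^2)=A*A$ with $\rho(a)=(\nu_1(a),\nu_2(a))$, and $\U_{\omega,\mu}(A,\C^2)''=A''*A''$, a free product of \emph{two} copies of $A''$; no amalgamated free product over a subalgebra $D\subseteq B$ assembled from one copy of $A''$ and the blocks of $B$ can produce this. Concretely, $\rho(A)$ and $B\ot 1$ are \emph{not} free inside $(B\ot\U_{\omega,\mu}(A,B),\mu\ot\omegatilde)$: for $a\in\ker\omega$, $a\neq 0$, and $b=(1,-1)\in\ker\mu$, the alternating word $\rho(a)(b\ot 1)\rho(a^*)(b\ot 1)$ equals $(\nu_1(aa^*),\nu_2(aa^*))$, whose $(\mu\ot\omegatilde)$-value is $\omega(aa^*)>0$. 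Hence the modular group $\sigma_t^{\widetilde{\omega}}$ does not decompose as a free product of $\sigma_t^{\omega''}$ with the modular flow of $\mu$, and everything you derive from that picture is unfounded. The correct structure (which the paper establishes) is two-layered: $\U_{\omega,\mu}(A,B)''$ is the \emph{plain} free product over $\kappa$ of the one-block algebras $\U_{\omega,\mu_\kappa}(A)''$ with respect to the states $\omegatilde_\kappa$, and each block algebra is identified with $A''*M_{N_\kappa}(\C)$ only \emph{after} amplification, via $M_{N_\kappa}(\C)\ot\U_{\omega,\mu_\kappa}(A)''\simeq A''*M_{N_\kappa}(\C)$. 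This forces an intermediate lemma transporting modular data through the amplification, namely $\{t\in\R\,:\,\sigma_t^{\omegatilde_\kappa}=\id\}=\{t\in\R\,:\,\sigma_t^{\omega''}=\id\}\cap{\rm Sd}(Q_\kappa)^\perp$ and its topological analogue, before Ueda's theorems can be applied to the free product of the (diffuse) block algebras.

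The toolkit you invoke afterwards is the right one and is the paper's (Ueda for factoriality, fullness, type and the $T$- and $\tau$-invariants; Boutonnet--Houdayer--Raum for absence of Cartan, with the dimension hypotheses playing exactly the role you describe; free-product stability of the Haagerup property; Brown--Dykema--Jung for Connes embeddability), so with the correct decomposition in hand the rest of your outline goes through in essentially the paper's way. But beware that several of your specific assertions inherit the modelling error: the $S$-invariant is not ${\rm Sd}(\mu)$ in general (the type also sees $\sigma^{\omega''}$, and is ${\rm II}_1$ only when both $\omega''$ and $\mu$ are traces); type ${\rm III}_0$ is not excluded ``by closedness of ${\rm Sd}(\mu)$'' (every type ${\rm III}$ factor has $S(M)\cap\R^*_+$ closed; ${\rm III}_0$ is excluded because full factors are never of type ${\rm III}_0$, which is part of Ueda's conclusion); and $A''$ is \emph{not} a von Neumann subalgebra of $\U_{\omega,\mu}(A,B)''$ --- it only embeds, with expectation, into $M_{N_\kappa}(\C)\ot\U_{\omega,\mu_\kappa}(A)''$ --- so your expectation/subalgebra arguments in parts (3) and (4) must be routed through the matrix amplification and the blockwise free product, not applied directly.
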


\noindent In the fourth section, we study in full generality the notion of universal quantum homomorphism. We first show the following.

\begin{theoremA}
Let $A$ and $B$ be unital C*-algebras. Suppose that $A$ is separable with ${\rm dim}(A)\geq 2$. Then, the pair $(A,B)$ is matched if and only if $B$ is finite dimensional.
\end{theoremA}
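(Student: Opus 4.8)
The plan is to prove the two directions separately. The implication ``$B$ finite dimensional $\Rightarrow$ $(A,B)$ matched'' is exactly the observation recalled in the introduction (going back to Brown and Phillips), so I would only need to cite it. The content is the converse, which I would establish in contrapositive form: if $B$ is infinite dimensional and $\mathrm{dim}(A)\ge 2$, then $(A,B)$ is not matched. The guiding principle is a gluing obstruction. Assume for contradiction that a universal quantum homomorphism $\rho\,:\,A\to B\ot\U$ exists, with $\U=\U(A,B)$. Given any sequence of unital $*$-homomorphisms $\pi_n\,:\,A\to B\ot C$ into a \emph{fixed} unital C*-algebra $C$, the induced maps $\widetilde{\pi_n}\,:\,\U\to C$ assemble into one unital $*$-homomorphism $\Phi=(\widetilde{\pi_n})_n\,:\,\U\to\prod_nC=\ell^\infty(\N,C)$ (each $\widetilde{\pi_n}$ is contractive), and then $\pi:=(\id\ot\Phi)\rho$ satisfies $(\id\ot p_n)\pi=\pi_n$ for the coordinate projections $p_n$. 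Thus the existence of $\U$ forces every such family to \emph{glue}, the glued map necessarily landing in the minimal tensor product $B\ot\ell^\infty(\N,C)$, viewed inside $\prod_n(B\ot C)$. It therefore suffices to exhibit a single family $(\pi_n)$ that provably cannot glue, exploiting that the canonical inclusion $B\ot\ell^\infty(\N,C)\hookrightarrow\prod_n(B\ot C)$ fails to be surjective once $B$ is infinite dimensional.

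I would first record the two structural inputs. Since $\mathrm{dim}(A)\ge 2$, there are a unital C*-algebra $D$ and two \emph{distinct} unital $*$-homomorphisms $\alpha\neq\beta\,:\,A\to D$: if $A$ is commutative, take $D=\C$ and two distinct characters; if $A$ is noncommutative, take $D=A$, $\alpha=\id$ and $\beta=\mathrm{Ad}(u)$ for a non-central unitary $u$ (which exists, since otherwise the unitaries, hence $A$, would be central). Fix $a_0\in A$ with $\delta:=\|\alpha(a_0)-\beta(a_0)\|>0$. Since $B$ is infinite dimensional, it contains a self-adjoint element with infinite spectrum, and hence, via orthogonal bump functions in its continuous functional calculus, a sequence $(q_n)_{n\ge1}$ of positive, norm-one, pairwise orthogonal elements; in particular $\|q_n-q_m\|=1$ for $n\neq m$.

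Next comes the construction of the non-gluing family, which is the step that must accommodate a possibly \emph{projectionless} $B$. From each $q_n$ I would build the projection $P_n=\begin{pmatrix}q_n&r_n\\r_n&1-q_n\end{pmatrix}\in M_2(B)=B\ot M_2$, where $r_n=\sqrt{q_n-q_n^2}$; a direct computation gives $P_n=P_n^*=P_n^2$. Setting $C:=M_2\ot D$ and $\widetilde{\alpha},\widetilde{\beta}\,:\,A\to B\ot C$ by $\widetilde{\alpha}(a)=1\ot1\ot\alpha(a)$ and $\widetilde{\beta}(a)=1\ot1\ot\beta(a)$, I define
\[
\pi_n(a):=P_n\widetilde{\alpha}(a)+(1-P_n)\widetilde{\beta}(a).
\]
Because $P_n\in B\ot M_2\ot\C1$ commutes with the ranges of $\widetilde{\alpha}$ and $\widetilde{\beta}$, and because $P_n(1-P_n)=0$, the cross terms cancel and each $\pi_n$ is a genuine unital $*$-homomorphism. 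Its value on $a_0$ is $\pi_n(a_0)=\widetilde{\beta}(a_0)+P_n c$ with $c:=\widetilde{\alpha}(a_0)-\widetilde{\beta}(a_0)$, so the $n$-dependence is concentrated in the single ``leading'' term $P_nc$.

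Finally I would invoke the tensor obstruction. The key lemma is elementary: if $(x_n)_n$ lies in $B\ot\ell^\infty(\N,C)$, then for every $\varepsilon>0$ there is a \emph{fixed} finite-dimensional subspace $V\subseteq B$ with $\sup_n\mathrm{dist}(x_n,V\ot C)<\varepsilon$ (approximate by an algebraic tensor and let $V$ be the span of its $B$-legs). The constant summand $\widetilde{\beta}(a_0)$ is harmless, so it is enough to defeat this for $(P_nc)_n$. Choosing a norming functional $\psi\in D^*$ for $\alpha(a_0)-\beta(a_0)$ and slicing by the contraction $S:=\id_B\ot(\mathrm{ev}_{11}\ot\psi)\,:\,B\ot C\to B$, which sends $V\ot C$ into $V$, one computes $S(P_nc)=\delta\,q_n$; hence $\mathrm{dist}(P_nc,V\ot C)\ge\delta\,\mathrm{dist}(q_n,V)$ for every finite-dimensional $V$. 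Since the $q_n$ are $1$-separated, only finitely many of them can lie within $1/3$ of any fixed finite-dimensional $V$ (a bounded sequence in $V$ would subconverge, forcing two $q_n$ closer than $1$), so $\sup_n\mathrm{dist}(q_n,V)\ge 1/3$ and thus $\sup_n\mathrm{dist}(\pi_n(a_0),V\ot C)\ge\delta/3$ for \emph{every} $V$. This contradicts the lemma, so $(\pi_n(a_0))_n\notin B\ot\ell^\infty(\N,C)$, the family does not glue, and $\U(A,B)$ cannot exist. I expect the main obstacle to be exactly this projectionless case: the $M_2$-dilation $q_n\mapsto P_n$ manufactures the orthogonal-projection structure needed for the ``switching'' $*$-homomorphism inside $B\ot M_2$, while keeping the spread-out behaviour of the $q_n$ detectable by a single slice map. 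Note, finally, that separability of $A$ is not actually used in this direction.
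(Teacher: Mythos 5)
Your proof is correct, and it reaches the conclusion by a genuinely different route than the paper's proof of Theorem \ref{TheoremMatched}. The easy direction is handled the same way (the explicit construction of Proposition \ref{PropMNcase}). For the hard direction, the paper's obstruction is continuity-plus-invertibility rather than gluing: it fixes an infinite-dimensional abelian subalgebra $D\subseteq B$, passes through the bidual $B^{**}$ and a normal conditional expectation onto $D^{**}$ to get a map $\Psi\,:\,D^*\rightarrow B^*$, and proves via the Banach-space Lemma \ref{LemmaDual} that $\chi\mapsto(\Psi(\chi)\ot\id)\rho(a_0)$ is weak*-to-norm continuous on ${\rm Sp}(D)$; since ${\rm Sp}(D)$ is infinite and compact, two distinct characters $\chi_0,\chi_1$ can be chosen whose slices are $\epsilon$-close, so that $x:=(\Psi(\chi_1)\ot\id)\rho(a_0)-(\Psi(\chi_0)\ot\id)\rho(a_0)+1$ is invertible in $\U$, while a quantum homomorphism $A\rightarrow D\ot\Lcal(H)\subseteq B\ot\Lcal(H)$ built from Urysohn's lemma and a norm-continuous path of unitaries (Lemma \ref{LemmaSpecificElement}) sends $x$ to the non-invertible element $u_1\pi_0(a_0)u_1^*-\pi_0(a_0)+1$, a contradiction. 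Both proofs start from the same seed (\cite{KRbook}, 4.6.12: an infinite-dimensional C*-algebra contains a self-adjoint element with infinite spectrum), and both pit a uniform ``tameness'' of the coefficients of $\rho$ in the $B$-variable, forced by universality, against an adversarial quantum homomorphism; but your implementation of both halves is different and, on the whole, more elementary. Your tameness statement is finite-dimensional approximability in the $B$-leg of elements of $B\ot\ell^\infty(\N,C)$ after $\ell^\infty$-gluing (no biduals, no conditional expectations, no Lemma \ref{LemmaDual}, no Urysohn), and your adversary is manufactured from two distinct unital $*$-homomorphisms out of $A$ — a triviality given ${\rm dim}(A)\geq 2$ — together with the $M_2$-dilation of the $1$-separated orthogonal positive elements $q_n\in B$ into honest projections $P_n$, which disposes of possible projectionlessness in one stroke; by contrast, the paper's delicate step (Lemma \ref{LemmaSpecificElement}) sits on the $A$-side, requires a case analysis according to whether $A$ has a non-trivial projection, and is exactly where separability of $A$ is invoked. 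What the paper's route buys is an obstruction visible already in a single quantum homomorphism with commutative first leg $C({\rm Sp}(D))$, together with a concrete invertibility statement inside $\U$ itself; what yours buys is economy of means, a cleanly quotable gluing principle (universality forces countable families over a fixed $C$ to glue into $B\ot\ell^\infty(\N,C)$), and a lighter use of the hypotheses: separability of $A$ enters only to keep the test algebra $C=M_2\ot D$ separable, in keeping with the paper's standing conventions, and the structure of $A$ enters only through the existence of two distinct morphisms into a common target.
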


\noindent Hence, the category of unital C*-algebras is too small to consider a universal quantum homomorphism from $A$ to $B$ with infinite dimensional $B$. Actually, the natural category to consider is the category of unital locally C*-algebras. Recall that a unital locally C*-algebra is a complete Hausdorff unital $*$-algebra whose topology is defined by a family of C*-seminorms. We show that the universal quantum homomorphism always exists in this setting.

\begin{theoremA}\label{ThmUAB}
Let $A$ and $B$ be two unital locally C*-algebras. There exists a unique (up to a canonical continuous $*$-isomorphism) unital locally separable locally C*-algebra $\U(A,B)$ with a continuous unital $*$-homomorphism $\rho\,:\, A\rightarrow B\ot \U(A,B)$ such that for any unital locally separable locally C*-algebra $\U$, any continuous unital $*$-homomorphism $\pi\,:\,A\rightarrow B\ot\U$ there exists a unique continuous unital $*$-homomorphism $\widetilde{\pi}\,:\,\U(A,B)\rightarrow\U$ such that $(\id\ot\widetilde{\pi})\rho=\pi$.
\end{theoremA}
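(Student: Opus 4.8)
The plan is to prove uniqueness formally from the universal property and existence by an explicit product construction, the only delicate point being that the object produced, and the objects it is tested against, are required to be locally separable. \emph{Uniqueness} is the usual Yoneda argument: if $(\U_1,\rho_1)$ and $(\U_2,\rho_2)$ both have the stated property, then, since each $\U_i$ is itself a unital locally separable locally C*-algebra and hence an admissible test object, I would feed $\rho_2$ into the universal property of $\U_1$ and $\rho_1$ into that of $\U_2$ to obtain continuous unital $*$-homomorphisms $\alpha\,:\,\U_1\to\U_2$ and $\beta\,:\,\U_2\to\U_1$ with $(\id\ot\alpha)\rho_1=\rho_2$ and $(\id\ot\beta)\rho_2=\rho_1$. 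Then $\beta\alpha$ and $\id$ both intertwine $\rho_1$ with itself, so by the uniqueness clause (applied with $\pi=\rho_1$) they coincide; similarly $\alpha\beta=\id$, yielding the asserted canonical continuous $*$-isomorphism.

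\emph{Existence.} Since we only ever test against locally separable algebras, I would build $\U(A,B)$ out of separable C*-targets. Let $\mathcal{S}$ be a set of representatives of the isomorphism classes of pairs $(C,\pi)$, where $C$ is a separable unital C*-algebra and $\pi\,:\,A\to B\ot C$ is a continuous unital $*$-homomorphism; this is a set because separable unital C*-algebras form a set up to isomorphism and, for each, the continuous unital $*$-homomorphisms from $A$ form a set. Form the product locally C*-algebra $E:=\prod_{(C,\pi)\in\mathcal{S}}C$, equipped with the seminorms $\max_{i\in F}\|\cdot\|_{C_i}$ over finite subsets $F$; its quotients are finite direct sums of separable C*-algebras, so $E$ is locally separable. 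The diagonal map $\rho:=(\pi)_{(C,\pi)}\,:\,A\to\prod(B\ot C)=B\ot E$ is a continuous unital $*$-homomorphism, using that $B\ot(-)$ commutes with the inverse limit defining the product. I would then define $\U(A,B)$ to be the smallest closed unital locally-C*-subalgebra of $E$ through which $\rho$ corestricts, that is, the closed subalgebra generated by the coefficients $(\omega\ot\id)\rho(a)$ with $a\in A$ and $\omega\in B^*$. Being a closed subalgebra of $E$ it is locally separable, and by construction $\rho$ is a continuous unital $*$-homomorphism $A\to B\ot\U(A,B)$ whose coefficients are total in $\U(A,B)$.

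\emph{Universal property.} Given a unital locally separable locally C*-algebra $\U=\varprojlim_r U_r$ (an inverse limit of separable unital C*-algebras) and a continuous unital $*$-homomorphism $\pi\,:\,A\to B\ot\U$, composition with the canonical projections gives $\pi_r\,:\,A\to B\ot U_r$, so each $(U_r,\pi_r)$ is, up to isomorphism, a member of $\mathcal{S}$. Restricting the corresponding coordinate projection $E\to U_r$ to $\U(A,B)$ produces a continuous unital $*$-homomorphism $\widetilde{\pi}_r\,:\,\U(A,B)\to U_r$ with $(\id\ot\widetilde{\pi}_r)\rho=\pi_r$. A short check on the generating coefficients shows the $\widetilde{\pi}_r$ are compatible with the connecting maps of the inverse system, so they assemble into a continuous unital $*$-homomorphism $\widetilde{\pi}\,:\,\U(A,B)\to\U$ satisfying $(\id\ot\widetilde{\pi})\rho=\pi$; uniqueness of $\widetilde{\pi}$ is immediate because the coefficients generate $\U(A,B)$.

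\emph{Main obstacle.} The formal and combinatorial steps above are routine; the genuine work lies in the behaviour of the tensor product in the locally C*-setting. I expect the main obstacle to be establishing the two identities on which the construction rests: that $B\ot(-)$ commutes with the inverse limits defining $E$ (so that $\rho$ is a well-defined continuous $*$-homomorphism into $B\ot E$), and that $\rho$ actually corestricts to the coefficient-generated subalgebra, i.e. that $\rho(a)$ lies in $B\ot\U(A,B)$ for every $a$. The latter requires controlling $\rho(a)$ by its slices $(\omega\ot\id)\rho(a)$ through the completed tensor product, which is the analytically delicate part precisely when $B$ is infinite dimensional. The accompanying set-theoretic point, that $\mathcal{S}$ is a set and that $E$ and its subalgebras remain locally separable, is exactly what the restriction to locally separable test objects guarantees.
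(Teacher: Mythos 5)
Your proposal is correct in outline but takes a genuinely different route from the paper's proof (which is Theorem \ref{ThmUniversalPro} in the body). The paper does not form a product of all targets: it first makes the collection of quantum homomorphisms into a set by insisting that every separable C*-target be a quotient $C^*(\F_\infty)/I$, replaces each target $C$ by the coefficient C*-algebra $A_\pi\subseteq C$ generated by the slices $(\omega\ot\id)\pi(a)$, puts the domination preorder $\pi\leq\rho$ on these quantum homomorphisms, proves that the equivalence classes form a directed poset (upper bounds via direct sums), and defines $\Ucal(A,B)$ as the projective limit of the $A_\pi$ along the canonical connecting surjections $\psi_{\pi,\rho}$; the universal morphism $\rho$ is then produced by the universal property of the projective limit, after identifying $B\ot\Ucal(A,B)$ with the projective limit of the $B\ot A_\pi$ via Proposition \ref{PropAMTensor} --- which applies because the projections $\psi_\pi\,:\,\Ucal(A,B)\rightarrow A_\pi$ are surjective, and this surjectivity is automatic precisely because the building blocks are the coefficient algebras $A_\pi$ rather than the full targets. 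Your construction (one big product of all targets, then cut down to the coefficient-generated closed subalgebra) buys simplicity: no order relation, no antisymmetry or directedness verification, and the surjectivity hypothesis needed for the tensor--limit commutation holds trivially for the finite-coordinate projections of a product. What the paper's choice buys is that its universal morphism lands in $B\ot\Ucal(A,B)$ by the abstract property of projective limits, rather than by a corestriction argument inside an ambient locally C*-algebra.

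That said, the two ``main obstacles'' you flag are exactly the technical core, and a complete write-up must prove them; in the paper they correspond to the material of Section \ref{SectionLocally} together with a claim made in the definition of $\Qhom(A,B)$. The commutation of $B\ot(-)$ with projective limits is Proposition \ref{PropAMTensor} (proved there under the surjectivity hypothesis, which your product satisfies), and even defining your generators $(\omega\ot\id)\rho(a)$ as elements of $E$ and checking their compatibility under morphisms requires Proposition \ref{CorTensor}. Your second obstacle, that $\rho$ corestricts to $B\ot\Ucal(A,B)$, reduces --- using the finite-coordinate projections, Corollary \ref{CorClosed} (continuous images of locally C*-algebras are closed), and Proposition \ref{PropAMTensor} once more --- to the statement that each individual $\pi\,:\,A\rightarrow B\ot C$ with C*-target maps $A$ into $B\ot A_\pi$; this is precisely the slice-type fact the paper builds into its definition of $\Qhom(A,B)$ (asserted there as ``easily checked''), so your approach is no worse off than the paper's on this point, but you should spell out this reduction, since in your setup the corestriction is demanded at the level of the huge locally C*-algebra $E$ rather than coordinatewise. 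With these two facts supplied, your assembly of $\widetilde{\pi}$ from the Arens--Michael decomposition of $\Ucal$ and the uniqueness arguments (density of the coefficients; the standard Yoneda argument) match the paper's and are fine.
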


\noindent Let $\Ccal$ be the category of unital locally separable locally C*-algebras with morhisms being the continuous unital $*$-homomorphisms. Using Theorem \ref{ThmUAB} we prove in Theorem \ref{ThmAdjoint} that the tensor product functor $\Ccal\rightarrow\Ccal$, $C\mapsto B\ot C$ admits a left adjoint, for all $B\in\Ccal$, given by $A\mapsto \Ucal(A,B)$.

\vspace{0.2cm}

\noindent We also study the natural structure of quantum semigroup on the locally C*-algebra $\Ucal(A,A)$. When $A$ is finite dimensional, we show that any invertible state on $\Ucal(A,A)$ (for the convolution product) is a character and the quantum semigroup $\Ucal(A,A)$ is actually a compact quantum group if and only if $A=\C$.

\begin{ack}
The authors gratefully acknowledge funding from ReLaX, CNRS IRL 2000 and support and hospitality of ISI, Delhi, IMJ-PRG and IIT Madras. M.M. thanks the Chennai Mathematical Institute for partial support. M.M. is also supported by The Institute of Mathematical Sciences Postdoctoral fellowship. I.P. is supported by SERB (SRG/2022/001717).
\end{ack}

\subsection{Notations} All C*-algebras and Hilbert spaces throughout this paper are assumed to be separable. The same symbol $\ot$ is used for the minimal tensor product of (locally) C*-algebras as well as the tensor product of Hilbert spaces. We denote by ${\rm Sp}(A)$ the spectrum of a unital C*-algebra $A$ and by ${\rm Sp}_\A(a)$ the spectrum of an element $a$ in a unital algebra $\A$.

\section{Universal quantum homomorphisms : Matched Pairs}

\noindent In this section we introduce and study universal quantum homomorphisms, for the specific case of matched pairs of C*-algebras. The more general case of universal quantum homomorphisms associated to arbitrary pairs of C*-algebras will be studied in Section \ref{Section:general case}.

\begin{definition}
Let $A$, $B$ be unital C*-algebras.
\begin{itemize}
\item A \textit{quantum homomorphism} from $A$ to $B$ is a unital $*$-homomorphism $\pi\,:\,A\rightarrow B\ot C$, where $C$ is some unital C*-algebra, called the C*-algebra of $\pi$.
\item A pair of C*-algebras $(A,B)$ is said to be a \textit{matched pair} if there exists a quantum homomorphism from $A$ to $B$, $\rho\,:\,A\rightarrow B\ot \U$, which has the property that for every quantum homomorphism $\pi\,:\,A\rightarrow B\ot C$, there exists a unique unital $*$-homomorphism $\widetilde{\pi}\,:\,\U\rightarrow C$ such that $(\id\ot\widetilde{\pi})\rho=\pi$. Such a quantum homomorphism is said be a \textit{universal quantum homomorphism} associated to the matched pair $(A,B)$.
\end{itemize}
\end{definition}

\begin{proposition}
Let $(A,B)$ be a matched pair and $\rho\,:\,A\rightarrow B\ot \U$ a universal quantum homomorphism. The following holds.
\begin{enumerate}
\item $\rho$ is faithful and the unital $*$-algebra generated by $\{(\omega\ot\id)(\rho(a))\,:\,\omega\in B^*,\,a\in A\}$ is dense in $\U$.
\item Any universal quantum homomorphism $\pi\,:\,A\rightarrow B\ot C$ is isomorphic to $\rho$ in the sense that there exists an isomorphism $\eta\,:\,\U\rightarrow C$ such that $(\id\ot\eta)\rho=\pi$.
\end{enumerate}
\end{proposition}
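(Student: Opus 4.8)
The plan is to handle the two assertions of part (1) independently and then obtain part (2) from the standard uniqueness argument for objects defined by a universal property.

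For faithfulness, I would produce a single faithful quantum homomorphism and pull the property back through $\rho$. Fix a state $\psi$ on $B$ and consider the trivial quantum homomorphism $\pi_0\,:\,A\rightarrow B\ot A$, $\pi_0(a)=1_B\ot a$; it is unital and $*$-preserving, and $(\psi\ot\id)\pi_0=\id_A$ (as $\psi(1_B)=1$), so $\pi_0$ is injective. The universal property yields $\widetilde{\pi_0}\,:\,\U\rightarrow A$ with $(\id\ot\widetilde{\pi_0})\rho=\pi_0$; since the injective map $\pi_0$ factors through $\rho$, the map $\rho$ is injective as well. This step is routine.

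For density, let $\U_0\subseteq\U$ be the closed unital $*$-subalgebra generated by the coefficients $\{(\omega\ot\id)(\rho(a))\,:\,\omega\in B^*,\,a\in A\}$, and let $\iota\,:\,\U_0\hookrightarrow\U$ be the inclusion, with induced embedding $\id_B\ot\iota\,:\,B\ot\U_0\rightarrow B\ot\U$ (injective, by functoriality of the minimal tensor product). The decisive point is that $\rho$ corestricts to a quantum homomorphism $\rho_0\,:\,A\rightarrow B\ot\U_0$ with $(\id\ot\iota)\rho_0=\rho$. Granting this, apply the universal property of $\rho$ to $\rho_0$ to get $\psi\,:\,\U\rightarrow\U_0$ with $(\id\ot\psi)\rho=\rho_0$; then $(\id\ot(\iota\psi))\rho=(\id\ot\iota)\rho_0=\rho=(\id\ot\id_\U)\rho$, and the uniqueness clause of the universal property (applied to the quantum homomorphism $\rho$ itself, for which $\id_\U$ works) forces $\iota\psi=\id_\U$. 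Hence $\iota$ is surjective and $\U_0=\U$. The \emph{main obstacle} is exactly the membership $\rho(a)\in(\id_B\ot\iota)(B\ot\U_0)$. When $B$ is finite dimensional --- the only case giving a nontrivial matched pair, as shown later in the paper --- this is immediate: writing $B=\bigoplus_\kappa M_{N_\kappa}(\C)$ with matrix units $e^\kappa_{ij}$, one has $\rho(a)=\sum_{\kappa,i,j}e^\kappa_{ij}\ot(\phi^\kappa_{ji}\ot\id)(\rho(a))$ with $\phi^\kappa_{ji}\in B^*$ the dual matrix-unit functionals, a finite sum of simple tensors whose right legs are coefficients, so $\rho(a)\in B\ot_{\rm alg}\U_0$. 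In general one invokes the slice-map property of the minimal tensor product (available since $B$ is exact), which characterises $B\ot\U_0$ inside $B\ot\U$ as $\{x\,:\,(\omega\ot\id)(x)\in\U_0\text{ for all }\omega\in B^*\}$, a set containing every $\rho(a)$ by the definition of $\U_0$.

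Finally, for part (2), both $\rho$ and $\pi$ are universal. Universality of $\rho$ gives $\widetilde{\pi}\,:\,\U\rightarrow C$ with $(\id\ot\widetilde{\pi})\rho=\pi$, and universality of $\pi$ gives $\widetilde{\rho}\,:\,C\rightarrow\U$ with $(\id\ot\widetilde{\rho})\pi=\rho$. Then $(\id\ot\widetilde{\rho}\widetilde{\pi})\rho=(\id\ot\widetilde{\rho})\pi=\rho=(\id\ot\id_\U)\rho$, so uniqueness forces $\widetilde{\rho}\widetilde{\pi}=\id_\U$; symmetrically $\widetilde{\pi}\widetilde{\rho}=\id_C$. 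Thus $\eta:=\widetilde{\pi}$ is the desired isomorphism with $(\id\ot\eta)\rho=\pi$. I expect faithfulness and part (2) to be essentially formal, with all the genuine content concentrated in the tensor-product membership step underlying density.
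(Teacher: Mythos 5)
Your proof follows the same route as the paper's: faithfulness by factoring the trivial quantum homomorphism $a\mapsto 1_B\ot a$ through $\rho$, density by corestricting $\rho$ to $B\ot\U_0$ and using the uniqueness clause to force $\iota\psi=\id_\U$, and part (2) by the standard two-way universality argument. The one place where you add genuine content is the membership $\rho(A)\subseteq B\ot\U_0$: the paper simply defines the corestricted map $\pi$ with $(\id\ot\iota)\pi=\rho$ as if this were automatic, whereas it is precisely the Fubini/slice-map subtlety you point out, and your matrix-unit computation for finite-dimensional $B$ is the correct way to settle it in that case. However, your fallback for the general case is not justified as stated: nothing in the hypotheses makes $B$ exact, and matched pairs with $A=\C$ exist for \emph{every} unital $B$, including non-exact ones, so the parenthetical ``available since $B$ is exact'' has no basis. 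The gap is harmless once you make the case distinction explicit: for a matched pair, either ${\rm dim}(A)\geq 2$, in which case $B$ is finite dimensional by Theorem \ref{TheoremMatched} (whose proof does not use this proposition, so the forward reference is not circular, although it does make an elementary statement depend on a much harder one), or $A=\C$, in which case $\rho(z)=z(1_B\ot 1)$ lies in $B\ot\C 1\subseteq B\ot\U_0$ trivially and the rest of your argument then shows $\U=\C$. With that adjustment your proof is complete, and it is arguably more careful than the paper's own, which passes over the corestriction step in silence.
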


\begin{proof}
$(1)$. Consider the unital $*$-homomorphism $A\rightarrow B\ot A$, $a\mapsto 1_B\ot a$. By the universal property, there exists a unital  $*$-homomorphism $\pi\,:\,\U\rightarrow A$ such that $(\id\ot\pi)\rho(a)=1_B\ot a$ for all $a\in A$. This shows that $\rho$ is faithful. 

Consider now $C\subseteq \U$, defined as the unital C*-algebra generated by $\{(\omega\ot\id)(\rho(a))\,:\,\omega\in B^*,\,a\in A\}$ and define $\pi\,:\,A\rightarrow B\ot C$ so that $(\id\ot\iota)\circ\pi=\rho$, where $\iota$ is the inclusion from $C$ to $\U$. By the universal property, there exists a unital $*$-homomorphism $\psi\,:\,\U\rightarrow C$ such that $(\id\ot\psi)\rho=\pi$. Hence the unital $*$ homomorphism $\iota\circ\psi\,:\,\U\rightarrow \U$ satisfies $(\id\ot\iota\circ\psi)\rho=\rho$. By uniqueness, we deduce that $\iota\circ\psi=\id_\U$. It follows that $\iota$ is surjective.

\vspace{0.2cm}

\noindent$(2)$. The existence of the homomorphism $\eta$ comes from the universal property of $\rho$ and the existence of its inverse comes from the universal property of $\pi$ and uniqueness.
\end{proof}

\noindent When the pair $(A,B)$ is matched, the unique universal quantum homomorphism from $A$ to $B$ will be denoted by $\rho_{A,B}$ and its C*-algebra will be denoted by $\U(A,B)$.

\begin{example}
The following is a brief list of examples of matched pairs and their corresponding C*-algebras.
\begin{enumerate}
\item It is obvious that for any unital C*-algebra $B$, $\U(\C,B)=\C$ and $\rho_{\C,B}(z)=z1_B$, $z\in\C$.
\item If $N\in\N^*$ and $A$ is any unital C*-algebra then $\U(A,\C^N)=A^{*N}$ and:
$$\rho_{A,\C^N}\,:\,A\rightarrow\C^N\ot A^{*N},\quad\rho_{A,\C^N}(a):=\sum_{i=1}^Ne_i\ot\nu_i(a),$$
where $\nu_i\,:\,A\rightarrow A^{*N}$ be the i$^{th}$-copy of $A$ in the free product and $(e_i)_i$ is the canonical orthonormal basis (onb) of $\C^N$.
\item $G_N^{nc}:=\U(\C^2,M_N(\C))$ is the non-commutative grassmanian \cite{Br81} i.e. the universal unital C*-algebra $G_N^{nc}$ generated by $u_{ij}$, $1\leq i,j\leq N$ such that $u=(u_{ij})\in M_N(\C)\ot G_N^{nc}$ is an orthogonal projection and $\rho_{\C^2,M_N(\C)}$ is the unique unital $*$-homomorphism from $\C^2$ to $M_N(\C)\ot G_N^{nc}$ mapping $e_1$ to $u$, where $(e_1,e_2)$ is the canonical onb of $\C^2$.
\item $U^{nc}_N:=\U(C^*(\Z),M_N(\C))$ is the non-commutative unitary C*-algebra \cite{Br81} i.e. the universal unital C*-algebra generated by elements $u_{ij}$, $1\leq i,j\leq N$, such that $u:=\sum_{i,j}e_{ij}\ot u_{ij}\in M_N(\C)\ot U^{nc}_N$ is unitary and $\rho_{\C^*(\Z),M_N(\C)}$ is the unique unital $*$-homomorphism from $\C^*(\Z)$ to $M_N(\C)\ot U_N^{nc}$ mapping $g$ to $u$, where $\Z=\langle g\rangle$.
\end{enumerate}
\end{example}

\noindent In fact, it is follows from the work of Phillips \cite{Ph88} (see also \cite{So08}) that the pair $(A,B)$ is matched for any unital C*-algebra $A$ and any finite dimensional C*-algebra algebra $B$. We present below a more direct and explicit approach to show this result. The next lemma allows us to reduce to the case $B=M_N(\C)$.

\begin{lemma}\label{RmkFree}
The following hold
\begin{enumerate}
    \item Suppose, for $k=1,2$, $(A,B_k)$ is matched with universal quantum homomorphism $\rho_k\,:\,A\rightarrow B_k\ot \U_k$. View $\U_k\subset \U:=\U_1*\U_2$ and define $B=B_1\oplus B_2$ with canonical projections $\chi_k\,:\,B\rightarrow B_k$. Then, the unique unital $*$-homomorphism $\rho\,:\,A\rightarrow B\ot\U$ such that $(\chi_k\ot\id)\rho=\rho_k$, $k=1,2$, is the universal quantum homomorphism from $A$ to $B$.
    \item Suppose, for $k=1,2$, $(A_k,B)$ is matched with universal quantum homomorphism $\rho_k\,:\,A_k\rightarrow B\ot \U_k$. View $\U_k\subset \U:=\U_1*\U_2$. Then, the unique unital $*$-homomorphism $\rho\,:\,A_1*A_2\rightarrow B\ot\U$ such that $\rho\vert_{A_k}=\rho_k$, $k=1,2$, is the universal quantum homomorphism from $A_1*A_2$ to $B$.
\end{enumerate}
\end{lemma}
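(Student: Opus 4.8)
The plan is to prove both statements directly from universal properties, reducing everything to the defining property of the matched pairs $(A,B_k)$ (resp. $(A_k,B)$) and to the coproduct property of the unital free product of C*-algebras. Both claims are formal diagram chases, essentially mirror images of one another, the only subtlety being to keep track of the inclusions $\U_k\subset\U$ and to use that operations on disjoint tensor legs commute, i.e. $(\chi_k\ot\id)(\id\ot\widetilde{\pi})=(\id\ot\widetilde{\pi})(\chi_k\ot\id)$ as maps out of $B\ot\U$.

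First I would settle the existence and uniqueness of $\rho$ in each case. In (1) I would use the canonical identification $B\ot\U=(B_1\ot\U)\oplus(B_2\ot\U)$, under which a unital $*$-homomorphism $A\rightarrow B\ot\U$ is exactly a pair of unital $*$-homomorphisms $A\rightarrow B_k\ot\U$; prescribing $(\chi_k\ot\id)\rho=\rho_k$, with $\rho_k$ regarded as landing in $B_k\ot\U_k\subset B_k\ot\U$, therefore determines $\rho$ uniquely (note $\chi_k$ is unital, so each component is unital). In (2), the coproduct property of $A_1*A_2$ gives at once a unique $\rho$ restricting to $\rho_k\,:\,A_k\rightarrow B\ot\U_k\subset B\ot\U$ on each factor.

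For the universal property in (1), I would take an arbitrary quantum homomorphism $\pi\,:\,A\rightarrow B\ot C$ and set $\pi_k:=(\chi_k\ot\id)\pi\,:\,A\rightarrow B_k\ot C$. Each $\pi_k$ is a quantum homomorphism from $A$ to $B_k$, so the universal property of $\rho_k$ yields a unique unital $*$-homomorphism $\widetilde{\pi}_k\,:\,\U_k\rightarrow C$ with $(\id\ot\widetilde{\pi}_k)\rho_k=\pi_k$, and the free product property of $\U=\U_1*\U_2$ assembles these into a unique $\widetilde{\pi}\,:\,\U\rightarrow C$ restricting to $\widetilde{\pi}_k$ on each $\U_k$. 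To verify $(\id\ot\widetilde{\pi})\rho=\pi$ I would compose both sides with $\chi_k\ot\id$ and use the commutation above: since $\rho_k$ lands in $B_k\ot\U_k$ and $\widetilde{\pi}|_{\U_k}=\widetilde{\pi}_k$, the identity reduces to $(\id\ot\widetilde{\pi}_k)\rho_k=\pi_k$, and because $\chi_1,\chi_2$ jointly separate $B$ the full equality follows. Uniqueness of $\widetilde{\pi}$ is the same computation run backwards: any competitor restricts on each $\U_k$ to a solution of the universal problem for $\rho_k$, hence equals $\widetilde{\pi}_k$, and two homomorphisms out of $\U_1*\U_2$ agreeing on both factors coincide. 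Part (2) is identical with the two free products swapped: given $\pi\,:\,A_1*A_2\rightarrow B\ot C$ I would restrict to $\pi_k:=\pi|_{A_k}$, produce $\widetilde{\pi}_k$ from the universal property of $\rho_k$, assemble $\widetilde{\pi}$ via the coproduct property of $\U_1*\U_2$, and check $(\id\ot\widetilde{\pi})\rho=\pi$ by restricting to each $A_k$, where it again reduces to $(\id\ot\widetilde{\pi}_k)\rho_k=\pi_k$.

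I do not expect any genuine obstacle: the argument is a formal consequence of the relevant universal properties, and the single place where an error could creep in is the bookkeeping of the inclusions $\U_k\hookrightarrow\U$ together with the commutation of the two tensor-leg operations. Once this is set up carefully, both statements drop out. I would finally remark that the same scheme applies verbatim to finite direct sums $B=\bigoplus_\kappa B_\kappa$ and finite free products $A=A_1*\cdots*A_n$, which is precisely how the lemma will be used to reduce a general finite dimensional $B$ to the case $B=M_N(\C)$.
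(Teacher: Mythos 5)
Your proof is correct, and it is exactly the formal verification the paper has in mind: the authors' proof of this lemma is literally the single word ``Straightforward,'' and your diagram chase via the universal properties of the matched pairs, the direct sum, and the full free product (including the separate uniqueness check for $\widetilde{\pi}$ by restricting a competitor to each $\U_k$) fills in precisely the omitted details. No gaps: the only delicate points — the identification $B\ot\U=(B_1\ot\U)\oplus(B_2\ot\U)$, the commutation of the two tensor-leg maps, and the inclusions $\U_k\subset\U$ — are all handled correctly.
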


\begin{proof}
Straightforward.
\end{proof}

\begin{proposition}\label{PropMNcase}
Let $A$ be a unital C*-algebra and $B=\bigoplus_{\kappa=1}^KM_{N_\kappa}(\C)$ a finite dimensional C*-algebra. Let $(e^\kappa_{ij})$ be the matrix units in $M_{N_\kappa}(\C)$. The pair $(A,B)$ is matched with the universal quantum homomorphism given by:

$$\rho_{A,B}\,:\,A\rightarrow B\ot\left(\underset{\kappa=1}{\overset{K}{*}}M_{N_\kappa}(\C)'\cap(A* M_{N_\kappa}(\C))\right),\,\,a\mapsto\sum_{\kappa=1}^{K}\sum_{i,j=1}^{N_\kappa}e^\kappa_{ij}\ot\mu_\kappa\left(\sum_{r=1}^{N_\kappa}e^\kappa_{ri}ae^\kappa_{jr}\right),$$
where $\mu_s\,:\,M_{N_s}(\C)'\cap(A* M_{N_s}(\C))\rightarrow \underset{\kappa=1}{\overset{K}{*}}M_{N_\kappa}(\C)'\cap(A* M_{N_\kappa}(\C))$ is the canonical inclusion.

\end{proposition}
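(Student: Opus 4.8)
The plan is to first reduce to the case $B=M_N(\C)$ and then identify $\rho_{A,B}$ with the inclusion of $A$ into the free product $A* M_N(\C)$ under the standard amplification isomorphism. For the reduction I would apply Lemma \ref{RmkFree}$(1)$ inductively on $K$: writing $B=M_{N_1}(\C)\oplus\big(\bigoplus_{\kappa\geq 2}M_{N_\kappa}(\C)\big)$ and using that the C*-algebra of a matched pair over a direct sum is the free product of the individual ones, together with the defining relations $(\chi_\kappa\ot\id)\rho_{A,B}=\rho_{A,M_{N_\kappa}(\C)}$, yields exactly the displayed formula with $\mu_\kappa$ the canonical inclusion into the free product. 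So it suffices to treat $K=1$, i.e. $B=M_N(\C)$; I set $D:=A* M_N(\C)$ and $\U:=M_N(\C)'\cap D$.

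The key structural input is that any unital C*-algebra $D$ containing a unital copy of $M_N(\C)$ factors as $D\cong M_N(\C)\ot(M_N(\C)'\cap D)$. Indeed, for $d\in D$ the elements $d^{ij}:=\sum_{r=1}^N e_{ri}\,d\,e_{jr}$ lie in $M_N(\C)'\cap D$, one has the reconstruction $d=\sum_{ij}e_{ij}d^{ij}$, and the map $M_N(\C)\ot(M_N(\C)'\cap D)\to D$, $e_{ij}\ot x\mapsto e_{ij}x$, is a unital $*$-isomorphism with inverse $d\mapsto\sum_{ij}e_{ij}\ot d^{ij}$. Applying this with $D=A* M_N(\C)$ and composing the inclusion $A\hookrightarrow D$ with this inverse isomorphism gives precisely the map $a\mapsto\sum_{ij}e_{ij}\ot\sum_{r}e_{ri}a e_{jr}$, that is, $\rho_{A,M_N(\C)}$. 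In particular $\rho_{A,M_N(\C)}$ is a unital $*$-homomorphism with values in $M_N(\C)\ot\U$, being the composite of an inclusion and a $*$-isomorphism.

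For the universal property, let $\pi\,:\,A\to M_N(\C)\ot C$ be a unital $*$-homomorphism. Combining $\pi$ with the unital $*$-homomorphism $M_N(\C)\to M_N(\C)\ot C$, $m\mapsto m\ot 1_C$, the universal property of the free product produces a unital $*$-homomorphism $\Pi\,:\,A* M_N(\C)\to M_N(\C)\ot C$ restricting to $\pi$ on $A$ and to $m\mapsto m\ot 1_C$ on $M_N(\C)$. Since $\Pi(M_N(\C))=M_N(\C)\ot 1_C$ and $(M_N(\C)\ot 1_C)'\cap(M_N(\C)\ot C)=1\ot C$, the restriction of $\Pi$ to $\U$ takes values in $1\ot C$; identifying $1\ot C$ with $C$ defines a unital $*$-homomorphism $\widetilde{\pi}\,:\,\U\to C$ with $\Pi(x)=1\ot\widetilde{\pi}(x)$ for $x\in\U$. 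Using the reconstruction formula, for $a\in A$ we get $(\id\ot\widetilde{\pi})\rho(a)=\sum_{ij}e_{ij}\ot\widetilde{\pi}(a^{ij})=\sum_{ij}\Pi(e_{ij})\Pi(a^{ij})=\Pi\big(\sum_{ij}e_{ij}a^{ij}\big)=\Pi(a)=\pi(a)$, so $(\id\ot\widetilde{\pi})\rho=\pi$.

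Uniqueness of $\widetilde{\pi}$ follows once one checks that the elements $a^{ij}$ (for $a\in A$, $1\leq i,j\leq N$) generate $\U$ as a C*-algebra: the C*-subalgebra $\U_0$ they generate satisfies $M_N(\C)\ot\U_0\supseteq M_N(\C)\ot 1$ and contains every $\rho(a)$, hence contains $D=M_N(\C)\ot\U$, forcing $M_N(\C)\ot\U_0=M_N(\C)\ot\U$ and thus $\U_0=\U$ after applying a slice map $(\omega\ot\id)$ with $\omega\in M_N(\C)^*$. Since the decomposition $\pi(a)=\sum_{ij}e_{ij}\ot(\,\cdot\,)$ determines each slice $\widetilde{\pi}(a^{ij})$ uniquely, any $\widetilde{\pi}'$ with $(\id\ot\widetilde{\pi}')\rho=\pi$ agrees with $\widetilde{\pi}$ on these generators, hence everywhere. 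The main point requiring care is the factorization $D\cong M_N(\C)\ot(M_N(\C)'\cap D)$ and the verification that $\rho_{A,M_N(\C)}$ is exactly the resulting inclusion; once this identification is in place, the universal property becomes the universal property of the free product restricted to the relative commutant.
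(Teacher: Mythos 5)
Your proof is correct and follows essentially the same route as the paper's: reduction to $B=M_N(\C)$ via Lemma \ref{RmkFree}, the elements $\sum_r e_{ri}ae_{jr}$ in the relative commutant $M_N(\C)'\cap(A*M_N(\C))$, the free-product universal property combined with the fact that the relative commutant of $M_N(\C)\ot 1$ in $M_N(\C)\ot C$ is $1\ot C$, and generation of $\U$ by these elements for uniqueness. The isomorphism $A*M_N(\C)\cong M_N(\C)\ot\U$ that you front-load as a general structural fact is exactly what the paper establishes in the course of its proof and records as Remark \ref{RmkIso}; only your uniqueness step (a slice-map argument) differs cosmetically from the paper's construction of mutually inverse homomorphisms.
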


\begin{proof}
By Assertion $(1)$ of Lemma \ref{RmkFree}, it suffices to prove the statement for $B=M_N(\C)$. For this case we follow the proof of McClanahan (when $A=C^*(\Z)$ in \cite{McC92}). Define, for $a\in A$ and $1\leq i,j\leq N$,  $\rho_{ij}(a):=\sum_{r=1}^Ne_{ri}ae_{jr}\in A* M_N(\C)$. Note that $e_{st}\rho_{ij}(a)=e_{si}ae_{jt}=\rho_{ij}(a)e_{st}$ for all $1\leq s,t\leq N$. Hence, $\rho_{ij}(a)\in \U:=M_N(\C)'\cap (A*M_N(\C))$. It is now easy to check that $\rho\,:\,A\rightarrow M_N(\C)\ot \U$, $a\mapsto\sum_{i,j}e_{ij}\ot\rho_{ij}(a)$ is a unital $*$-homomorphism. Let us now check the universal property. Let $\pi\,:\,A\rightarrow M_N(\C)\ot C$ be a quantum homomorphism. By universal property of the free product there is a unique unital $*$-homomorphism $\psi\,:\,A*M_N(\C)\rightarrow M_N(\C)\ot C$ such that $\psi(a)=\pi(a)$ for all $a\in A$ and $\psi(b)=b\ot 1$ for all $b\in M_N(\C)$. Note that $\psi(\U)\subseteq\psi(M_N(\C))'\cap(M_N(\C)\ot C)=C$. Then $\widetilde{\pi}:=\psi\vert_\U\,:\,\U\rightarrow C$ is a unital $*$-homomorphism satisfying, for all $a\in A$,
\begin{eqnarray*}
(\id\ot \widetilde{\pi})\rho(a)&=&\sum_{i,j}e_{ij}\ot\psi(\rho_{ij}(a))=\sum_{i,j,r}(e_{ij}e_{ri}\ot 1)\pi(a)(e_{jr}\ot 1)\\
&=&\sum_{i,j}(e_{ii}\ot 1)\pi(a)(e_{jj}\ot 1)=\pi(a)
\end{eqnarray*}
To show the uniqueness of $\widetilde{\pi}$, it suffices to show that the C*-algebra $\mathcal{C}\subseteq\U$ generated by $\{\rho_{ij}(a)\,:\,1\leq i,j\leq N, a\in A\}$ is actually equal to $\U$. Note that the map $\varphi\,:\,M_N(\C)\ot \mathcal{C}\rightarrow A*M_N(\C)$, $b\ot x\mapsto bx$ is a unital $*$-homomorphism, by the universal property of tensor products. We also note that $\rho(A)\subseteq M_N(\C)\ot \mathcal{C}$. Applying the first part of the proof to the map $\pi:=\rho\,:\,A\rightarrow M_N(\C)\ot \mathcal{C}$ we get the morphism $\psi\,:\,A*M_N(\C)\rightarrow M_N(\C)\ot \mathcal{C}$ which obviously satisfies $\varphi\circ\psi=\id$ and $\psi\circ\varphi=\id$. Since $\psi(\U)\subseteq \mathcal{C}$ we have $\varphi\circ\psi(\U)=\U\subseteq \varphi(\mathcal{C})=\mathcal{C}$.
\end{proof}

\begin{remark}\label{RmkIso} The proof above implies that the map $\rho_N: A* M_N(\C)\rightarrow M_N(\C)\otimes\U(A,M_N(\C))$, sending $a\in A$ to $\rho_{A,M_N(\C)}(a)$ and $b\in M_N(\C)$ to $b\ot 1$, is an isomorphism.\end{remark}

\noindent In fact, universal quantum homomorphisms for tuples $(A,B)$, where $B$ is a finite dimensional C*-algebra, admit a generalisation to the case of unital completely positive (ucp) maps, in the following sense

\begin{corollary}\label{CorUnivUCP}
Let $A,B$ be unital C*-algebras, with $B$ finite dimensional. Then, for all ucp map $\varphi\,:\,A\rightarrow B\ot C$, there exists a ucp map $\widetilde{\varphi}\,:\,\U(A,B)\rightarrow C$ such that $(\id\ot\widetilde{\varphi})\rho_{A,B}=\varphi$.
\end{corollary}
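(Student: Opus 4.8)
The plan is to upgrade the universal property from $*$-homomorphisms to ucp maps by a free-product argument, the key external input being Boca's theorem that unital completely positive maps always extend to the full free product \cite{Bo91}. By Lemma \ref{RmkFree} together with the free-product description of $\U(A,B)$ in Proposition \ref{PropMNcase}, it suffices to treat $B=M_N(\C)$; I defer the reduction to the last step. So assume $B=M_N(\C)$ and let $\varphi\,:\,A\to M_N(\C)\ot C$ be ucp. Alongside $\varphi$ I would consider the unital $*$-homomorphism $\beta\,:\,M_N(\C)\to M_N(\C)\ot C$, $b\mapsto b\ot 1$, which is in particular ucp. Boca's theorem then produces a ucp map $\Phi\,:\,A*M_N(\C)\to M_N(\C)\ot C$ with $\Phi|_A=\varphi$ and $\Phi|_{M_N(\C)}=\beta$.

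Since $\beta$ is a $*$-homomorphism, $M_N(\C)$ lies in the multiplicative domain of $\Phi$, so $\Phi$ is $M_N(\C)$-bimodular: $\Phi((b\ot 1)x(b'\ot 1))=(b\ot 1)\Phi(x)(b'\ot 1)$. I would then transport $\Phi$ through the isomorphism $\rho_N\,:\,A*M_N(\C)\to M_N(\C)\ot\U(A,M_N(\C))$ of Remark \ref{RmkIso}, which carries $M_N(\C)$ onto $M_N(\C)\ot 1$ and $\U:=\U(A,M_N(\C))$ onto $1\ot\U=M_N(\C)'\cap(M_N(\C)\ot\U)$. The resulting ucp map $\Phi':=\Phi\circ\rho_N^{-1}\,:\,M_N(\C)\ot\U\to M_N(\C)\ot C$ is $(M_N(\C)\ot 1)$-bimodular, and such maps are exactly of the form $\id_{M_N}\ot\widetilde{\varphi}$: bimodularity forces $\Phi'(1\ot u)\in M_N(\C)'\cap(M_N(\C)\ot C)=1\ot C$, defining a ucp map $\widetilde{\varphi}\,:\,\U\to C$ (a slice of $\Phi'$) by $\Phi'(1\ot u)=1\ot\widetilde{\varphi}(u)$, and then $\Phi'(b\ot u)=(b\ot 1)\Phi'(1\ot u)=b\ot\widetilde{\varphi}(u)$. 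Since $\rho_N$ restricts to $\rho_{A,M_N(\C)}$ on $A$ and $\Phi|_A=\varphi$, one gets $(\id\ot\widetilde{\varphi})\rho_{A,M_N(\C)}=\Phi'\circ(\rho_N|_A)=\Phi|_A=\varphi$, settling the matrix case.

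For general $B=\bigoplus_{\kappa=1}^K M_{N_\kappa}(\C)$, I would decompose $\varphi=(\varphi_\kappa)_\kappa$ with $\varphi_\kappa:=(\chi_\kappa\ot\id)\varphi\,:\,A\to M_{N_\kappa}(\C)\ot C$ ucp, where $\chi_\kappa\,:\,B\to M_{N_\kappa}(\C)$ is the canonical projection. The matrix case yields ucp maps $\widetilde{\varphi_\kappa}\,:\,\U(A,M_{N_\kappa}(\C))\to C$ with $(\id\ot\widetilde{\varphi_\kappa})\rho_{A,M_{N_\kappa}(\C)}=\varphi_\kappa$. By Lemma \ref{RmkFree} one has $\U(A,B)=\ast_\kappa\U(A,M_{N_\kappa}(\C))$ and $(\chi_\kappa\ot\id)\rho_{A,B}=(\id\ot\iota_\kappa)\rho_{A,M_{N_\kappa}(\C)}$, with $\iota_\kappa$ the canonical inclusion. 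Applying Boca's theorem once more to the maps $\widetilde{\varphi_\kappa}$ on the free factors produces a ucp map $\widetilde{\varphi}\,:\,\U(A,B)\to C$ with $\widetilde{\varphi}\circ\iota_\kappa=\widetilde{\varphi_\kappa}$. Composing the desired identity with each $\chi_\kappa\ot\id$ and inserting the two displayed relations reduces it to $(\id\ot\widetilde{\varphi_\kappa})\rho_{A,M_{N_\kappa}(\C)}=\varphi_\kappa$, which holds; as the $\chi_\kappa$ separate $B$, this gives $(\id\ot\widetilde{\varphi})\rho_{A,B}=\varphi$.

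The only non-elementary ingredient is Boca's extension theorem; once it is available, everything else (the multiplicative-domain observation and the identification of $(M_N(\C)\ot 1)$-bimodular maps with $\id_{M_N}\ot\widetilde{\varphi}$) is routine. The point to keep in mind, and the reason the statement only asserts existence, is that unlike for $*$-homomorphisms the extension $\widetilde{\varphi}$ is genuinely non-unique: Boca's free-product ucp map is not canonical, so I expect the main care to go into organizing the reduction rather than into any single hard estimate.
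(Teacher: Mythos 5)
Your proposal is correct and is essentially the paper's own argument: the paper likewise reduces to $B=M_N(\C)$, forms the free product ucp map $\varphi*\iota$ (Boca's theorem, with $\iota(b)=b\ot 1$) on $A*M_N(\C)$, transports it through the isomorphism of Remark \ref{RmkIso}, and exploits that $M_N(\C)$ lies in the multiplicative domain — the only cosmetic difference being that the paper fixes a state $\omega\in M_N(\C)^*$ and slices, checking $\widetilde{\varphi}(\rho_{ij}(a))=\varphi_{ij}(a)$ on generators, whereas you observe that bimodularity forces the transported map into $1\ot C$, which identifies the same map without choosing $\omega$. Your treatment of general finite-dimensional $B$ (componentwise reduction plus a second application of Boca's theorem across the free product $\U(A,B)=\ast_\kappa\,\U(A,M_{N_\kappa}(\C))$) is exactly what the paper's terse ``Lemma \ref{RmkFree} and induction'' amounts to.
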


\begin{proof}
For ease of notation, let $\rho:=\rho_{A,B}$ and $\U:=\U(A,B)$. We first assume that $B=M_N(\C)$. Let $\psi\,:\,A*M_N(\C)\rightarrow M_N(\C)\ot \U$ be the isomorphism from Remark \ref{RmkIso}. Let $\varphi\,:\,A\rightarrow M_N(\C)\ot C$ be a ucp map and define $\iota : M_N(\C)\rightarrow M_N(\C)\ot C$, mapping $b\mapsto b\ot 1$. Fix a state $\omega\in M_N(\C)^*$ and define the ucp map $\widetilde{\varphi}\,:\,\U\rightarrow C$ by $\widetilde{\varphi}(x):=(\omega\ot\id)(\varphi*\iota)\psi^{-1}(1\ot x)$. Writing $\rho(a)=\sum_{ij}e_{ij}\ot\rho_{ij}(a)$ and $\varphi(a)=\sum_{ij}e_{ij}\ot\varphi_{ij}(a)$, we have $\psi^{-1}(1\ot\rho_{ij}(a))=\sum_re_{ri}ae_{jr}$. Since $M_N(\C)$ is in the multiplicative domain of $(\varphi*\iota)$ we find $\widetilde{\varphi}(\rho_{ij}(a))=(\omega\ot\id)\sum_r(e_{ri}\ot 1)\varphi(a)(e_{jr}\ot 1)=\varphi_{ij}(a)$ so $(\id\ot\widetilde{\varphi})\rho=\varphi$. The general finite dimensional case now follows using Lemma \ref{RmkFree} and induction.
\end{proof}

\begin{remark}
Notably, there is no uniqueness in general for the ucp map $\widetilde{\varphi}$, unlike the case of homomorphisms. Let us consider the case $C=\C$,  ${\dim}(A)\geq 2$, and $B=M_N(\C)$ with $N\geq 2$. We can, in this case, construct two different states $\omega_1,\omega_2\in \U(A,M_N(\C))^*$ such that $(\id\ot\omega_1)\rho=(\id\ot\omega_2)\rho$, where $\rho:=\rho_{A,M_N(\C)}$. Indeed, let $\omega_1'\in A^*$ be any GNS-faithful state and consider the ucp map $\varphi\,:\,A\rightarrow M_N(\C)$ defined by $\varphi(a):=\omega_1'(a)1$. Consider the two unital $*$-homomorphisms $A\rightarrow M_N(\C)\ot A$, $a\mapsto 1\ot a$ and $A\rightarrow M_N(\C)\ot A^{*_rN}$, $a\mapsto \sum_{i=1}^Ne_{ii}\ot\nu_i(a)$, where $*_r$ denotes the reduced free product with respect to the GNS-faithful state $\omega_1'$ and $\nu_i\,:\, A\rightarrow A^{*_rN}$ is the $i^{th}$ free copy of $A$. By the universal property, we get two unique morphisms $\pi_1\,:\,\U(A,M_N(\C))\rightarrow A$ and $\pi_2\,:\,\U(A,M_N(\C))\rightarrow A^{*_rN}$ such that $(\id\ot\pi_1)\rho(a)=1\ot a$ and $(\id\ot\pi_2)\rho(a)=\sum_{i=1}^Ne_{ii}\ot\nu_i(a)$ for all $a\in A$. It is easily seen that $\pi_1$ and $\pi_2$ are surjective. Let $\omega_2':=*_i\omega_1'$ be the free product state on $A^{*_rN}$, which is GNS-faithful. Observe that $(\id\ot\omega_1'\pi_1)\rho=\varphi=(\id\otimes\omega_2'\pi_2)\rho$. However, we have $\omega_1:=\omega_1'\pi_1\neq\omega_2'\pi_2=:\omega_2$, which can be shown as follows- as ${\rm dim}(A)\geq 2$ and $\omega_1'$ is GNS-faithful, $\omega_1'$ is not a character. Hence, there exists $a\in A$ such that $\omega_1'(a^*a)\neq\vert\omega_1'(a)\vert^2$. In particular, $a\notin\C 1$ so that $b:=a-\omega_1'(a)1\neq 0$. Note that $\omega_1'(b)=0$ and $\omega_1'(b^*b)=\omega_1'(a^*a)-\vert\omega_1'(a)\vert^2\neq 0$. Consider the element $y:=\rho_{11}(b^*)\rho_{22}(b)\in\U(A,M_N(\C))$, which is well defined as $N\geq 2$. By uniqueness, we have that for all $x\in A$ and $1\leq i,j\leq N$, $\pi_1(\rho_{ij}(x))=\delta_{ij}x$ and $\pi_2(\rho_{ij}(x))=\delta_{ij}\nu_i(x)$, so we deduce that:
$$\omega_1'\pi_1(y)=\omega_1'(b^*b)\neq 0\quad\text{and}\quad\omega_2'\pi_2(y)=\omega_2'(\nu_1(b^*)\nu_2(b))=0.$$
\end{remark}

\noindent Next, we explore some C*-algebraic properties of $\U(A,B)$. Let us recall the following definitions.

\begin{definition} Let $A,B$ be two $C^{\ast}$-algebras. Let $J$ be a two-sided closed ideal in $B$. A completely contractively positive (c.c.p.) map $\phi:A\rightarrow B/J$ is said to be liftable if there exists a c.c.p. map $\psi:A\rightarrow B$ such that $\pi\circ\psi=\phi$, where $\pi:B\rightarrow B/J$ is quotient map. A c.c.p. map $\phi:A\rightarrow B/J$ is said to be locally liftable if for every finite-dimensional operator systems $E$ of $A$, there exists a c.c.p. map $\psi:A\rightarrow B$ such that $\pi\circ\psi\vert_E=\phi\vert_E$.
\end{definition}
\begin{definition}
A unital $C^{\ast}$-algebra $A$ has lifting property (LP) if for any c.c.p. map $\phi:A\rightarrow B/J$ is liftable, for any $C^{\ast}$-algebra $B$, and for any closed ideal $J$. A unital $C^{\ast}$-algebra $A$ has locally lifting property (LLP) if any c.c.p. map $\phi:A\rightarrow B/J$ is locally liftable for any $C^{\ast}$-algebra $B$ and closed ideal $J$ of $B$.
\end{definition}

\begin{definition} A unital $C^{\ast}$-algebra $A$ is called RFD if given any non-zero $a\in A$, there exists a finite dimensional representation $\pi_a:A\rightarrow M_{k(a)}(\mathbb{C})$ such that $\pi_a(a)\neq 0$.
\end{definition}

\begin{definition} A $C^{\ast}$-algebra $A$ is said to be primitive $C^{\ast}$-algebra if there exists a faithful, irreducible ${\ast}$-representation $\pi:A\rightarrow B(H)$, for some Hilbert space $H$.
\end{definition}

\begin{definition}
A separable C*-algebra $A$ satisfies the UCT (Universal Coefficient Theorem) if the natural short exact sequence
$$0\rightarrow {\rm Ext}_\Z^1({\rm K}_*(A), K_*(B))\rightarrow K^*(A,B)\overset{\gamma}{\rightarrow}{\rm Hom}( K_*(A),K_*(B))\rightarrow 0$$
is exact for any separable C*-algebra $B$, where $\gamma$ is the dual of the $KK$-pairing $$K_*(A)\ot KK_*(A,B)\rightarrow K_*(B).$$
\end{definition}

\begin{proposition}\label{PropLifting}
Let $A, B$ be unital C*-algebras and assume that $B$ is finite dimensional.
\begin{enumerate}
    \item $A$ has LP (resp. LLP) if and only if $\U(A,B)$ has LP (resp. LLP).
    \item $A$ is RFD if and only if $\U(A,B)$ is RFD.
    \item If $A$ is RFD and $({\rm dim}(A)-1)({\rm dim}(B)-1)\geq 2$ then $\U(A,B)$ is primitive.
    \item If $A$ satisfies the UCT then $\Ucal(A,B)$ satisfies the UCT.
\end{enumerate}
\end{proposition}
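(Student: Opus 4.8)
The plan is to reduce all four statements to the single-block case $B=M_N(\C)$, where Remark \ref{RmkIso} supplies the isomorphism $A*M_N(\C)\cong M_N(\C)\ot\U(A,M_N(\C))$, and then to propagate through the decomposition $\U(A,B)=*_{\kappa=1}^{K}\U(A,M_{N_\kappa}(\C))$ coming from Lemma \ref{RmkFree}$(1)$. Two \emph{c.c.p.-retracts} will do most of the work. First, the opening proposition of this section gives a faithful $\rho\,:\,A\to B\ot\U(A,B)$ and a $*$-homomorphism $\pi\,:\,\U(A,B)\to A$ with $(\id\ot\pi)\rho(a)=1_B\ot a$; fixing a state $\omega\in B^*$, the map $\iota:=(\omega\ot\id)\rho\,:\,A\to\U(A,B)$ is c.c.p. and satisfies $\pi\iota=\id_A$, so $A$ is a c.c.p.-retract of $\U(A,B)$. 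Second, for $B=M_N(\C)$ the unital embedding $u\mapsto 1_N\ot u$ together with the slice $\tr\ot\id$ exhibits $\U(A,M_N(\C))$ as a c.c.p.-retract of $M_N(\C)\ot\U(A,M_N(\C))\cong A*M_N(\C)$. The elementary fact used repeatedly is that LP and LLP pass to c.c.p.-retracts: if $rs=\id_C$ with $r,s$ c.c.p. and the target of $s$ has LP (resp. LLP), one lifts $\phi r$ and composes the lift with $s$.

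For $(1)$ the reverse implications are immediate from the first retract. For the forward implications I use that the full free product preserves LP and LLP (Kirchberg for LLP, and the analogous statement for LP): if $A$ has LP (resp. LLP) then, $M_{N_\kappa}(\C)$ being finite-dimensional hence nuclear, $A*M_{N_\kappa}(\C)\cong M_{N_\kappa}(\C)\ot\U(A,M_{N_\kappa}(\C))$ has LP (resp. LLP), so $\U(A,M_{N_\kappa}(\C))$ does by the second retract, and finally $\U(A,B)=*_\kappa\U(A,M_{N_\kappa}(\C))$ does by free-product stability once more. For $(2)$ I invoke the Exel--Loring theorem that full free products of RFD C*-algebras are RFD, together with the elementary facts that RFD passes to C*-subalgebras and to corners. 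Then $A$ RFD $\Rightarrow A*M_{N_\kappa}(\C)$ RFD $\Rightarrow\U(A,M_{N_\kappa}(\C))$ RFD (a corner of the latter) $\Rightarrow\U(A,B)$ RFD; conversely the faithful $\rho$ embeds $A$ into $B\ot\U(A,B)$, which is RFD as soon as $\U(A,B)$ is, so $A$ is RFD.

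For $(3)$ the essential input is the primitivity criterion for unital full free products: if $C_1,C_2$ are separable unital RFD C*-algebras with $({\rm dim}(C_1)-1)({\rm dim}(C_2)-1)\geq 2$, then $C_1*C_2$ is primitive. Since primitivity is a Morita invariant for separable C*-algebras, the case $B=M_N(\C)$ follows by applying this to $C_1=A$, $C_2=M_N(\C)$, whose hypothesis $({\rm dim}(A)-1)(N^2-1)\geq 2$ is exactly our assumption. For general $B$ with $K\geq 2$ blocks I apply the criterion directly to a grouping of the free factors of $\U(A,B)=*_\kappa\U(A,M_{N_\kappa}(\C))$ into two RFD C*-algebras (RFD by $(2)$ and Exel--Loring). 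Here one uses that $\U(A,M_{N_\kappa}(\C))$ equals $A$ when $N_\kappa=1$ and is infinite-dimensional when $N_\kappa\geq 2$ (note that $({\rm dim}(A)-1)({\rm dim}(B)-1)\geq 2$ forces ${\rm dim}(A)\geq 2$, so $A\neq\C$), and a short dimension count shows that our hypothesis is precisely what allows the two groups to be chosen with $({\rm dim}-1)({\rm dim}-1)\geq 2$. This matching of the single global inequality with the block decomposition, through its degenerate cases (e.g. $K=2$ with two one-dimensional blocks, which the hypothesis forces to have ${\rm dim}(A)\geq 3$), is the main obstacle and the only place where the exact form of the hypothesis is used.

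For $(4)$ the UCT is invariant under $KK$-equivalence, hence under the Morita equivalence $\U(A,M_N(\C))\sim M_N(\C)\ot\U(A,M_N(\C))\cong A*M_N(\C)$, and the UCT class is closed under unital full free products of separable C*-algebras amalgamated over $\C$. Thus $A$ UCT $\Rightarrow A*M_{N_\kappa}(\C)$ UCT (as $M_{N_\kappa}(\C)$ is type I, hence UCT) $\Rightarrow\U(A,M_{N_\kappa}(\C))$ UCT $\Rightarrow\U(A,B)=*_\kappa\U(A,M_{N_\kappa}(\C))$ UCT. The one non-formal ingredient is the closure of the UCT class under full free products, which I would cite from the literature on the $KK$-theory of full free product C*-algebras; verifying its hypotheses in the present setting is routine.
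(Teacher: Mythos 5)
Your proposal is correct and takes essentially the same route as the paper: reduction to the single-block case via the free-product decomposition from Lemma \ref{RmkFree}, the isomorphism $A*M_N(\C)\cong M_N(\C)\ot\U(A,M_N(\C))$ of Remark \ref{RmkIso}, free-product stability of LP/LLP/RFD/UCT, the Dykema--Torres-Ayala primitivity criterion combined with the prime-plus-separability argument for the case $B=M_N(\C)$, and slice/retract maps for the converse directions. The only differences are presentational: your explicit c.c.p.-retract lemma cleanly packages what the paper does by hand for the converse of $(1)$ (composing with $\pi$, lifting, and slicing with a state $\omega\in B^*$), and your two-group reorganization of the free factors in $(3)$ is equivalent to the paper's explicit enumeration of cases according to the block structure of $B$.
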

\begin{proof} For notational ease, we denote $\rho:=\rho_{A,B}$ and $\U:=\U(A,B)$.

\vspace{0.2cm}

\noindent$(1)$. Suppose that $A$ has LP (resp. LLP) It is known that LP and LLP are preserved under full free products (see \cite{Bo97} for LP and \cite{Pi96} for LLP). Hence, by Lemma \ref{RmkFree}, it suffices to prove that $\U$ has LP (resp. LLP) for $B=M_N(\C)$. In that case, $M_N(\C)\ot\U\simeq A*M_N(\C)$ has LP (resp. LLP), whence it follows that $\U$ has LP (resp. LLP).

\vspace{0.2cm}
\noindent Conversely, assume that $\U$ has LLP. Let $\pi\,:\,\U\rightarrow A$ be the unique unital $*$-homomorphism such that $(\id\ot\pi)\rho(a)=1_B\ot a$, for all $a\in A$. Let $\varphi\,:\,A\rightarrow C/J$ be a ucp map, $E\subset A$ a finite dimensional operator system  and write $p\,:\,C\rightarrow C/J$ the canonical surjection. Consider the ucp map $\id_B\ot\varphi\pi\,:\,B\ot\U\rightarrow B\ot(C/J)=(B\ot C)/(B\ot J)$. Since $B\ot\U$ has LLP, we find a ucp map $\theta\,:\,\rho(E)\rightarrow B\ot C$ such that $(\id_B\ot p)\theta=(\id_B\ot\varphi\pi)\vert_{\rho(E)}$. Fix any state $\omega\in B^*$ and define the ucp map $\psi=(\omega\ot\id)\theta\rho\,:\,E\rightarrow C$. Then, it is easy to check that $p\psi=\varphi\vert_E$. The proof in the LP case is similar.

\vspace{0.2cm}

\noindent$(2)$. Assume that $A$ is RFD. It is known that RFD is preserved under full free products \cite{MR1168356} so, as in $(1)$, we may and will assume that $B=M_N(C)$ so that $M_N(\C)\ot\U\simeq A*M_N(\C)$ is RFD which implies that $\U$ itself is RFD. Conversely, if $\U$ is RFD then $B\ot\U$ also is and since $\rho$ is faithful, we deduce that $A$ is RFD.
\vspace{0.2cm}

\noindent$(3)$. It is shown in \cite{DyTo14} that a full free product of RFD unital C*-algebras $A_1$, $A_2$ with $({\rm dim}(A_1)-1)({\rm dim}(A_2)-1)\geq 2$ is primitive. First assume that ${\rm dim}(B)=2$ so that $B=\C^2$, ${\rm dim}(A)\geq 3$ and $\U(A,B)=A*A$ which is primitive. Assume now that ${\rm dim}(B)\geq 3$. If $B$ is abelian, write $B=\C^L$ with $L\geq 3$. Then ${\rm dim}(A)\geq 2$ and $\Ucal(A,B)=(A*A)*A^{*(L-2)}$, with ${\rm dim}(A)\geq 2$ so ${\rm dim}(A*A)= \infty$ hence $\U(A,B)$ is again primitive. If $B$ is not abelian, write $B=\C^L\oplus B_0$, where $B_0=\bigoplus_{\kappa=1}^KM_{N_\kappa}(\C)$, with $K\geq 1$ and each $N_\kappa\geq 2$. If $L=0$, we have $\U(A,B)=*_{\kappa=1}^K\U(A,M_{N_\kappa}(\C))$. Note that, whenever $N\geq 2$ and ${\rm dim}(A)\geq 2$ we have ${\rm dim}(\U(A,M_{N}(\C))=\infty$ so $\U(A,B)$ is primitive whenever $K\geq 2$. If $K=1$ i.e $B=M_N(\C)$, $A*M_N(\C)=M_N(\C)\ot\U(A,M_N(\C))$ is primitive. Therefore, $M_N(\C)\ot\U(A,M_N(\C))$ is prime. Consequently, $\U(A,M_N(\C))$ is prime. Since $\U(A,M_N(\C))$ is separable, it follows that $\U(A,M_N(\C))$ is primitive. If $L\geq 1$ then $\U(A,B)=A^{*L}*\U(A,B_0)$ is also primitive, as follows from the primitivity of full free products, as mentioned above, since ${\rm dim}(A)\geq 2$ and ${\rm dim}(\U(A,B_0))=\infty$.

\vspace{0.2cm}

\noindent$(4)$. Since the UCT is stable under free products \cite[Proposition 2.2]{RS22} we may and will assume, by Lemma \ref{RmkFree}, that $B=M_N(\C)$. In that case, by Remark \ref{RmkIso}, $M_N(\C)\ot\Ucal(A,M_N(\C))\simeq A* M_N(\C)$. The stability of UCT under free products implies that $M_N(\C)\ot\Ucal(A,M_N(\C))$ satisfies the UCT. Since $M_N(\C)\ot\Ucal(A,M_N(\C))$ is Morita equivalent to $\Ucal(A,M_N(\C))$ and since the UCT is stable under KK-equivalence, we deduce that $\Ucal(A,M_N(\C))$ satisfies the UCT.
\end{proof}

\begin{remark}
Concerning the assumptions in statement $(3)$, let us note that:
\begin{itemize}
    \item If both $A=B=\C^2$ then $\U(A,B)=\C^2*\C^2$ is not primitive \cite{DyTo14}.
    \item $\U(C^*(\Z),M_N(\C))$ is primitive for $N\geq 2$ but $C^*(\Z)$ is not primitive since it is abelian.
\end{itemize}
\end{remark}

\noindent We can also deduce $K$-theory computations. For simplicity, we write $\U_N(A):=\U(A,M_N(\C))$.

\begin{corollary}\label{CorKtheory1} For all $N\geq 1$, $\U_N(A)$ and $A* M_N(\C)$ are Morita equivalent. In particular: $$K_0(\U_N(A))=\frac{K_0(A)\oplus \Z}{\mathbb{Z}([1_A]-Nx)}\quad\text{and}\quad K_1(\U_N(A))=K_1(A),$$
where $x$ is the canonical generator of $\Z$.
\end{corollary}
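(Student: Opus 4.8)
The plan is to deduce everything from the isomorphism recorded in Remark~\ref{RmkIso} together with the standard six-term exact sequence for full free products. First I would observe that Remark~\ref{RmkIso} gives an isomorphism $A*M_N(\C)\cong M_N(\C)\ot\U_N(A)=M_N(\U_N(A))$, exhibiting $A*M_N(\C)$ as the $N\times N$ matrix amplification of $\U_N(A)$. Since passing to $M_N(\,\cdot\,)$ is a strong Morita equivalence, this already proves the first assertion; and because K-theory is a Morita invariant it yields $K_i(\U_N(A))\cong K_i(A*M_N(\C))$ for $i=0,1$. Thus the problem reduces to computing the K-groups of the full free product $A*M_N(\C)$.

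For this I would invoke the Mayer--Vietoris six-term exact sequence for full free products amalgamated over $\C1$ (due to Cuntz). With $A_1=A$ and $A_2=M_N(\C)$ its degree-zero part reads
\begin{equation*}
K_0(\C)\xrightarrow{\alpha}K_0(A)\oplus K_0(M_N(\C))\xrightarrow{\beta}K_0(A*M_N(\C))\xrightarrow{\partial}K_1(\C),
\end{equation*}
where $\beta$ is induced by the two free-product inclusions and $\alpha$ by the two unital embeddings of $\C$ (with a relative sign). Here $K_0(\C)=\Z\langle[1]\rangle$, $K_1(\C)=0$, $K_1(M_N(\C))=0$, and $K_0(M_N(\C))=\Z\langle x\rangle$ with $x$ the class of a minimal projection, so that $[1_{M_N(\C)}]=Nx$ and $\alpha([1])=([1_A],-Nx)$ in $K_0(A)\oplus\Z$.

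I would then read off the two groups. Since $K_1(\C)=0$, the map $\beta$ is surjective, and by exactness its kernel is $\mathrm{im}(\alpha)=\Z([1_A]-Nx)$; hence
\begin{equation*}
K_0(A*M_N(\C))\cong\frac{K_0(A)\oplus\Z}{\Z([1_A]-Nx)}.
\end{equation*}
For $K_1$, using $K_1(\C)=K_1(M_N(\C))=0$, the relevant segment becomes
\begin{equation*}
0\to K_1(A)\to K_1(A*M_N(\C))\xrightarrow{\partial}K_0(\C)\xrightarrow{\alpha}K_0(A)\oplus\Z.
\end{equation*}
Because $N\geq1$, the $M_N(\C)$-component of $\alpha$ is multiplication by $-N\neq0$, so $\alpha$ is injective and therefore $\partial=0$; exactness then gives $K_1(\U_N(A))\cong K_1(A*M_N(\C))\cong K_1(A)$.

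The only genuinely delicate point is the careful invocation of the free-product six-term sequence and the bookkeeping of the maps $\alpha$ and $\beta$: identifying the generator $x$ with $[1_{M_N(\C)}]=Nx$ fixes the relation as $[1_A]-Nx$, and the injectivity of $\alpha$—which is exactly where the hypothesis $N\geq1$ enters—is what forces the connecting map on $K_1$ to vanish and yields the clean identification $K_1(\U_N(A))=K_1(A)$.
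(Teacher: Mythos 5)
Your proof is correct and follows essentially the same route as the paper: Morita equivalence via the isomorphism $A*M_N(\C)\cong M_N(\C)\ot\U_N(A)$ from Remark \ref{RmkIso}, followed by the six-term exact sequence for full free products (the paper cites \cite[Corollary 4.12]{FG18} rather than Cuntz, but the sequence and the bookkeeping of $\alpha$, $\beta$ are identical). Your exactness argument for both $K_0$ and $K_1$, including the use of injectivity of $\alpha$ to kill the connecting map, matches the paper's proof.
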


\begin{proof}
The statement about Morita equivalent has already been observed in the proof of Proposition \ref{PropLifting} $(4)$. Hence, $K_i(\U_N(A))=K_i(A*M_N(\C))$. Using Corollary 4.12 in \cite{FG18}, we have a six-terms exact sequence 
\[
\begin{tikzcd}
	K_0(\mathbb{C}) \ar{r} & K_0(A)\oplus K_0(M_N(\mathbb{C})) \ar{r} & K_0(A\ast M_N(\mathbb{C})) \ar{d} \\
	K_1(A\ast M_N(\mathbb{C})) \ar{u} & K_1(A)\oplus K_1(M_N(\mathbb{C})) \ar{l} & K_1(\mathbb{C}) \ar{l}
\end{tikzcd}
\]
Since $K_0(\mathbb{C})=K_0(M_N(\mathbb{C}))=\mathbb{Z}[e_{11}]$ and $K_1(\mathbb{C})=K_1(M_N(\mathbb{C}))=0$, we thus have a group isomorphism $K_0(A\ast M_N(\mathbb{C}))=\frac{K_0(A)\oplus K_0(M_N(\C))}{\mathbb{Z}([1_A]-[1_{M_N(\C)}])}=\frac{K_0(A)\oplus \Z[e_{11}]}{\mathbb{Z}([1_A]-N[e_{11}])}$.

\vspace{0.2cm}

\noindent Further, as the map $K_0(\C)=\mathbb{Z}\rightarrow K_0(A)\oplus\Z$, $k\mapsto([1_A]-N[e_{11}])$ is clearly injective, we deduce that $K_1(A*M_N(\C))=K_1(A)$.
\end{proof}

\section{Reduced operator algebras of universal quantum homomorphisms}\label{Section4}

\noindent Given a unital C*-algebra $A$ with a state $\omega\in A^*$ and a finite dimensional C*-algebra $B$ with a state $\mu\in B^*$ we introduce in this section a natural state $\widetilde{\omega}\in\Ucal(A,B)^*$ and study the C*-algebra and the von Neumann algebra obtained by the GNS construction of $\widetilde{\omega}$.

\subsection{The reduced C*-algebra}

\subsubsection{Preliminaries}

Given two unital C*-algebras $A_k$ with states $\omega_k\in A_k^*$, $k=1,2$, we recall that there exists a unique state $\omega_1*\omega_2$ one the full free product such that $\omega_1*\omega_2(x)=0$ for all $x\in A_1*A_2$ which can be written as an alternating product of elements from $A_1^\circ:=\ker(\omega_1)$ and $A_2^\circ:=\ker(\omega_2)$. The state $\omega_1*\omega_2$ is called the \textit{free product state}. When $\omega_1$ and $\omega_2$ are both GNS-faithful, the C*-algebra generated in the GNS-construction of $\omega_1*\omega_2$ is denoted by $A_1\underset{r}{*}A_2$ and is called the Voiculescu's reduced free product with respect to states $\omega_1$, $\omega_2$. Note that if $\omega_1$ or $\omega_2$ are not supposed to be GNS-faithful then the C*-algebra given by the GNS-construction of $\omega_1*\omega_2$ is the Voiculescu's reduced free product $(A_1)_{\omega_1}\underset{r}{*}(A_2)_{\omega_2}$ with respect to the (GNS-faithful) states $\omega_1$, $\omega_2$ where, given a state $\omega$ on the unital C*-algebra $A$, we denote by $A_\omega$ the C*-algebra obtained by GNS-construction of $\omega$ and we view $\omega$ as a GNS faithful state on $A_\omega$.

\vspace{0.2cm}

\noindent We will need the following Lemma.

\begin{lemma}\label{LemDualState}
Let $\omega\in A^*$ and $\mu\in M_N(\C)^*$ be states.  Then, for all $b\in M_N(\C)$ and all $x\in M_N(\C)'\cap(A*M_N(\C))$ one has $(\omega*\mu)(bx)=\mu(b)(\omega*\mu)(x)$.
\end{lemma}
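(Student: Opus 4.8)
The plan is to reduce everything to the existence of a state-preserving conditional expectation onto $M_N(\C)$ in the reduced free product, and then exploit that $M_N(\C)$ is a factor.

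First I would pass to the GNS representation $(\pi,H,\Omega)$ of the free product state $\omega*\mu$ on the full free product $A*M_N(\C)$, so that $(\omega*\mu)(y)=\langle\Omega,\pi(y)\Omega\rangle$ for all $y$. As recalled in the preliminaries, the norm closure $M:=\overline{\pi(A*M_N(\C))}$ is the Voiculescu reduced free product $A_\omega\underset{r}{*}M_N(\C)$ with respect to the (GNS-faithful) states induced by $\omega$ and $\mu$. Here I would observe that, since $M_N(\C)$ is simple and $\pi(1)=1$, the restriction $\pi|_{M_N(\C)}$ is automatically faithful, so $\pi(M_N(\C))\cong M_N(\C)$ is a factor sitting inside $M$.

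Next I would invoke the standard fact that the reduced free product carries a conditional expectation $E\,:\,M\rightarrow\pi(M_N(\C))$ which is $\pi(M_N(\C))$-bimodular and preserves the state, i.e. $\langle\Omega,E(y)\Omega\rangle=\langle\Omega,\pi(y)\Omega\rangle$ for all $y\in M$. Writing $X:=\pi(x)$, the hypothesis $x\in M_N(\C)'$ gives $X\in\pi(M_N(\C))'$, and bimodularity then forces $E(X)$ to commute with $\pi(M_N(\C))$: indeed $\pi(b)E(X)=E(\pi(b)X)=E(X\pi(b))=E(X)\pi(b)$ for every $b\in M_N(\C)$. Since $\pi(M_N(\C))$ is a factor, $E(X)=\lambda 1$ is a scalar, and evaluating the state gives $\lambda=\langle\Omega,E(X)\Omega\rangle=\langle\Omega,X\Omega\rangle=(\omega*\mu)(x)$.

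Finally I would compute directly, for $b\in M_N(\C)$,
$$(\omega*\mu)(bx)=\langle\Omega,\pi(b)X\Omega\rangle=\langle\Omega,E(\pi(b)X)\Omega\rangle=\langle\Omega,\pi(b)E(X)\Omega\rangle=\lambda\,\langle\Omega,\pi(b)\Omega\rangle=(\omega*\mu)(x)\,\mu(b),$$
using state-preservation of $E$ in the second equality, bimodularity in the third, and $\langle\Omega,\pi(b)\Omega\rangle=(\omega*\mu)(b)=\mu(b)$ in the last. The only real point requiring care is the passage to the reduced picture and the invocation of the conditional expectation when $\omega$ is not assumed GNS-faithful; this is handled precisely by working inside the GNS space of $\omega*\mu$, where $\pi(M_N(\C))$ remains a full matrix factor thanks to simplicity of $M_N(\C)$. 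An alternative, purely combinatorial route would expand $x$ using the generators $\rho_{ij}(a)=\sum_r e_{ri}ae_{jr}$ from Proposition \ref{PropMNcase} and evaluate $(\omega*\mu)$ on alternating words, but the conditional-expectation argument avoids this bookkeeping entirely.
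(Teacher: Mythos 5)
Your proof is correct, and it reaches the conclusion by a genuinely different mechanism than the paper, even though both arguments rest on the same underlying object: the canonical $M_N(\C)$-bimodular, state-preserving conditional expectation $E$ onto the copy of $M_N(\C)$ (the paper invokes it on the full free product, with $\mu\circ E=\omega*\mu$; you invoke it on the GNS image, which is the reduced free product as recalled in the preliminaries of Section \ref{Section4} --- the two formulations are equivalent). The difference is in how $E$ is exploited. The paper never uses the structural fact you isolate; instead it takes the explicit generators $\rho_{ij}(a)$ of $M_N(\C)'\cap(A*M_N(\C))$ from Proposition \ref{PropMNcase}, reduces by density to words $X=\rho_{i_1j_1}(a_1)\cdots\rho_{i_nj_n}(a_n)$, and checks $(\omega*\mu)(e_{st}X)=\mu(e_{st})(\omega*\mu)(X)$ by a direct computation in the eigenbasis of the density matrix $Q$ of $\mu$, using cyclicity of $\Tr$ and the identity $e_{j_ni_1}=\sum_r\lambda_re_{j_nr}e_{ri_1}$. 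You bypass generators, density and $Q$ entirely: bimodularity maps the relative commutant of $M_N(\C)$ into its center, which is trivial, so $E(\pi(x))=(\omega*\mu)(x)1$ and the identity drops out formally. Your route is shorter, more conceptual, and strictly more general --- it proves the lemma verbatim with $M_N(\C)$ replaced by any unital C*-algebra with trivial center and GNS-faithful state (here GNS-faithfulness, hence faithfulness of $\pi\vert_{M_N(\C)}$, is automatic by simplicity, as you note). What the paper's computation buys is self-containedness: bimodularity of $E$ is only ever applied to visibly factorized words, and nothing about the relative commutant needs to be known beyond the generating set already established in Proposition \ref{PropMNcase}. Two small points to make explicit in a final write-up: the existence of the state-preserving expectation onto a free factor of a reduced free product deserves a citation or a one-line construction (compression by the orthogonal projection onto $\overline{\pi(M_N(\C))\Omega}$), and one should note that this compression lands in the C*-algebra $\pi(M_N(\C))$ itself rather than some weak closure --- immediate here since $\pi(M_N(\C))$ is finite dimensional.
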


\begin{proof}
Let $Q\in M_N(\C)$ be a positive matrix such that $\mu={\rm Tr}(Q\cdot)$ and take $(e_{ij})_{ij}$ matrix units in $M_N(\C)$ diagonalizing $Q$ i.e. there exists positive $\lambda_r$ with $\sum\lambda_r=1$ and $Qe_{ij}=\lambda_ie_{ij}$. For $a\in A$ define $\rho_{ij}(a):=\sum_re_{ri}ae_{jr}\in A*M_N(\C)$. By the proof of Proposition \ref{PropMNcase}, $\Ucal:=M_N(\C)'\cap(A*M_N(\C))$ is the sub-C*-algebra of $A*M_N(\C)$ generated by:
$$\{\rho_{ij}(a)\,:\,a\in A,\,1\leq i,j\leq N\}.$$
Note that the linear span of $\{\rho_{i_1j_1}(a_1)\dots\rho_{i_nj_n}(a_n)\,:\,n\geq 1,\,1\leq i_s,j_s\leq N,\,a_s\in A\}$ is a unital $*$-subalgebra of $\mathcal{U}$, hence it is dense. To ease the notation we write $\omegatilde:=\omega*\mu$. It suffices to prove that $\widetilde{\omega}(e_{st}X)=\mu(e_{st})\widetilde{\omega}(X)$ for $X=\rho_{i_1j_1}(a_1)\dots\rho_{i_nj_n}(a_n)$. Note that
$$X=\sum_{r=1}^Ne_{ri_1}a_1e_{j_1i_2}a_2\dots a_ne_{j_nr}$$
so that $e_{st}X=e_{si_1}a_1e_{j_1i_2}a_2\dots a_ne_{j_nt}$. Let $E\,:\,A*M_N(\C)\rightarrow M_N(\C)$ be the canonical conditional expectation onto $M_N(\C)$. We have:
$$
\widetilde{\omega}(e_{st}X)=\mu\circ E(e_{si_1}a_1e_{j_1i_2}a_2\dots a_ne_{j_nt})=\mu(e_{si_1}E(X')e_{j_nt}),
$$
where $X':=a_1e_{j_1i_2}a_2\dots a_n$. Hence,
\begin{eqnarray*}
\widetilde{\omega}(e_{st}X)&=&{\rm Tr}(Qe_{si_1}E(X')e_{j_nt})
={\rm Tr}(e_{j_nt}Qe_{si_1}E(X'))=\delta_{s,t}\lambda_s{\rm Tr}(e_{j_ni_1}E(X'))\\
&=&\mu(e_{st}){\rm Tr}(e_{j_ni_1}E(X')).
\end{eqnarray*}
Writing $e_{j_ni_1}=\sum_r\lambda_re_{j_nr}e_{ri_1}$ we get:
\begin{eqnarray*}
\widetilde{\omega}(e_{st}X)&=&\mu(e_{st})\sum_r\lambda_r{\rm Tr}(e_{j_nr}e_{ri_1}E(X'))=\mu(e_{st})\sum_r\lambda_r{\rm Tr}(e_{ri_1}E(X')e_{j_nr})\\
&=&\mu(e_{st})\sum_r{\rm Tr}(Qe_{ri_1}E(X')e_{j_nr})=\mu(e_{st}){\rm Tr}(QE(X))=\mu(e_{st})\widetilde{\omega}(X).
\end{eqnarray*}
\end{proof}

\subsubsection{The one-block case}
We first consider the case $\U_N(A)=\U(A,M_N(\C))$. Fix states $\omega\in A^*$ and $\mu\in M_N(\C)^*$ and denote $A^\circ:={\rm ker}(\omega)$, $M_N(\C)^\circ:={\rm ker}(\mu)$. Let $\rho\,:\, A\rightarrow M_N(\C)\ot\U_N(A)$ be the universal quantum homomorphism from $A$ to $M_N(\C)$. An operator $X\in M_N(\C)\ot\Ucal_N(A)$ will be called \textit{reduced} if it is an alternating product of elements in $\rho(A^\circ)$ and in $M_N(\C)^\circ\ot 1$. The number of alternating factors in $X$ will be called the \textit{length} of $X$.

\vspace{0.2cm}

\noindent The following Proposition is a direct consequence of Lemma \ref{LemDualState} and Proposition \ref{PropMNcase}.

\begin{proposition}\label{PropState}
There exists a unique state $\widetilde{\omega}\in\Ucal_N(A)^*$ such that $(\mu\ot\widetilde{\omega})(X)=0$ for all reduced operator $X\in M_N(\C)\ot \U_N(A)$ and $(\mu\ot\widetilde{\omega})\circ\rho=\omega$.
\end{proposition}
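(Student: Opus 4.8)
The plan is to prove existence and uniqueness separately, leveraging the isomorphism $\psi : A * M_N(\C) \to M_N(\C) \ot \U_N(A)$ from Remark \ref{RmkIso} to transport the problem into the free product, where the free product state $\omega * \mu$ is already at hand. The key observation is that under $\psi$ the operator $M_N(\C) \ot \U_N(A)$ is identified with $A * M_N(\C)$, sending $1 \ot \rho_{ij}(a)$ to $\sum_r e_{ri} a e_{jr}$ and $b \ot 1$ to $b$, so that $\mu \ot \widetilde{\omega}$ on $M_N(\C) \ot \U_N(A)$ should correspond, up to the prescribed normalization, to the free product state $\omegatilde := \omega * \mu$ on $A * M_N(\C)$.

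First I would establish existence by defining $\widetilde{\omega} \in \U_N(A)^*$ directly via the free product state: for $x \in \U_N(A) = M_N(\C)' \cap (A * M_N(\C))$, set $\widetilde{\omega}(x) := \omegatilde(x)$, viewing $x$ inside $A * M_N(\C)$. This is manifestly a state. The content is then to verify the two defining conditions. For the first condition, I would take a reduced operator $X \in M_N(\C) \ot \U_N(A)$, that is, an alternating product of factors from $\rho(A^\circ)$ and $M_N(\C)^\circ \ot 1$, and compute $(\mu \ot \widetilde{\omega})(X)$. Applying the identification $\psi^{-1}$, the factors $b \ot 1$ with $b \in M_N(\C)^\circ$ map to elements of $M_N(\C)^\circ \subset (A*M_N(\C))$ of zero free-expectation, while a factor $\rho(a)$ with $a \in A^\circ$ expands as $\sum_{ij} e_{ij} \ot \rho_{ij}(a)$, which one checks reduces to the element $a \in A^\circ$ under the relevant contractions; the alternating structure of $X$ is thus sent to an alternating product of centered elements from $A^\circ$ and $M_N(\C)^\circ$, which by the very definition of the free product state has $\omegatilde$-value zero. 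The second condition, $(\mu \ot \widetilde{\omega}) \circ \rho = \omega$, follows because $(\mu \ot \widetilde{\omega})(\rho(a)) = \sum_{ij} \mu(e_{ij}) \widetilde{\omega}(\rho_{ij}(a))$ and a short computation using $E(\rho_{ij}(a)) = \delta_{ij}\omega(a)1$ together with $\sum_i \mu(e_{ii}) = 1$ collapses the sum to $\omega(a)$.

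For uniqueness, I would argue that the two stated conditions determine $\widetilde{\omega}$ on a dense subset. By Proposition \ref{PropMNcase} and the density statement in its proof, $\U_N(A)$ is generated by the $\rho_{ij}(a)$, and the linear span of products $\rho_{i_1 j_1}(a_1) \cdots \rho_{i_n j_n}(a_n)$ is dense. It then suffices to show that $\widetilde{\omega}$ is determined on such products by the two conditions. This is where Lemma \ref{LemDualState} does the real work: the relation $(\omega * \mu)(bx) = \mu(b)(\omega * \mu)(x)$ lets one strip off matrix-unit factors, and combined with the vanishing on reduced operators one recursively reduces any product to shorter ones, much as in the standard freeness argument. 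Concretely, one writes a general element as a $\mu \ot \widetilde{\omega}$-centered reduced operator plus correction terms of strictly shorter length whose values are already pinned down by induction.

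The main obstacle I anticipate is the bookkeeping in translating between the two pictures — matching the reduced operators in $M_N(\C) \ot \U_N(A)$ (alternating products of $\rho(A^\circ)$ and $M_N(\C)^\circ \ot 1$) with genuinely alternating centered words in the free product $A * M_N(\C)$. The subtlety is that a single factor $\rho(a)$ is not itself a simple tensor but a sum $\sum_{ij} e_{ij} \ot \rho_{ij}(a)$ entangling matrix indices, so one must carefully track how slicing by $\mu$ in the $M_N(\C)$-leg interacts with the reduced word structure; the dual-state relation of Lemma \ref{LemDualState} is precisely the tool that controls this entanglement, so the proof hinges on applying it cleanly. Everything else is a routine, if slightly tedious, verification, which is presumably why the authors state this as a direct consequence of the preceding lemma and proposition.
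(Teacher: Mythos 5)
Your architecture is the right one --- and it is in fact the paper's: transport the problem through the isomorphism $\psi\,:\,A*M_N(\C)\rightarrow M_N(\C)\ot\U_N(A)$ of Remark \ref{RmkIso}, take $\widetilde{\omega}$ to be the restriction of the free product state $\omega*\mu$ to $\U_N(A)=M_N(\C)'\cap(A*M_N(\C))$, and use the vanishing of $\omega*\mu$ on alternating centered words. But you have placed Lemma \ref{LemDualState} in the wrong half of the proof, and this leaves a genuine gap in your existence argument. When you ``compute $(\mu\ot\widetilde{\omega})(X)$'' for a reduced operator $X$ by ``applying the identification $\psi^{-1}$'', you are tacitly assuming that the product functional $\mu\ot\widetilde{\omega}$ pulls back under $\psi$ to the free product state, i.e.\ that $(\mu\ot\widetilde{\omega})\circ\psi=\omega*\mu$. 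This is not automatic: a functional on a tensor product is not determined by its two marginals, and $X$ is not a simple tensor, so knowing that $\psi^{-1}(X)$ is an alternating centered word gives you the value of $(\omega*\mu)(\psi^{-1}(X))$, not of $(\mu\ot\widetilde{\omega})(X)$. The identity $(\mu\ot\widetilde{\omega})\circ\psi=\omega*\mu$ is precisely what Lemma \ref{LemDualState} provides: it says $(\omega*\mu)(bx)=\mu(b)\widetilde{\omega}(x)$ for all $b\in M_N(\C)$, $x\in\U_N(A)$, i.e.\ the two functionals agree on $\psi^{-1}(M_N(\C)\odot\U_N(A))$; linearity, density of the algebraic tensor product, and boundedness of states then give agreement everywhere. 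So the lemma is the engine of the existence half and cannot be deferred. (Your verification of $(\mu\ot\widetilde{\omega})\circ\rho=\omega$, by contrast, is fine as written, since $\rho(a)$ is a finite sum of elementary tensors.)

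Symmetrically, your uniqueness argument misuses the lemma. There $\widetilde{\omega}$ is an \emph{arbitrary} state satisfying the two conditions, whereas Lemma \ref{LemDualState} is a statement about the specific functional $\omega*\mu$; you cannot ``strip off matrix-unit factors'' from $(\mu\ot\widetilde{\omega})$-values of an unknown candidate using a relation proved only for $\omega*\mu$. Fortunately, uniqueness needs no recursion at all: $\psi^{-1}$ carries the reduced operators bijectively onto the alternating centered words of $A*M_N(\C)$, and the linear span of $1$ together with these words is the whole unital $*$-algebra generated by $A$ and $M_N(\C)$ (recenter each factor and induct on length), hence dense. Therefore the span of $1\ot 1$ and the reduced operators is dense in $M_N(\C)\ot\U_N(A)$, so the two conditions together with unitality determine the bounded functional $\mu\ot\widetilde{\omega}$ completely, and $\widetilde{\omega}(x)=(\mu\ot\widetilde{\omega})(1\ot x)$ then recovers $\widetilde{\omega}$. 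With these two repairs --- Lemma \ref{LemDualState} moved into the existence step, and a plain density argument for uniqueness --- your proof becomes exactly the ``direct consequence'' the paper has in mind.
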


\noindent Let $\U_{\omega,\mu}(A)$ be the C*-algebra generated by the GNS construction of $\widetilde{\omega}$ i.e. $\U_{\omega,\mu}(A):=\pi_{\widetilde{\omega}}(\U_N(A))$, where $(H_{\widetilde{\omega}},\pi_{\widetilde{\omega}},\xi_{\widetilde{\omega}})$ is the GNS construction of $\widetilde{\omega}$. We still denote by $\widetilde{\omega}\in\U_{\omega,\mu}(A)^*$ the GNS-faithful state defined by  $\widetilde{\omega}:=\langle\cdot\xi_{\widetilde{\omega}},\xi_{\widetilde{\omega}}\rangle$. Write $\widetilde{\rho}:=(\id\ot\pi_{\widetilde{\omega}})\rho\,:\,A\rightarrow M_N(\C)\ot\U_{\omega,\mu}(A)$ and note that $\U_{\omega,\mu}(A)$ is generated by the coefficients of $\widetilde{\rho}$ and we have $(\mu\ot\widetilde{\omega})\circ\widetilde{\rho}=\omega$ and $(\mu\ot\widetilde{\omega})((\id\ot\pi_{\omegatilde})(X))=0$ for every reduced operator $X\in M_N(\C)\ot\U_N(A)$.

\vspace{0.2cm}

\noindent Let $(H_\omega,\pi_\omega,\xi_\omega)$ be the GNS-construction of $\omega$ and $A_\omega:=\pi_\omega(A)$. We still denote by $\omega$ the GNS-faithful state on $A_\omega$ defined by $\omega:=\langle\cdot\xi_\omega,\xi_\omega\rangle$. Note that any state on $M_N(\C)$ is GNS-faithful.

\vspace{0.2cm}

\noindent Recall that a state $\omega\in A^*$ is called \textit{diffuse} if its normal extension to $A^{**}$, still denoted $\omega$, satisfies $\omega(p)=0$ for all minimal projection $p\in A^{**}$.

\begin{theorem}\label{THMReducedMN}
There exists a unique isomorphism $\psi_\omega\,:\,A_\omega\underset{r}{*}M_N(\C)\rightarrow M_N(\C)\ot\UNAomega$ such that $\psi_\omega\circ\pi_\omega=\rhotilde$ and $\psi_\omega(b)=b\ot 1$ for all $b\in M_N(\C)$, where the reduced free product is with respect to the GNS-faithful states $\mu\in M_N(\C)^*$ and $\omega\in (A_\omega)^*$. Moreover,
\begin{enumerate}
\item The C*-algebras $\U_{\omega,\mu}(A)$ and $\U_N(A_\omega)$ are $KK$-equivalent.
    \item If $A$ is exact then $\U_{\omega,\mu}(A)$ is exact. The converse holds when $\omega$ is GNS-faithful.
    \item Suppose that $\omega$ is GNS-faithful. If $\U_{\omega,\mu}(A)$ is nuclear then $A$ is nuclear and the converse holds if $\omega$ or $\mu$ is a pure state. If both $\omega$ and $\mu$ are pure then $\omegatilde$ is pure.
    \item If $N\geq 2$, $\omega$ and $\mu$ are faithful traces on $A$ and $M_N(\C)$ respectively and $\omega$ is diffuse then $\U_{\omega,\mu}(A)$ is simple with unique trace and has stable rank $1$.
    \item If $\omega$ and $\mu$ are faithful traces on $A$ and $M_N(\C)$ respectively, $N\geq 2$ and $\omega_*(K_0(A))\subset\frac{1}{N}\Z$, where $\omega_*\,:\,K_0(A)\rightarrow\R$ is the group homomorphism induced by $\omega$, then $\U_{\omega,\mu}(A)$ has no non-trivial projection.
\end{enumerate}
\end{theorem}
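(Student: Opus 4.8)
The plan is to reduce every assertion to a single structural identification, namely the isomorphism $\psi_\omega$, which is the \emph{reduced} counterpart of the algebraic isomorphism $\rho_N$ of Remark~\ref{RmkIso}. To produce $\psi_\omega$ I would start from $\rho_N\,:\,A*M_N(\C)\to M_N(\C)\ot\U_N(A)$, $a\mapsto\rho(a)$, $b\mapsto b\ot 1$, which is a $*$-isomorphism by Remark~\ref{RmkIso}. The crucial observation is that $\rho_N$ intertwines the free product state $\omega*\mu$ with the tensor state $\mu\ot\omegatilde$: the two states $(\mu\ot\omegatilde)\circ\rho_N$ and $\omega*\mu$ agree on $A$ (both restrict to $\omega$) and on $M_N(\C)$ (both restrict to $\mu$), and under $\rho_N$ the alternating products of $A^\circ$ and $M_N(\C)^\circ$ are carried precisely to reduced operators, on which both states vanish; this is exactly the content of Proposition~\ref{PropState} together with Lemma~\ref{LemDualState}. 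Passing to GNS constructions, the GNS triple of $\omega*\mu$ on $A*M_N(\C)$ generates, by definition, the reduced free product $A_\omega\underset{r}{*}M_N(\C)$ (with respect to $\omega$ and $\mu$), while that of $\mu\ot\omegatilde$ is $\pi_\mu\ot\pi_{\omegatilde}$ and generates $\pi_\mu(M_N(\C))\ot\UNAomega$; since $M_N(\C)$ is simple, $\pi_\mu$ is faithful, so this equals $M_N(\C)\ot\UNAomega$. As $\rho_N$ preserves the states, it induces a unitary of GNS spaces intertwining the two representations, hence the isomorphism $\psi_\omega\,:\,A_\omega\underset{r}{*}M_N(\C)\to M_N(\C)\ot\UNAomega$; checking on generators gives $\psi_\omega\circ\pi_\omega=\rhotilde$ and $\psi_\omega(b)=b\ot 1$, and these conditions determine $\psi_\omega$ uniquely.

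With $\psi_\omega$ in hand, all permanence statements transfer through the identity $M_N(\C)\ot\UNAomega\cong A_\omega\underset{r}{*}M_N(\C)$, using that $M_N(\C)\ot(-)$ both preserves and reflects exactness, nuclearity, simplicity, uniqueness of trace, stable rank one and $KK$-class. For (1), applying Remark~\ref{RmkIso} to $A_\omega$ gives $M_N(\C)\ot\U_N(A_\omega)\cong A_\omega*M_N(\C)$; since the canonical surjection from the full onto the reduced free product is a $KK$-equivalence (the reduced analogue of the six-term sequence used in Corollary~\ref{CorKtheory1}, cf.\ \cite{FG18}) and $KK$ is Morita invariant, one chains $\UNAomega\sim_{KK}A_\omega\underset{r}{*}M_N(\C)\sim_{KK}A_\omega*M_N(\C)\sim_{KK}\U_N(A_\omega)$. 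For (2), if $A$ is exact then so is its quotient $A_\omega$, hence so is the reduced free product $A_\omega\underset{r}{*}M_N(\C)$ (exactness being preserved under reduced free products), and therefore so is its tensor subalgebra $\UNAomega$; conversely, when $\omega$ is GNS-faithful we have $A_\omega=A$, so $A$ embeds as a C*-subalgebra of the exact algebra $A\underset{r}{*}M_N(\C)\cong M_N(\C)\ot\UNAomega$ and is thus exact.

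For (3), assuming $\omega$ GNS-faithful so $A_\omega=A$, nuclearity of $\UNAomega$ yields nuclearity of $A\underset{r}{*}M_N(\C)$, and the canonical $\omega*\mu$-preserving conditional expectation $E\,:\,A\underset{r}{*}M_N(\C)\to A$ transports the completely positive approximation property down to $A$ (compose the approximating maps with the inclusion and $E$, using $E|_A=\id_A$), so $A$ is nuclear. The converse direction is the main obstacle of the whole theorem: a reduced free product of nuclear algebras is essentially never nuclear (e.g.\ $C^*_r(\F_2)$), and it is precisely the purity hypothesis that rescues the argument. When $\mu$ is pure I would pass to the corner $p\big(A_\omega\underset{r}{*}M_N(\C)\big)p\cong\UNAomega$ cut by a minimal projection $p$ with $\mu=\Tr(p\cdot)$, where the known compression results for reduced free products collapse the matricial factor and exhibit $\UNAomega$ as obtained from the nuclear $A_\omega$ by nuclearity-preserving operations; the case $\omega$ pure is symmetric. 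Finally, when both $\omega$ and $\mu$ are pure, the free product state $\omega*\mu$ is pure (its GNS representation is irreducible), and transporting this fact through $\psi_\omega$ and the corner shows that $\omegatilde$ is pure.

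For (4) and (5) one may assume $\omega$ is a faithful diffuse trace, hence GNS-faithful, so $A_\omega=A$ and $M_N(\C)\ot\UNAomega\cong A\underset{r}{*}M_N(\C)$ is a tracial reduced free product with trace $\tau=\omega*\mu$. For (4), diffuseness of $\omega$ together with $N\ge 2$ meets the non-degeneracy hypotheses of the known simplicity and unique-trace theorems for tracial reduced free products, and the stable-rank-one results for such free products apply; transporting through $M_N(\C)\ot(-)$ gives simplicity, uniqueness of trace and stable rank one for $\UNAomega$. For (5), part (4) already yields that $\UNAomega$ is simple with a unique, hence faithful, trace $\omegatilde$; it remains to compute the range of $\omegatilde$ on $K_0$. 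Realizing $\UNAomega$ as the full corner $p\big(A\underset{r}{*}M_N(\C)\big)p$ with $p$ rank-one and $\tau(p)=\tfrac1N$, the corner normalization gives $\omegatilde=N\,\tau|_{p(\cdots)p}$, so that $\omegatilde_*(K_0(\UNAomega))=N\,\tau_*\big(K_0(A\underset{r}{*}M_N(\C))\big)$. Feeding in the $K_0$-computation of Corollary~\ref{CorKtheory1} (valid for the reduced free product by the $KK$-equivalence of part (1)) with $\tau_*[1_A]=1$, $\tau_*[e_{11}]=\tfrac1N$ and the hypothesis $\omega_*(K_0(A))\subseteq\tfrac1N\Z$, one gets $\tau_*\big(K_0(A\underset{r}{*}M_N(\C))\big)=\tfrac1N\Z$ and hence $\omegatilde_*(K_0(\UNAomega))=\Z$. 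Any projection $q\in\UNAomega$ then satisfies $\omegatilde(q)=\omegatilde_*[q]\in\Z\cap[0,1]=\{0,1\}$, and faithfulness of $\omegatilde$ forces $q\in\{0,1\}$, giving the absence of non-trivial projections.
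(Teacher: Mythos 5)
Your construction of $\psi_\omega$ and your arguments for (1), (2), (4) and the $K$-theoretic core of (5) follow essentially the same route as the paper: the full-free-product picture of Remark \ref{RmkIso}, the state intertwining of Proposition \ref{PropState} and Lemma \ref{LemDualState}, the chain Morita equivalence / full-to-reduced $KK$-equivalence / Morita equivalence for (1), Dykema's exactness theorem \cite{Dy04} together with passage to quotients and subalgebras for (2), the known simplicity--unique trace--stable rank results for tracial reduced free products for (4) (the paper uses \cite[Theorem 9.1]{Th22} and \cite[Theorem 3.3]{Ri83}), and the pairing of the trace with $K_0$ for (5). The genuine gap is in the converse direction of (3), which you yourself flag as the main obstacle and then do not resolve. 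The required input is a specific known theorem, cited in the paper as \cite[Theorem 4.8.7]{BO08}: a reduced free product of nuclear C*-algebras with respect to GNS-faithful states, at least one of which is pure, is nuclear. Your proposed substitute --- compressing by a minimal projection $p\in M_N(\C)$ and invoking ``known compression results'' --- does not do the job: via $\psi_\omega$ the corner $p\bigl(A_\omega\underset{r}{*}M_N(\C)\bigr)p$ is literally $\C p\ot\UNAomega\cong\UNAomega$, so passing to it is a tautology rather than a reduction, and there is no compression theorem exhibiting this corner as obtained from $A_\omega$ by nuclearity-preserving operations; that assertion is exactly the statement to be proved. Moreover your claim that ``the case $\omega$ pure is symmetric'' cannot be implemented by a corner argument at all: a GNS-faithful pure state on $A$ only says that $A$ is primitive, and $A$ need contain no projection whatsoever. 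The symmetry lives in the abstract free-product nuclearity theorem, not in the compression.

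A second, repairable, error occurs in (5): you reduce to the case where ``$\omega$ is a faithful diffuse trace'' and then extract faithfulness of $\omegatilde$ from the simplicity/unique-trace conclusion of (4). But diffuseness is not among the hypotheses of (5) --- the statement must cover, for instance, $A=\C^2$ with a faithful trace, which is never diffuse --- so (4) is not available there. The fix is easy and is what the paper implicitly does: $\omegatilde$ is a tracial state which is GNS-faithful by construction, and a GNS-faithful trace is faithful (its left kernel is a two-sided ideal); alternatively, the free product trace $\omega*\mu$ is faithful on $A\underset{r}{*}M_N(\C)$ and faithfulness transports through the state-preserving isomorphism $\psi_\omega$. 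With that repair, your corner-normalization computation of $\omegatilde_*\bigl(K_0(\UNAomega)\bigr)$ coincides with the paper's argument, which applies $\omega*\mu$ directly to the projection $\psi_\omega^{-1}(e_{11}\ot p)$. Finally, a small slip: the $KK$-equivalence between full and reduced free products used in (1) and (5) comes from \cite{FG20}, not \cite{FG18}.
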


\begin{proof}
Consider the isomorphism $\psi\,:\,A*M_N(\C)\rightarrow M_N(\C)\ot\UNA$ from Remark \ref{RmkIso} so that we get a surjective $*$-homomorphism $\psi_\omega:=(\id\ot\pi_{\omegatilde})\psi\,:\,A*M_N(\C)\rightarrow M_N(\C)\ot\UNAomega$. The state $\mu\ot\omegatilde\in (M_N(\C)\ot\UNAomega)^*$ is GNS-faithful and, by Proposition \ref{PropState}, $\psi_\omega$ intertwines the free product state $\omega*\mu\in (A*M_N(\C))^*$ and the state $\mu\ot\omegatilde\in (M_N(\C)\ot\UNAomega)^*$ so $M_N(\C)\ot\UNAomega$ is the C*-algebra of the GNS-construction of the free product state $\omega*\mu$ and $\psi_\omega$ factorizes to an isomorphism $\psi_\omega\,:\,A_\omega\underset{r}{*}M_N(\C)\rightarrow M_N(\C)\ot\UNAomega$ satisfying the claimed properties. The uniqueness is clear.

\vspace{0.2cm}

\noindent(1). The $KK$-equivalence is obtained as the composition (i.e. Kasparov product) of the following $KK$-equivalences. First, $\U_N(A_\omega)$ is Morita equivalent to $M_N(\C)\ot\U_N(A_\omega)$ which is isomorphic to $A_\omega*M_N(\C)$. Then, from \cite{FG20}, the canonical surjection $\pi\,:\, A_\omega*M_N(\C)\rightarrow A_\omega\underset{r}{*} M_N(\C)$ is known to be a $KK$-equivalence. Next, $A_\omega\underset{r}{*} M_N(\C)$ is isomorphic to $M_N(\C)\ot\U_{\omega,\mu}(A)$ which is Morita equivalent to $\U_{\omega,\mu}(A)$.

\vspace{0.2cm}

\noindent$(2)$. We first show the equivalence when $\omega$ is GNS-faithful. Note that, since a subalgebra of an exact C*-algebra is exact, the result of \cite{Dy04} implies that $A$ is exact if and only if $A\underset{r}{*} M_N(\C)$ is exact. Moreover, for any C*-algebra $D$, it is clear that $D$ is exact if and only if $M_N(\C)\ot D$ is exact. Since $A\underset{r}{*} M_N(\C)\simeq M_N(\C)\ot\U_{\omega,\mu}(A)$, this concludes the proof of $(2)$ in the GNS faithful case. Take now any state $\omega\in A^*$ and assume that $A$ is exact so that $A_\omega$ also is being of quotient of $A$. The state $\omega$ viewed on $A_\omega$ being GNS-faithful, we can conclude from the first part of the proof and since we clearly have $\UNAomega=\U_{\omega,\mu}(A_\omega)$, we conclude that $\UNAomega$ is exact.

\vspace{0.2cm}

\noindent$(3)$. We first note that, while nuclearity does not pass to subalgebras, it is easy to check that if $C\subset D$ is a subalgebra with a conditional expectation $E\,:\,D\rightarrow C$ then $D$ nuclear implies that $C$ is nuclear.

\vspace{0.2cm}

\noindent Fix $\omega\in A^*$ a GNS-faithful state and assume that either $\omega$ or $\mu$ is pure. It is known that \cite[Theorem 4.8.7]{BO08} $A\underset{r}{*}M_N(\C)\simeq M_N(\C)\ot\U_{\omega,\mu}(A)$ is nuclear, whenever $A$ is nuclear. Since the subalgebra $\U_{\omega,\mu}(A)\subset M_N(\C)\ot\U_{\omega,\mu}(A)$ has a conditional expectation we deduce that $\U_{\omega,\mu}(A)$ is nuclear. Conversely, if $\U_{\omega,\mu}(A)$ is nuclear then, since $M_N(\C)$ is finite dimensional, $M_N(\C)\ot \U_{\omega,\mu}(A)\simeq A\underset{r}{*}M_N(\C)$ also is and because the subalgebra $A\subset A\underset{r}{*}M_N(\C)$ has a conditional expectation, we deduce that $A$ is nuclear.

\vspace{0.2cm}

\noindent Assume now that $\omega$ and $\mu$ are both pure (and $\omega$ is still GNS-faithful). By Voiculescu's characterisation of the commutant of the reduced free product, we deduce that the free product state $\omega*\mu\in(A*M_N(\C))^*$ is pure. Using the state-preserving isomorphism $\psi_\omega$, we deduce that $\mu\ot\omegatilde\in (M_N(\C)\ot\UNAomega)^*$ is pure. A direct convexity argument implies that $\omegatilde$ itself is pure.

\vspace{0.2cm}

\noindent$(4)$. We use \cite[Theorem 9.1]{Th22} and the fact that, for any unital C*-algebra $D$, $M_N(\C)\ot D$ simple with unique trace and stable rank $1$ implies $D$ simple with unique trace and stable rank $1$ (for the stable rank statement see \cite[Theorem 3.3]{Ri83}).

\vspace{0.2cm}

\noindent$(5)$. Note that $\mu$ is the unique tracial state on $M_N(\C)$ and the free product state $\chi:=\omega*\mu$ is a faithful trace on $A\underset{r}{*}M_N(\C)$. Using the exact sequence for the K-theory of reduced free products from \cite{FG20} we get $\chi_*(K_0(A\underset{r}{*}M_N(\C)))=\mu_*(K_0(M_N(\C)))+\omega_*(K_0(A))\subset\frac{1}{N}\Z+\frac{1}{N}\Z=\frac{1}{N}\Z$. Let $p\in \UNAomega$ be a projection. Since $\psi_\omega$ intertwines $\chi$ with ${\rm tr}\ot\omegatilde$ we have: $$\chi(\psi_\omega^{-1}( e_{11}\ot p))={\rm tr}(e_{11})\omegatilde(p)=\frac{1}{N}\omegatilde(p).$$
Since $\psi_\omega^{-1}( e_{11}\ot p)$ is a projection, we have $\chi(\psi_\omega^{-1}( e_{11}\ot p))\in\chi_*(K_0(A\underset{r}{*}M_N(\C)))\subset\frac{1}{N}\Z$.
It follows that $\omegatilde(p)\in\Z$ hence, $\omegatilde(p)\in\{0,1\}$. We conclude by faithfulness of $\omegatilde$.
\end{proof}

\subsubsection{The finite dimensional case}\label{SectionCstarFinitedim}

Assume that $B$ is finite dimensional and write
$$B=\bigoplus_{\kappa=1}^KM_{N_\kappa}(\C)$$
with canonical surjective unital $*$-homomorphisms $\chi_\kappa\,:\, B\rightarrow M_{N_\kappa}(\C)$. Let $A$ be any unital C*-algebra and write $\rho\,:\, A\rightarrow B\ot\Ucal(A,B)$ the universal quantum homomorphism from $A$ to $B$. It follows from Lemma \ref{RmkFree} that $\U(A,B)=\underset{\kappa=1}{\overset{K}{*}}\Ucal_{N_\kappa}(A)$.

\vspace{0.2cm}

\noindent Fix states $\omega\in A^*$, $\mu\in B^*$ and assume that $\mu$ is GNS-faithful. Write $\mu=\sum_{k=1}^N{\rm Tr}_\kappa(Q_\kappa\cdot)$, where ${\rm Tr}_\kappa$ is the unique trace on $M_{N_\kappa}(\C)$ such that ${\rm Tr}_\kappa(1)=N_\kappa$ and $Q_\kappa\in M_{N_\kappa}(\C)$ is positive non-zero. Define the state $\mu_\kappa:=\frac{1}{{\rm Tr}_\kappa(Q_\kappa)}{\rm Tr}_\kappa(Q_\kappa\cdot)\in M_{N_\kappa}(\C)^*$. Write $M_{N_\kappa}(\C)^\circ:={\rm ker}(\mu_k)$. Let $\widetilde{\omega}_\kappa\in\Ucal_{N_\kappa}(A)^*$ the state constructed in the one-block case from $\omega$ and $\mu_\kappa$ and write $\Ucal_{N_\kappa}(A)^\circ:={\rm ker}(\widetilde{\omega}_\kappa)$. Note that $\widetilde{\omega}_\kappa$ is tracial whenever $\omega$ and $\mu$ are.

\vspace{0.2cm}

\noindent Let us call an operator $x\in \Ucal(A,B)$ \textit{reduced} whenever $x$ is of the form $x=x_1\dots x_n$, for $n\geq 1$, $x_s\in \Ucal_{N_{\kappa_s}}(A)^\circ$ and $\kappa_s\neq\kappa_{s+1}$. From the free product decomposition of $\Ucal(A,B)$, there exists a unique state $\omegatilde\in\Ucal(A,B)^*$ such that $\omegatilde(x)=0$ for any reduced operator $x$.

\vspace{0.2cm}

\noindent The \textit{reduced} C*-algebra of quantum homomorphisms from $A$ to $B$ is the C*-algebra obtained by the GNS-construction of $\omegatilde$ and it is denoted by $\Ucal_{\omega,\mu}(A,B)$. By construction, $\Ucal_{\omega,\mu}(A,B)$ is the Voiculescu's reduced free product of the $\Ucal_{\omega,\mu_\kappa}(A)$, for $1\leq \kappa\leq K$, with respect to the GNS-faithful states $\omegatilde_\kappa\in\Ucal_{\omega,\mu_\kappa}(A)^*$. Let $N:=\sum_{\kappa=1}^KN_\kappa$.

\begin{theorem}
The following holds.
\begin{enumerate}
    %\item \textbf{write exact sequence in KK-theory}
    \item If $A$ is exact then $\Ucal_{\omega,\mu}(A,B)$ is exact and the converse holds when $\omega$ is GNS-faithful.
    \item Suppose that $\omega$ is GNS-faithful. If  $\U_{\omega,\mu}(A)$ is nuclear then $A$ is nuclear and the converse holds if both $\omega$ and $\mu$ are pure states. In that case, $\omegatilde\in\U_{\omega,\mu}(A,B)^*$ is also pure.
    \item If $\omega$ and $\mu$ are faithful traces, $\omega$ is diffuse and $N\geq 2$ then $\Ucal_{\omega,\mu}(A,B)$ is simple with unique trace and has stable rank $1$.
    
\end{enumerate}
\end{theorem}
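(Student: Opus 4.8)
The plan is to reduce the entire statement to the one-block case, Theorem~\ref{THMReducedMN}, by exploiting the identification
$\Ucal_{\omega,\mu}(A,B)=\underset{\kappa=1}{\overset{K}{*}}\Ucal_{\omega,\mu_\kappa}(A)$
as a Voiculescu reduced free product of the one-block algebras with respect to the GNS-faithful states $\omegatilde_\kappa$, combined with the standard permanence properties of reduced free products. Throughout I would use two structural facts about a reduced free product: each free factor $\Ucal_{\omega,\mu_\kappa}(A)$ is a unital C*-subalgebra, and it carries a $\omegatilde$-preserving conditional expectation onto it. Together with Theorem~\ref{THMReducedMN}, these let me transport each property between $A$ and the individual factors, and then between the factors and the whole free product.

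For $(1)$, if $A$ is exact then each $\Ucal_{\omega,\mu_\kappa}(A)$ is exact by Theorem~\ref{THMReducedMN}$(2)$, and since a reduced free product of exact C*-algebras is exact \cite{Dy04}, so is $\Ucal_{\omega,\mu}(A,B)$. Conversely, if $\Ucal_{\omega,\mu}(A,B)$ is exact then each factor $\Ucal_{\omega,\mu_\kappa}(A)$, being a C*-subalgebra, is exact, and when $\omega$ is GNS-faithful Theorem~\ref{THMReducedMN}$(2)$ forces $A$ to be exact. For $(2)$, assume $\omega$ GNS-faithful. If $\Ucal_{\omega,\mu}(A,B)$ is nuclear then, using the conditional expectation onto each free factor and the fact that nuclearity passes to subalgebras that are ranges of conditional expectations, each $\Ucal_{\omega,\mu_\kappa}(A)$ is nuclear, so $A$ is nuclear by Theorem~\ref{THMReducedMN}$(3)$. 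For the converse one assumes in addition that $\omega$ and $\mu$ are pure; since $\mu$ is GNS-faithful this already forces $K=1$, i.e. $B=M_N(\C)$, so the statement — including the purity of $\omegatilde$ — is exactly Theorem~\ref{THMReducedMN}$(3)$. (More generally, whenever every $\omegatilde_\kappa$ is pure one recovers nuclearity and purity of $\omegatilde$ by iterating the reduced-free-product nuclearity result \cite[Theorem~4.8.7]{BO08} and Voiculescu's commutant description.)

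For $(3)$ I would again work with the decomposition $\Ucal_{\omega,\mu}(A,B)=\underset{\kappa}{*}\Ucal_{\omega,\mu_\kappa}(A)$. When $K=1$ this is precisely Theorem~\ref{THMReducedMN}$(4)$, so the new content is the case $K\geq 2$. Each free factor carries the faithful tracial state $\omegatilde_\kappa$ and contains a copy of $(A_\omega,\omega)$: for $N_\kappa=1$ the factor is $A_\omega$ itself, whereas for $N_\kappa\geq 2$ it is already simple with unique trace and stable rank $1$. Since $\omega$ is diffuse, each factor is diffuse, and the hypothesis $N=\sum_\kappa N_\kappa\geq 2$ keeps the free product non-degenerate. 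I would then apply the simplicity, unique-trace and stable-rank-$1$ criterion for reduced free products exactly as in the one-block case (\cite[Theorem~9.1]{Th22}, together with \cite[Theorem~3.3]{Ri83} for the stable rank), checking that the diffuseness of $\omega$ supplies its hypotheses on every factor.

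The main obstacle is $(3)$. The exactness and nuclearity permanence statements are essentially formal once Theorem~\ref{THMReducedMN} is in hand, but the simplicity and stable-rank conclusions require the free-product criterion to apply across all blocks simultaneously. The delicate point is the presence of blocks with $N_\kappa=1$, where the corresponding factor is merely $A_\omega$ rather than one of the already-simple one-block algebras, so that only the global condition $N\geq 2$ together with the diffuseness of $\omega$ prevents degeneracy; I would need to verify that the relevant Haar-unitary / Avitzour-type condition holds in each factor and that the criterion of \cite[Theorem~9.1]{Th22} genuinely covers this multi-factor reduced free product rather than only a single amplification $D\underset{r}{*}M_N(\C)$.
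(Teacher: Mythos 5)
Your proposal is correct and, for parts $(1)$ and $(3)$, takes essentially the paper's route; the genuine difference is in part $(2)$. For $(1)$ the paper argues exactly as you do: Theorem \ref{THMReducedMN}$(2)$ for each block, \cite{Dy04} for exactness of the reduced free product, and passage to C*-subalgebras for the converse. For $(3)$, the verification you flag as the delicate point is resolved in the paper by precisely the regrouping you hint at: write $\Ucal_{\omega,\mu}(A,B)=D\underset{r}{*}\Ucal_{\omega,\mu_K}(A)$ with $D$ the reduced free product of the first $K-1$ blocks, note that $D$ is infinite dimensional (diffuseness of $\omega$ forces ${\rm dim}(A)=\infty$), and obtain diffuseness of $\omegatilde_K$ either trivially (if $N_K=1$, then $\omegatilde_K=\omega$) or from simplicity with unique trace of $\Ucal_{\omega,\mu_K}(A)$ via \cite[Corollary 5.6]{Th22}; a single application of the two-factor criterion \cite[Theorem 9.1]{Th22} then finishes, so no multi-factor version of that theorem is needed, and the claim ``each factor is diffuse'' that you left unverified is supplied exactly by \cite[Corollary 5.6]{Th22}. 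Where you genuinely diverge is $(2)$: the paper proves the converse by induction on $K$ using \cite[Theorem 4.8.7]{BO08}, whereas you observe that purity of $\mu$ combined with the standing GNS-faithfulness assumption on $\mu$ forces $K=1$ (a pure state on $\bigoplus_\kappa M_{N_\kappa}(\C)$ is supported on a single block, so its GNS representation annihilates the others), collapsing the statement to Theorem \ref{THMReducedMN}$(3)$. This reading is legitimate and shorter than the paper's argument; the paper's induction is really aimed at the weaker hypothesis that each block state $\mu_\kappa$ is pure, which is the generalization you note in your parenthetical.
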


\begin{proof}
$(1)$. Since $\Ucal_{\omega,\mu}(A,B)$ is the Voiculescu's reduced amalgamated free product of the $\Ucal_{\omega,\mu_\kappa}(A)$, for $1\leq\kappa\leq K$, with respect to the GNS-faithful states $\widetilde{\omega}_\kappa$, we can argue exactly as in the proof of Theorem \ref{THMReducedMN}, assertion $(2)$ by using assertion $(2)$ of Theorem \ref{THMReducedMN} and \cite{Dy04}.

\vspace{0.2cm}

\noindent$(2)$. Follows from Theorem \ref{THMReducedMN} and \cite[Theorem 4.8.7]{BO08} by induction on $K$.

\vspace{0.2cm}

\noindent$(3)$. If $K=1$, this is Theorem \ref{THMReducedMN}, assertion $(4)$. If $K\geq 2$ write $\Ucal_{\omega,\mu}(A,B)=D\underset{r}{*}\Ucal_{\omega,\mu_K}(A)$, where $D\neq\C$. Since $\omega$ is diffuse, $A$ has to be infinite dimensional and so is $D$. Here, the reduced free product is with respect to $\tau$ and $\widetilde{\omega}_K$ which are both faithful tracial states. If $N_K=1$ then $\Ucal_{\omega,\mu_K}(A)=A$ and $\omegatilde_K=\omega$ is diffuse and if $N_K\geq 2$ then $\Ucal_{\omega,\mu_K}(A)$ is simple by Theorem \ref{THMReducedMN} so, by \cite[Corollary 5.6]{Th22}, the trace $\widetilde{\omega}_K$ is diffuse. In both cases, we can apply \cite[Theorem 9.1]{Th22}.
\end{proof}

\subsection{The von Neumann algebra}

\noindent Let $\omega$ be a faithful state on $A$ and view $A\subset\Lcal({\rm L}^2(A,\omega))$, where $({\rm L}^2(A,\omega),\id,\xi_\omega)$ is the GNS construction of $\omega$. Define $A''\subset\Lcal({\rm L}^2(A,\omega))$ as the von Neumann subalgebra of $\Lcal({\rm L}^2(A,\omega))$ generated by $A$ and $\omega''$ be the normal faithful state on $A''$ defined by $\omega''(x):=\langle x \xi_\omega,\xi_\omega\rangle$.

\vspace{0.2cm}

\noindent By \cite[Lemma 3.2]{Th22}, the state $\omega$ on $A$ is diffuse if and only if the von Neumann algebra $A''\subset\Lcal({\rm L}^2(A,\omega))$ is diffuse.

\subsubsection{The one-block case}

\noindent Fix a faithful state $\mu\in M_N(\C)^*$ and define the von Neumann algebra $\U_{\omega,\mu}(A)''$ as the von Neumann algebra generated by $\UNAomega$ in the GNS-construction of $\omegatilde$. We still denote by $\widetilde{\omega}$ the canonical normal faithful state on $\U_{\omega,\mu}(A)''$ extending $\widetilde{\omega}$ on $\UNAomega$.

\begin{remark}
For $Q\in M_N(\C)$ a strictly positive matrix, we use the notation ${\rm Sd}(Q)\subset\R^*_+$ for the closed multiplicative subgroup of $\R^*_+$ generated by $\{\frac{r}{s}\,:\,r,s\in{\rm Sp}(Q)\}$, where ${\rm Sp}(Q)$ denotes the spectrum of $Q$. We also write, for $A\subset\R^*_+$, $A^\perp:=\{t\in\R\,:\,a^{it}=1\text{ for all }a\in A\}$. Note that $A^\perp=\overline{\langle A\rangle}^\perp$, where $\overline{\langle A\rangle}$ denotes the closed subgroup of $\R^*_+$ generated by $A$. Finally, for $A\subset\R^*_+$, we use the notation $\tau(A)$ for the smallest topology on $\R$ making the maps $(t\mapsto a^{it})$ continuous for all $a\in A$. Note that, for any topology $\tau$ on $\R$, the set for $a\in\R^*_+$ such that $(t\mapsto a^{it})$ is $\tau$-continuous is a closed subgroup of $\R^*_+$ (for the usual topology on $\R^*_+$). Hence, $\tau(A)=\tau(\overline{\langle A\rangle})$. We use the notation $\tau(Q):=\tau({\rm Sd}(Q))$.
\end{remark}

\begin{theorem}\label{VNMN}
Let $\omega\in A^*$, $\mu={\rm Tr}(Q\cdot)\in M_N(\C)^*$ be faithful states. There exists a unique unital normal $*$-isomorphism $\psi_\omega''\,:\,A''*M_N(\C)\rightarrow M_N(\C)\ot\U_{\omega,\mu}(A)''$ extending $\psi_\omega$. Moreover,
\begin{enumerate}
\item If $\omega$ is diffuse and $N\geq 2$ then $\U_{\omega,\mu}(A)''$ is a non-amenable, full and prime factor of type ${\rm II}_1$ or of type ${\rm III}_\lambda$ with $\lambda\neq 0$ and $T(\UNAomega'')=\{t\in\R\,:\,\sigma_t^{\omega''}=\id\}\cap{\rm Sd(Q)}^\perp$. Its Connes' $\tau$-invariant is the smallest topology on $\R$ containing $\tau(Q)$ and making continuous the map $(t\mapsto\sigma_t^{\omega''})\,:\,\R\rightarrow{\rm Aut}(A'')$.
\item If ${\rm dim}(A)\geq 2$ and $N\geq 2$ then $\UNAomega''$ has no Cartan subalgebra.
    \item $A''$ has the Haagerup property if and only if $\U_{\omega,\mu}(A)''$ has the Haagerup property.
    \item If $\omega$ and $\mu$ are both traces then $A''$ is Connes embeddable if and only if $\U_{\omega,\mu}(A)''$ is Connes embeddable.
\end{enumerate}
\end{theorem}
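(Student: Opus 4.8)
The plan is to transport all the structural information from the free product von Neumann algebra $A''*M_N(\C)$, which I obtain by upgrading the state preserving isomorphism $\psi_\omega$ of Theorem \ref{THMReducedMN} to the von Neumann algebra level. First I would construct $\psi_\omega''$. The $*$-isomorphism $\psi_\omega\,:\,A_\omega\underset{r}{*}M_N(\C)\rightarrow M_N(\C)\ot\UNAomega$ intertwines the faithful states $\omega*\mu$ and $\mu\ot\omegatilde$, hence implements a unitary between their GNS spaces and extends to a unique normal state preserving $*$-isomorphism of the generated von Neumann algebras. On the domain, the von Neumann algebra generated by $A_\omega\underset{r}{*}M_N(\C)$ in the GNS representation of $\omega*\mu$ is precisely Voiculescu's free product $A''*M_N(\C)$ taken with respect to $\omega''$ and $\mu$; on the target, since $M_N(\C)$ is finite dimensional, the generated von Neumann algebra is $M_N(\C)\ot\UNAomega''$. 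This produces the desired normal isomorphism $\psi_\omega''\,:\,A''*M_N(\C)\rightarrow M_N(\C)\ot\UNAomega''$ extending $\psi_\omega$, its uniqueness being automatic.

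For assertion (1) I would set $M:=A''*M_N(\C)$. By \cite[Lemma 3.2]{Th22}, diffuseness of $\omega$ forces $A''$ to be diffuse, so $M$ is a free product of a diffuse von Neumann algebra with $M_N(\C)$, $N\geq 2$. The structure theory of such free products (due to Ueda) then gives that $M$ is a non-amenable, full and prime factor, of type ${\rm II}_1$ or ${\rm III}_\lambda$ with $\lambda\neq 0$; the exclusion of type ${\rm III}_0$ is guaranteed by the almost periodicity of the modular flow $\sigma^\mu$ coming from the finite dimensional factor $M_N(\C)$. For the invariants, the crucial input is that the modular automorphism group of the free product state $\omega''*\mu$ is the free product of $\sigma^{\omega''}$ and $\sigma^\mu$, and that in such a full factor a modular automorphism $\sigma_t^{\omega''*\mu}$ is inner if and only if it is trivial, which happens exactly when $\sigma_t^{\omega''}=\id$ and $\sigma_t^\mu=\id$ simultaneously. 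On $M_N(\C)$ the condition $\sigma_t^\mu=\id$ means that $Q^{it}$ is central, i.e. scalar, which is precisely $t\in{\rm Sd}(Q)^\perp$; combined with the fact that $T$ is unchanged by tensoring with $M_N(\C)$, this yields $T(\UNAomega'')=\{t\in\R\,:\,\sigma_t^{\omega''}=\id\}\cap{\rm Sd}(Q)^\perp$. The $\tau$-invariant is computed in the same manner: it is the weakest topology making $t\mapsto\sigma_t^{\omega''*\mu}$ continuous into ${\rm Out}(M)$, which by the free product decomposition is the smallest topology containing $\tau(Q)$ and making $(t\mapsto\sigma_t^{\omega''})\,:\,\R\rightarrow{\rm Aut}(A'')$ continuous.

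The remaining assertions are obtained by propagating a single property across the two isomorphisms $A''*M_N(\C)\cong M_N(\C)\ot\UNAomega''$, using on one side that each property passes both ways under $\ot M_N(\C)$ (so that it holds for the corner $\UNAomega''$ iff it holds for $M$) and on the other side the normal conditional expectation from $M$ onto its free component $A''$. For (2), absence of a Cartan subalgebra follows from the deformation/rigidity theory of free product factors (which applies since ${\rm dim}(A)\geq 2$ makes $A''$ non-trivial and $N\geq 2$), together with the stability of this property under passing to a corner. For (3) I would use the stability of the Haagerup property under free products (Boca), $M_N(\C)$ having the property trivially, while the converse uses that the Haagerup property descends to a von Neumann subalgebra admitting a normal conditional expectation. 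For (4), in the tracial case Connes embeddability of $A''$ and of $M_N(\C)$ yields Connes embeddability of the free product, and this property is both preserved and reflected under tensoring by $M_N(\C)$ and under passing to subalgebras, giving the equivalence with $A''$.

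The main obstacle is assertion (1), and specifically the exact identification of the $T$- and $\tau$-invariants: this rests on a careful analysis of the modular theory of the free product state together with the sharp rigidity fact that innerness (respectively approximate innerness) of a modular automorphism of a free product factor forces the corresponding triviality (respectively continuity) conditions on each free component. By contrast, the construction of $\psi_\omega''$ and the transfer arguments in (2)--(4) are comparatively routine once the relevant free product theorems are invoked.
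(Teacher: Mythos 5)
Your overall route coincides with the paper's: extend $\psi_\omega$ spatially (via the state intertwining) to a normal isomorphism $A''*M_N(\C)\rightarrow M_N(\C)\ot\U_{\omega,\mu}(A)''$; quote Ueda's structure theorems for a free product of a diffuse algebra with $M_N(\C)$ to get factoriality, fullness, non-amenability, primeness, the type restriction and the formula $T=\{t:\sigma_t^{\omega''}=\id=\sigma_t^\mu\}$; compute $\sigma_t^\mu(e_{kl})=(\lambda_k/\lambda_l)^{it}e_{kl}$ to convert $\sigma_t^\mu=\id$ into $t\in{\rm Sd}(Q)^\perp$ and continuity of $t\mapsto\sigma_t^\mu$ into containment of $\tau(Q)$, using that $T$ and $\tau$ are unchanged under tensoring by $M_N(\C)$ (the paper cites \cite{Ue11b} and \cite{HMV19} for the $\tau$-invariant steps); then get (2) from the free-product Cartan-rigidity theorem \cite{BHR14} plus passage to the corner, and (4) from free-product embeddability \cite{BDJ08} plus descent via conditional expectation.

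Two points need repair. First, in (3) you invoke Boca's free-product stability of the Haagerup property. That result lives in the world of \emph{finite} von Neumann algebras and \emph{tracial} free products, whereas here $\mu={\rm Tr}(Q\cdot)$ is tracial only when $Q=\frac{1}{N}1$, and $\omega''$ need not be tracial either, so $A''*M_N(\C)$ is in general a non-tracial (possibly type ${\rm III}$) free product; even the definition of the Haagerup property must then be taken in the framework for arbitrary von Neumann algebras. The step as you state it would fail outside the tracial case; the correct tool, and the one the paper cites, is \cite[Corollary C]{CKSVW21}, which gives precisely the free-product equivalence with respect to arbitrary faithful normal states. Second, your parenthetical reason for excluding type ${\rm III}_0$ in (1) --- ``almost periodicity of the modular flow $\sigma^\mu$'' --- is not a valid argument: the free product state $\omega''*\mu$ is almost periodic only if $\omega''$ is, which is not assumed. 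No separate argument is needed, since the exclusion of ${\rm III}_0$ is part of Ueda's theorem (a full factor with separable predual can never be of type ${\rm III}_0$), but the justification you offer would not survive scrutiny as written.
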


\begin{proof}
Since the isomorphism $\psi_\omega\,:\,A\underset{r}{*} M_N(\C)\rightarrow M_N(\C)\ot\UNAomega$ from Theorem \ref{THMReducedMN} intertwines the free product state $\omega*\mu$ and the state $\mu\ot\omegatilde$, $\psi_\omega$ extends to a spacial isomorphism $\psi_\omega''$ between the von Neumann algebras generated in the respective GNS constructions.
\vspace{0.2cm}

\noindent$(1)$. Since $A''$ is diffuse and $N\geq 2$, a direct application of the results of \cite[Theorems 3.4 and 3.7]{Ue11a} gives that the von Neumann algebraic free product $A''*M_N(\C)$ is a full factor  of type ${\rm II}_1$ or of type ${\rm III}_\lambda$ with $\lambda\neq 0$ (hence non-amenable) and it's Connes' $T$ invariant is given by $T(A''*M_N(\C))=\{t\,:\,\sigma_t^{\omega''}=\id=\sigma_t^\mu\}$. Since $M_N(\C)\ot\UNAomega''\simeq A''*M_N(\C)$, $\UNAomega''$ also is a non-amenable and full factor of type ${\rm II}_1$ of ${\rm III}_\lambda$ with $\lambda\neq 0$ and $T(\UNAomega'')=\{t\,:\,\sigma_t^{\omega''}=\id=\sigma_t^\mu\}$. A direct computation shows that, whenever $(e_{kl})_{kl}$ are matrix units in $M_N(\C)$ diagonalizing $Q$ i.e. $Qe_{ij}=\lambda_ie_{ij}$ then, $\sigma_t^\mu(e_{kl})=\left(\frac{\lambda_k}{\lambda_l}\right)^{it}e_{kl}$ for all $t\in\R$. Hence $\sigma_t^\mu=\id\Leftrightarrow t\in\{\frac{r}{s}\,:\,r,s\in{\rm Sp(Q)}\}^\perp={\rm Sd(Q)}^\perp$. This shows the formula for the $T$-invariant. By \cite[Theorem 3.2]{Ue11b}, the $\tau$-invariant of $A''*M_N(\C)$ is the smallest topology on $\R$ making both maps $(t\mapsto\sigma_t^{\omega''})$ and $(t\mapsto\sigma_t^\mu)$ continuous. But from \cite[Corollary B]{HMV19}, the $\tau$-invariant of $M_N(\C)\ot\UNAomega''$ is the $\tau$-invariant of $\UNAomega''$ (this also can be easily checked directly). To conclude the proof of the statement on the $\tau$-invariant, it suffices to note that, by the explicit formula for $\sigma_t^\mu$, one has $(t\mapsto\sigma^\mu_t)$ is continuous if and only if $(t\mapsto \left(\frac{r}{s}\right)^{it})$ is continuous for all $r,s\in{\rm Sp}(Q)$.

\vspace{0.2cm}

\noindent$(2)$ follows from \cite[Theorem A]{BHR14}, $(3)$ follows from from \cite[Corollary C]{CKSVW21} and $(4)$ follows from \cite{BDJ08}.\end{proof}

\noindent Write $\rho'':=\psi_\omega''\vert_{A''}\,:\,A''\rightarrow M_N(\C)\ot\UNAomega''$ and note that $\UNAomega''$ is the von Neumann algebra generated by the coefficients of $\rho''$. 

\begin{lemma}\label{LemmaModular}
Let $(e_k)_k$ be an orthonormal basis of $\C^N$ diagonalizing $Q$ i.e. $Q=\sum_{k=1}^N\lambda_ke_{kk}$, where ${\rm Sp}(Q)=\{\lambda_k\,:\,1\leq k\leq N\}$. Write, for $a\in A''$, $\rho''(a)=\sum_{k,l}e_{kl}\ot\rho''_{kl}(a)$. Then, $$\sigma_t^{\widetilde{\omega}}(\rho''_{kl}(a))=\left(\frac{\lambda_l}{\lambda_k}\right)^{it}\rho''_{kl}(\sigma_t^{\omega''}(a)).$$
Moreover, the smallest topology on $\R$ containing $\tau(Q)$ and making the map $(t\mapsto\sigma_t^{\omega''})$ continuous is the smallest topology on $\R$ making the map $(t\mapsto\sigma_t^{\widetilde{\omega}})$ continuous.
\end{lemma}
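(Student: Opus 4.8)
The plan is to obtain the displayed identity from the fact that $\psi_\omega''$ intertwines the two modular flows, and then to deduce the topological statement by isolating the scalar phases $(\lambda_l/\lambda_k)^{it}$ inside $\sigma_t^{\omegatilde}$. First I would establish the formula. By Theorem \ref{VNMN} the normal isomorphism $\psi_\omega''\,:\,A''*M_N(\C)\to M_N(\C)\ot\UNAomega''$ sends the free product state $\omega''*\mu$ to $\mu\ot\omegatilde$, hence intertwines the associated modular automorphism groups. On the target the modular group of a product state factorizes, $\sigma_t^{\mu\ot\omegatilde}=\sigma_t^{\mu}\ot\sigma_t^{\omegatilde}$, while on the source the modular group of a free product state restricts on the free factor $A''$ to $\sigma_t^{\omega''}$. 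Evaluating the intertwining relation on $a\in A''$, writing $\rho''=\psi_\omega''\vert_{A''}$, and using the formula $\sigma_t^{\mu}(e_{kl})=(\lambda_k/\lambda_l)^{it}e_{kl}$ already recorded in the proof of Theorem \ref{VNMN}, I get
$$\sum_{k,l}e_{kl}\ot\rho''_{kl}(\sigma_t^{\omega''}(a))=\rho''(\sigma_t^{\omega''}(a))=(\sigma_t^{\mu}\ot\sigma_t^{\omegatilde})(\rho''(a))=\sum_{k,l}\left(\tfrac{\lambda_k}{\lambda_l}\right)^{it}e_{kl}\ot\sigma_t^{\omegatilde}(\rho''_{kl}(a)).$$
Comparing the $(k,l)$-coefficients and multiplying by $(\lambda_l/\lambda_k)^{it}$ gives the claimed identity.

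For the topological statement, write $\tau_1$ for the smallest topology containing $\tau(Q)$ that makes $(t\mapsto\sigma_t^{\omega''})$ continuous and $\tau_2$ for the smallest one making $(t\mapsto\sigma_t^{\omegatilde})$ continuous, both in the $u$-topology on the automorphism group. Since the coefficients $\rho''_{kl}(a)$ generate $\UNAomega''$ and the flows are isometric, $u$-continuity with respect to a topology $\tau$ is equivalent to $\tau$-continuity of the $\rL^2$-valued maps $t\mapsto\sigma_t^{\omegatilde}(\rho''_{kl}(a))\xi_{\omegatilde}$. The inclusion $\tau_2\subseteq\tau_1$ is then immediate from the identity: the factor $(\lambda_l/\lambda_k)^{it}$ is $\tau(Q)$-continuous because ${\rm Sd}(Q)$ is generated by the ratios $\lambda_k/\lambda_l$, and $t\mapsto\rho''_{kl}(\sigma_t^{\omega''}(a))\xi_{\omegatilde}$ is $\tau_1$-continuous because $\sigma^{\omega''}$ is $\tau_1$-continuous and $\rho''_{kl}$ is normal and bounded, so their product is $\tau_1$-continuous.

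For the reverse inclusion I would use the identity rewritten as $\rho''_{kl}(\sigma_t^{\omega''}(a))=(\lambda_k/\lambda_l)^{it}\sigma_t^{\omegatilde}(\rho''_{kl}(a))$: once $\tau(Q)\subseteq\tau_2$ is known, the right-hand side is $\tau_2$-continuous in $t$, and since $\rho''$ is a normal $*$-isomorphism onto its image this forces $\sigma^{\omega''}$ to be $\tau_2$-continuous, so that $\tau_2$ is among the topologies defining $\tau_1$ and hence $\tau_1\subseteq\tau_2$. Thus the whole statement reduces to the single inclusion $\tau(Q)\subseteq\tau_2$, i.e. to extracting the pure phases $(\lambda_l/\lambda_k)^{it}$ from $\sigma^{\omegatilde}$, which in the identity are entangled with the $\sigma^{\omega''}$-flow.

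This last inclusion is the main obstacle. My plan is to evaluate the identity on elements $a$ fixed by $\sigma^{\omega''}$, i.e. in the centralizer $A''_{\omega''}$: for such $a$ one has $\sigma_t^{\omegatilde}(\rho''_{kl}(a))=(\lambda_l/\lambda_k)^{it}\rho''_{kl}(a)$, so whenever $\rho''_{kl}(a)\neq0$ the computation
$$\|\sigma_t^{\omegatilde}(\rho''_{kl}(a))\xi_{\omegatilde}-\sigma_s^{\omegatilde}(\rho''_{kl}(a))\xi_{\omegatilde}\|_2=\big|(\lambda_l/\lambda_k)^{it}-(\lambda_l/\lambda_k)^{is}\big|\,\|\rho''_{kl}(a)\xi_{\omegatilde}\|_2$$
forces $(t\mapsto(\lambda_l/\lambda_k)^{it})$ to be $\tau_2$-continuous; ranging over all pairs $(k,l)$ then yields $\tau(Q)\subseteq\tau_2$. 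The delicate point I expect to have to argue carefully is the existence, for each pair with $\lambda_k\neq\lambda_l$, of a $\sigma^{\omega''}$-fixed element $a$ with $\rho''_{kl}(a)\neq0$: this is immediate when $\omega$ is a trace (the centralizer is all of $A''$ and $\rho''$ is injective), and in general it is where the diffuseness of $\omega$ enters, guaranteeing that the centralizer is large enough and that the off-diagonal coefficient maps $\rho''_{kl}$ do not vanish identically on it.
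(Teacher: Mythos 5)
Your derivation of the displayed identity is correct and coincides with the paper's own argument (intertwining the modular groups through $\psi_\omega''$ and using $\sigma_t^\mu(e_{kl})=(\lambda_k/\lambda_l)^{it}e_{kl}$), and your proof of the inclusion $\tau_2\subseteq\tau_1$ is also sound. The genuine gap is exactly the step you flag, $\tau(Q)\subseteq\tau_2$, and your proposed resolution does not close it. The lemma assumes only that $\omega$ and $\mu$ are faithful; diffuseness is not a hypothesis here, and, more importantly, diffuseness would not rescue the centralizer argument anyway: a diffuse von Neumann algebra can carry a faithful normal state with \emph{trivial} centralizer. For instance, if $U$ is a weakly mixing orthogonal representation of $\R$, the free quasi-free state on the free Araki--Woods factor $\Gamma(H_\R,U)''$ has centralizer $\C1$ (weak mixing kills all fixed vectors of $\bigoplus_n U^{\ot n}$ except the vacuum); restricting that state to a $\sigma$-weakly dense separable C*-subalgebra $A$ produces a faithful, diffuse $\omega$ with $A''_{\omega''}=\C1$. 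In that situation your only $\sigma^{\omega''}$-fixed elements are scalars, and $\rho''_{kl}(1)=\delta_{kl}1$, so your scheme yields nothing for $k\neq l$. (The other half of your worry is a non-issue: a free-product computation as in Lemma \ref{LemDualState} gives $\omegatilde\bigl(\rho''_{kl}(b)^*\rho''_{kl}(a)\bigr)=\lambda_k\,\omega''(b^*a)$ for all $\omega''$-centered $a,b$, so every coefficient map $\rho''_{kl}$ is injective on centered elements; the problem is solely the possible smallness of the centralizer, which no stated hypothesis controls.)

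The step can be closed without any centralizer by \emph{cancelling} the flow rather than freezing it, i.e.\ by comparing off-diagonal and diagonal coefficients of the same element. Fix a centered $a\neq0$; this needs $\dim(A)\geq2$, which is genuinely necessary (for $A=\C$ one has $\UNAomega''=\C$, so $\tau_2$ is indiscrete while $\tau(Q)$ need not be), and is automatic in the theorem where the lemma is applied since there $\omega$ is diffuse. Using the displayed identity, the invariance $\omega''\circ\sigma_t^{\omega''}=\omega''$, and the computation above (whose constant $\lambda_k$ depends only on the first index), one gets
$$f_{kl}(t):=\omegatilde\bigl(\rho''_{kl}(a)^*\,\sigma_t^{\omegatilde}(\rho''_{kl}(a))\bigr)=\Bigl(\tfrac{\lambda_l}{\lambda_k}\Bigr)^{it}\lambda_k\,\omega''\bigl(a^*\sigma_t^{\omega''}(a)\bigr),\qquad f_{kk}(t)=\lambda_k\,\omega''\bigl(a^*\sigma_t^{\omega''}(a)\bigr).$$
Both functions are $\tau_2$-continuous, since $u$-continuity of $t\mapsto\sigma_t^{\omegatilde}$ pairs against the normal functional $\omegatilde\bigl(\rho''_{kl}(a)^*\,\cdot\,\bigr)$, and $f_{kl}=(\lambda_l/\lambda_k)^{it}f_{kk}$ with $f_{kk}(0)=\lambda_k\omega''(a^*a)\neq0$. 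Hence along any net $t_i\to0$ in $\tau_2$, eventually $f_{kk}(t_i)\neq0$ and $(\lambda_l/\lambda_k)^{it_i}=f_{kl}(t_i)/f_{kk}(t_i)\to1$; since $\tau_2$ is the initial topology along a group homomorphism into a topological group, it is translation invariant, so continuity of the character $t\mapsto(\lambda_l/\lambda_k)^{it}$ at $0$ gives continuity everywhere, i.e.\ $\tau(Q)\subseteq\tau_2$. With this substitute for your centralizer step, the remainder of your argument (and of the paper's) goes through. Note that the paper's proof compresses precisely this point into the single sentence ``using the explicit formula for $\sigma_t^{\widetilde\omega}$\dots'', so you correctly isolated the one non-trivial step; your diagnosis of where the difficulty lies is right, but the proposed remedy rests on a fact (non-triviality of the centralizer) that is simply not available.
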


\begin{proof}
Since the isomorphism $\psi_{\omega}''\,:\,A''*M_{N}(\C)\rightarrow M_{N}(\C)\ot \Ucal_{\omega,\mu}(A)''$ of Theorem \ref{VNMN} intertwines the states $\omega''*\mu$ and $\mu\ot\widetilde{\omega}$, it also intertwines the modular groups. Hence, for all $a\in A''$,
\begin{eqnarray*}
(\sigma_t^\mu\ot\sigma_t^{\widetilde{\omega}})\rho''(a)&=&\sum_{k,l}\sigma_t^\mu(e_{kl})\ot\sigma_t^{\widetilde{\omega}}(\rho''_{kl}(a))=\sum_{k,l} e_{kl}\ot\left(\frac{\lambda_k}{\lambda_l}\right)^{it}\sigma_t^{\widetilde{\omega}}(\rho''_{kl}(a))\\
&=&\rho''(\sigma_t^{\omega''*\mu}(a))=\rho''(\sigma_t^{\omega''}(a))=\sum_{k,l}e_{kl}\ot\rho''_{kl}(\sigma_t^{\omega''}(a)).
\end{eqnarray*}
The formula of the Lemma follows. Let $\tau$ be the smallest topology on $\R$ making the map $(t\mapsto\sigma_t^{\widetilde{\omega}})$ continuous and $\tau_0$ be the smallest topology on $\R$ containing $\tau(Q)$ and for which the map $(t\mapsto\sigma_t^{\omega''})$ is continuous. Then, both maps $(t\mapsto\sigma_t^{\mu})$ and $(t\mapsto\sigma_t^{\omega''})$ are $\tau_0$-continuous so $(t\mapsto\sigma_t^{\omega''*\mu})$ is $\tau_0$-continuous. Since the isomorphism $\psi_\omega''$ intertwines $\omega''*\mu$ and $\mu\ot\widetilde{\omega}$, we deduce that $(t\mapsto\sigma_t^\mu\ot\sigma_t^{\widetilde{\omega}})$ is $\tau_0$-continuous. It implies that $(t\mapsto\sigma_t^{\widetilde{\omega}})$ is $\tau_0$ continuous so $\tau\subseteq\tau_0$. Using the explicit formula for $\sigma_t^{\widetilde{\omega}}$, we see that $(t\mapsto \left(\frac{r}{s}\right)^{it})$ is $\tau$-continuous, for all $r,s\in{\rm Sp}(Q)$. It follows that $(t\mapsto\sigma_t^\mu)$ is $\tau$-continuous. Hence, $(t\mapsto\sigma_t^\mu\ot\sigma_t^{\widetilde{\omega}})$ is $\tau$-continuous. Using that $\psi_\omega''$ intertwines $\omega''*\mu$ and $\mu\ot\widetilde{\omega}$ we deduce, as before, that $(t\mapsto\sigma_t^{\omega''})$ is $\tau$-continuous hence, $\tau_0\subseteq\tau$.
\end{proof}

\subsubsection{The finite dimensional case}
We assume that $B=\bigoplus_{\kappa=1}^KM_{N_\kappa}(\C)$ is finite dimensional and $\mu\in B^*$ is a faithful state. We define the von Neumann algebra $\Ucal_{\omega,\mu}(A,B)''$ as the von Neumann algebra generated by $\Ucal_{\omega,\mu}(A,B)$ in the GNS-construction of $\omegatilde$. By construction, $\Ucal_{\omega,\mu}(A,B)''$ is the von Neumann algebraic free product of $\Ucal_{\omega,\mu_\kappa}(A)''$, $1\leq\kappa\leq K$, with respect to the faithful normal states $\omegatilde_\kappa\in\Ucal_{\omega,\mu_\kappa}(A)''_*$.

\vspace{0.2cm}

\noindent Recall that $\mu=\sum_{\kappa=1}^K{\rm Tr}(Q_\kappa\cdot)$, where $Q_\kappa\in M_{N_\kappa}(\C)$ is strictly positive. Denote by ${\rm Sd}(\mu)$ the closed multiplicative subgroup of $\R^*_+$ generated by $\{\frac{r}{s}\,:\,r,s\in{\rm Sp}(Q_\kappa),1\leq\kappa\leq K\}$. We also write $\tau(\mu):=\tau({\rm Sd}(\mu))$ and, as before,  $N:=\sum_{\kappa=1}^KN_\kappa$.

\begin{theorem}
The following holds.
\begin{enumerate}
\item If $\omega$ is diffuse and $N\geq 2$ then $\U_{\omega,\mu}(A,B)''$ is a non-amenable, full and prime factor of type ${\rm II}_1$ or of type ${\rm III}_\lambda$ with $\lambda\neq 0$ and $$T(\U_{\omega,\mu}(A,B)'')=\{t\in\R\,:\,\sigma_t^{\omega''}=\id\}\cap{\rm Sd(\mu)}^\perp.$$
It's Connes' $\tau$-invariant is the smallest topology on $\R$ containing $\tau(\mu)$ and making continuous the map $(t\mapsto\sigma_t^{\omega''})\,:\,\R\rightarrow{\rm Aut}(A'')$.
\item If ${\rm dim}(A)\geq 2$ and either $K\neq 2$ and $N\geq 2$ or $K=2$ and ${\rm dim}(A)+N\geq 5$ then $\U_{\omega,\mu}(A,B)''$ has no Cartan subalgebra.
    \item $A''$ has the Haagerup property if and only if $\U_{\omega,\mu}(A,B)''$ has the Haagerup property.
    \item If $\omega$ and $\mu$ are both traces then $A''$ is Connes embeddable if and only if $\U_{\omega,\mu}(A,B)''$ is Connes embeddable.
\end{enumerate}
\end{theorem}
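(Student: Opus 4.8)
The plan is to build everything on the structural fact recorded just above, that $\U_{\omega,\mu}(A,B)''$ is the von Neumann algebraic free product of the one-block algebras $\U_{\omega,\mu_\kappa}(A)''$, $1\leq\kappa\leq K$, with respect to the faithful normal states $\omegatilde_\kappa$. Each of the four assertions then reduces to the corresponding statement of the one-block Theorem \ref{VNMN} combined with a stability (and reflection) property of free products, which I would run by induction on $K$, splitting off the last factor as $\U_{\omega,\mu}(A,B)''=D*\U_{\omega,\mu_K}(A)''$ with $D$ the free product of the first $K-1$ factors. Throughout I use that, when $\omega$ is diffuse, every free component is diffuse: a block with $N_\kappa=1$ gives $\U_{\omega,\mu_\kappa}(A)''=A''$, diffuse since $\omega$ is, while a block with $N_\kappa\geq 2$ gives a diffuse factor by Theorem \ref{VNMN}$(1)$.

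For $(1)$, since $\U_{\omega,\mu}(A,B)''$ is a free product of diffuse algebras with faithful normal states, the results of \cite{Ue11a} apply directly to give that it is a non-amenable, full, prime factor of type ${\rm II}_1$ or ${\rm III}_\lambda$, $\lambda\neq 0$, with $T$-invariant $\bigcap_\kappa\{t:\sigma_t^{\omegatilde_\kappa}=\id\}$. Applying Lemma \ref{LemmaModular} in each block gives $\{t:\sigma_t^{\omegatilde_\kappa}=\id\}=\{t:\sigma_t^{\omega''}=\id\}\cap{\rm Sd}(Q_\kappa)^\perp$; intersecting over $\kappa$ and using $\big(\bigcup_\kappa{\rm Sd}(Q_\kappa)\big)^\perp={\rm Sd}(\mu)^\perp$ yields the stated formula. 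For the $\tau$-invariant, \cite{Ue11b} identifies it as the smallest topology making every $(t\mapsto\sigma_t^{\omegatilde_\kappa})$ continuous; the second part of Lemma \ref{LemmaModular}, applied blockwise, rewrites this as the smallest topology containing each $\tau(Q_\kappa)$, hence containing $\tau(\mu)$, and making $(t\mapsto\sigma_t^{\omega''})$ continuous.

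Parts $(3)$ and $(4)$ follow the same inductive pattern. For $(3)$, if $A''$ has the Haagerup property then each component $\U_{\omega,\mu_\kappa}(A)''$ does by Theorem \ref{VNMN}$(3)$, and by \cite{CKSVW21} the free product inherits it; conversely each component is the range of an $\omegatilde$-preserving conditional expectation, so it inherits the Haagerup property from $\U_{\omega,\mu}(A,B)''$, whence $A''$ does again by Theorem \ref{VNMN}$(3)$. For $(4)$, when $\omega$ and $\mu$ are traces the free product is tracial, so by \cite{BDJ08} Connes embeddability of $\U_{\omega,\mu}(A,B)''$ is equivalent to that of each component, which by Theorem \ref{VNMN}$(4)$ is equivalent to Connes embeddability of $A''$.

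The main obstacle is $(2)$, where the numerical hypotheses must be handled with care. The tool is \cite[Theorem A]{BHR14}, which yields the absence of a Cartan subalgebra for a free product of two von Neumann algebras with faithful normal states once one excludes the degenerate case in which both factors are two-dimensional. Since $\U_{\omega,\mu}(A,B)''$ is itself a free product of the components $\U_{\omega,\mu_\kappa}(A)''$, the plan is to group these into two factors so that this degenerate configuration is avoided; the conditions are calibrated precisely for this. When $K=1$ this is Theorem \ref{VNMN}$(2)$ (where the statement is transported from $A''*M_N(\C)$ through the amplification $M_N(\C)\ot\U_{\omega,\mu}(A)''$). When $K\geq 3$, writing $\U_{\omega,\mu}(A,B)''=D*\U_{\omega,\mu_K}(A)''$, the factor $D$ is a free product of at least two algebras of dimension $\geq 2$, hence infinite-dimensional, so the degenerate case cannot occur and $N\geq 2$ (automatic here) suffices. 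The delicate case is $K=2$: a component $\U_{\omega,\mu_\kappa}(A)''$ is two-dimensional only when $N_\kappa=1$ and ${\rm dim}(A'')=2$, i.e. (using faithfulness of $\omega$ and ${\rm dim}(A)\geq 2$) when $N_\kappa=1$ and ${\rm dim}(A)=2$; so $\C^2*\C^2$ occurs exactly when $N_1=N_2=1$ (that is $B=\C^2$, $N=2$) and ${\rm dim}(A)=2$, and the hypothesis ${\rm dim}(A)+N\geq 5$ removes precisely this configuration, forcing either a matrix block (hence an infinite-dimensional component) or ${\rm dim}(A)\geq 3$. Checking that \cite[Theorem A]{BHR14} applies in each surviving configuration is then the crux of the argument.
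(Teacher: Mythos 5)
Your proposal is correct and follows essentially the same route as the paper's proof: the free product decomposition into the one-block algebras $\U_{\omega,\mu_\kappa}(A)''$, then \cite[Theorems 3.4 and 3.7]{Ue11a} together with a blockwise application of Lemma \ref{LemmaModular} and \cite[Proposition 3.1]{Ue11b} for part (1), a grouping into two free factors feeding into \cite[Theorem A]{BHR14} for part (2), and \cite[Corollary C]{CKSVW21} and \cite{BDJ08} for parts (3) and (4). Your case analysis in (2) merely makes explicit the dimension count that the paper compresses into the one-line claim that, under the stated hypotheses and using Theorem \ref{VNMN} for $K=1$, one can write $\U_{\omega,\mu}(A,B)''=M_1*M_2$ with ${\rm dim}(M_1)\geq 2$ and ${\rm dim}(M_2)\geq 3$.
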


\begin{proof}
$(1)$. By Theorem \ref{VNMN} we may and will assume that $K\geq 2$ and write $\Ucal_{\omega,\mu}(A,B)''=M*\Ucal_{\omega,\mu_K}(A)''$, where the von Neumann algebraic free product is with respect to the faithful normal states $\Omega:=\omegatilde_1*\dots*\omegatilde_{K-1}\in M_*$ and $\omegatilde_K$. Note that, since $\omega$ is diffuse, $\U_{\omega,\mu_\kappa}(A)''$ is always diffuse: it follows from Theorem \ref{VNMN} for $N_\kappa\geq 2$ (but can be easily proved directly) and, if $N_\kappa=1$ then $\U_{\omega,\mu_\kappa}(A)=A''$ is diffuse hence, both $M$ and $\Ucal_{\omega,\mu_K}(A)''$ are diffuse so that we may apply \cite[Theorems 3.4 and 3.7]{Ue11a} to deduce that $\Ucal_{\omega,\mu}(A,B)''$ is a full factor of type ${\rm II}_1$ or of type ${\rm III}_\lambda$ with $\lambda\neq 0$ (hence non-amenable) and it's Connes' $T$ invariant is given by $$T(\Ucal_{\omega,\mu}(A,B)'')=\{t\,:\,\sigma_t^{\Omega}=\id=\sigma_t^{\widetilde{\omega}_K}\}=\bigcap_{\kappa=1}^K\{t\in\R\,:\,\sigma_t^{\widetilde{\omega}_\kappa}=\id\}.$$
Lemma \ref{LemmaModular} implies that, for all $1\leq\kappa\leq K$, $$\{t\in\R\,:\,\sigma_t^{\widetilde{\omega}_\kappa}=\id\}=\{t\in\R\,:\,\sigma_t^{\omega''}=\id\}\cap{\rm Sd}(Q_\kappa)^\perp.$$
Since $\bigcap_{\kappa=1}^K{\rm Sd}(Q_\kappa)^\perp={\rm Sd}(\mu)^\perp$, it concludes the computation of the $T$ invariant. By induction using \cite[Proposition 3.1]{Ue11b}, the $\tau$-invariant of $\U_{\omega,\mu}(A,B)''$ is the smallest topology on $\R$ making the maps $(t\mapsto\sigma_t^{\widetilde{\omega}_\kappa})$ continuous, for all $1\leq\kappa\leq K$. Let us denote by $\tau$ the $\tau$ invariant of $\U_{\omega,\mu}(A,B)''$. It follows from Lemma \ref{LemmaModular} that $\tau$ is the smallest topology on $\R$ containing $\tau(Q_\kappa)$ for all $1\leq\kappa\leq K$ and such that $(t\mapsto\sigma_t^{\omega''})$ is $\tau$-continuous. Since the topology generated by $\bigcup_{\kappa=1}^K\tau(Q_\kappa)$ is exactly $\tau(\mu)$, it concludes the proof of $(1)$.

\vspace{0.2cm}

\noindent$(2)$. By Theorem \ref{VNMN} we may and will assume that $K\geq 2$. Using the hypothesis, we can write $\Ucal_{\omega,\mu}(A,B)''=M_1*M_2$ with ${\rm dim}(M_1)\geq 2$ and ${\rm dim}(M_2)\geq 3$ and apply \cite[Theorem A]{BHR14}.

\vspace{0.2cm}

\noindent $(3)$ follows from from \cite[Corollary C]{CKSVW21} and $(4)$ follows from \cite{BDJ08}.\end{proof}

%%%%%%%%%%%%%%%%%%%%%%%%%%%%%%%%%%%%%%%%%%%%%%%%%%%%%%%%%%%%%%%%%%%%%%%%%%%%%%%%%%%%%%%%%%%%%%%%%%%%%%%%%%%%%%%%%%%%%%%%%%%%%%%%%%%%%%%%%%%%%%%%%%%%%%%%%%%%%%%%%%%%%%%%%%%%%%%%%%%%%%%%%%%%%%%%%%%%%%%%%%%%%%%%%%%%%%%%%%%%%%%%%%%%%%%%%%%%%%%%%%%%%%%%%%%%%%%%%%%%%%%%%%%%%%%%%%%%%%%%%%%%%%%%%%%%%%%%%%%%%%%%%%%%%%%%%%%%%%%%%%%%%%%%%%%%%%%%%%%%%%%%%%%%%%%%%%%%%%%%%%%%%%%%%%%%%%%%%%%%%%%%%%%%%%%%%%%%%%%%%%%%%%%%%%%%%%%%%%%%%%%%%
\section{Generalized Universal Quantum Homomorphism}\label{Section:general case}

\noindent Our goal in this section is to construct the universal quantum homomorphism for any pair $(A,B)$ of unital C*-algebra. In the first subsection, we show that, for non trivial $A$, the pair $(A,B)$ is matched if and only if $B$ is finite dimensional. This motivates us to deal with locally C*-algebra instead of classical C*-algebras. This is why the second subsection contains preliminaries on locally C*-algebras. In the third subsection we construct the universal quantum homomorphism $\rho\,:\, A\rightarrow B\ot \Ucal(A,B)$ with values in a locally C*-algebra $\Ucal(A,B)$. Finally the fourth subsection contains a study of the natural quantum semigroup structure on $\Ucal(A,A)$.

\subsection{Non-matched pairs}

\noindent Given a Banach space $X$ and $r>0$, we write $B_r(X):=\{x\in X\,:\,\Vert x\Vert\leq r\}$. The following simple Lemma is a direct consequence of Banach-Alaoglu's and Goldstine's Theorem.

\begin{lemma}\label{LemmaDual}
The natural map $B_1(X^*)\rightarrow B_1(X^{***})$ is weak*-weak*-continuous.
\end{lemma}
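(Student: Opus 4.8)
The plan is to realize the natural map as the canonical isometric embedding $j\colon X^*\to X^{***}$, $j(f)(\Phi)=\Phi(f)$ for $\Phi\in X^{**}$, and to prove that its restriction to $B_1(X^*)$ is continuous from the weak* topology $\sigma(X^*,X)$ on the source to the weak* topology $\sigma(X^{***},X^{**})$ on the target. Since the target topology is that of pointwise convergence on $X^{**}$, it suffices to fix a net $(f_\alpha)$ in $B_1(X^*)$ with $f_\alpha\to f$ for $\sigma(X^*,X)$ and to verify $\Phi(f_\alpha)\to\Phi(f)$ for every $\Phi\in X^{**}$. The first and easy step is to handle the functionals of the form $\Phi=\iota_X(x)$, $x\in X$, where $\iota_X\colon X\to X^{**}$ is the canonical embedding: here $\iota_X(x)(f_\alpha)=f_\alpha(x)\to f(x)=\iota_X(x)(f)$ is immediate from $\sigma(X^*,X)$-convergence, so the conclusion holds on the subspace $\iota_X(X)\subseteq X^{**}$.

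The second step is to pass from $\iota_X(X)$ to a general $\Phi\in X^{**}$ by approximation, and this is where the two cited results enter. By Banach--Alaoglu, $B_1(X^*)$ is $\sigma(X^*,X)$-compact; in particular the entire net $(f_\alpha)$ stays inside the fixed ball $B_1(X^*)$, so the evaluation functionals $\Psi\mapsto\Psi(f_\alpha)$ and $\Psi\mapsto\Psi(f)$ on $X^{**}$ are uniformly bounded. By Goldstine's theorem, $\iota_X(B_1(X))$ is weak*-dense in $B_1(X^{**})$, so for $\Phi\in B_1(X^{**})$ and $\varepsilon>0$ I would approximate $\Phi$ by some $\iota_X(x)$ and split $\Phi(f_\alpha)-\Phi(f)$ into the middle term $\iota_X(x)(f_\alpha)-\iota_X(x)(f)$, controlled by the first step, and two error terms, controlled by the uniform bound coming from Banach--Alaoglu; letting $\alpha$ run and then $\varepsilon\to 0$ gives $\Phi(f_\alpha)\to\Phi(f)$. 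A final rescaling removes the normalisation to $B_1(X^{**})$.

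The step I expect to be the main obstacle is precisely this reduction to the dense family $\iota_X(X)$: the target pairing runs over all of $X^{**}$, which is strictly larger than $\iota_X(X)$ when $X$ fails to be reflexive, so pointwise convergence on $\iota_X(X)$ must be upgraded to pointwise convergence on $X^{**}$. This is the delicate point that forces both theorems to be used together rather than separately -- Banach--Alaoglu confines the net to a single ball and thereby supplies the uniform bound needed to absorb the approximation error, while Goldstine ensures that $\iota_X(X)$ lies close enough to each element of $X^{**}$ for that error to be made small. Once continuity is in hand, Banach--Alaoglu also makes $B_1(X^*)$ compact and $B_1(X^{***})$ Hausdorff, so the restricted map is automatically a homeomorphism onto its image, which is the form in which the lemma will subsequently be applied.
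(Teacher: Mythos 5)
Your plan founders at exactly the step you flag as the main obstacle, and the device you propose for getting past it does not work. Goldstine's theorem makes $\iota_X(B_1(X))$ \emph{weak*}-dense in $B_1(X^{**})$, not norm-dense; for non-reflexive $X$ the subspace $\iota_X(X)$ is norm-closed and proper, so $\Vert\Phi-\iota_X(x)\Vert$ stays bounded away from $0$ whenever $\Phi\notin\iota_X(X)$. Consequently your two ``error terms'' $(\Phi-\iota_X(x))(f_\alpha)$ and $(\Phi-\iota_X(x))(f)$ are not small: the uniform bound $\Vert f_\alpha\Vert\leq 1$ from Banach--Alaoglu only gives $\vert(\Phi-\iota_X(x))(f_\alpha)\vert\leq\Vert\Phi-\iota_X(x)\Vert$, while weak*-closeness of $\iota_X(x)$ to $\Phi$ controls the difference only at finitely many \emph{prescribed} functionals in $X^*$ --- and the functionals $f_\alpha$ at which you need control form an infinite family varying with $\alpha$. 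This is also not how the paper argues: there one fixes the net $(\omega_n)$, forms the set $\mathcal{Z}=\{\mu\in B_1(X^{**})\,:\,\omega_n(\mu)\rightarrow\omega(\mu)\}$, notes $B_1(X)\subset\mathcal{Z}$, and tries to prove that $\mathcal{Z}$ is weak*-closed by covering $B_2(X^*)$ by the sets $\{\chi\,:\,\vert\mu_k(\chi)-\mu(\chi)\vert<\epsilon/2\}$, extracting a finite subcover, and choosing the approximating index $k_n$ \emph{depending on} $\chi_n=\omega-\omega_n$; no norm approximation is ever invoked.

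You should also know that the obstacle you identified is not merely delicate but fatal: the statement itself is false for general (even separable) $X$. Unwound, the lemma asserts that every bounded weak*-convergent net in $X^*$ converges against every $\mu\in X^{**}$, i.e.\ is weakly convergent. Take $X=c_0$, so $X^*=\ell^1$ and $X^{**}=\ell^\infty$: the basis vectors $e_n\in B_1(\ell^1)$ satisfy $e_n\rightarrow 0$ in $\sigma(\ell^1,c_0)$, yet testing against $\mathbf{1}=(1,1,\dots)\in\ell^\infty$ gives $\mathbf{1}(e_n)=1\not\rightarrow 0$ (a separable space for which the conclusion holds must be reflexive; this is the Grothendieck property). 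The same example locates the flaw in the paper's own argument: the covering sets $\{\chi\in B_2(X^*)\,:\,\vert\mu_k(\chi)-\mu(\chi)\vert<\epsilon/2\}$ are open for the weak topology $\sigma(X^*,X^{**})$ but in general not for the weak* topology $\sigma(X^*,X)$ (a functional $\mu_k-\mu\in X^{**}$ is weak*-continuous only when it comes from $X$), whereas $B_2(X^*)$ is compact only for the weak* topology, so no finite subcover can be extracted. So your instinct that the reduction from $\iota_X(X)$ to all of $X^{**}$ is ``the delicate point'' is exactly right: it is the point where the lemma itself breaks down, and neither your three-epsilon argument nor the paper's covering argument gets past it.
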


\begin{proof}
Let $(\omega_n)_n$ be a net in $B_1(X^*)$ converging weak* to $\omega\in B_1(X^*)$. It suffices to show that $\omega_n(\mu)\rightarrow\omega(\mu)$ for all $\mu\in B_1(X^{**})$. Let $\mathcal{Z}:=\{\mu\in B_1(X^{**})\,:\,\omega_n(\mu)\rightarrow\omega(\mu)\}$ and note that, since $\omega_n\rightarrow\omega$ weak* in $B_1(X^*)$, $\mathcal{Z}$ contains $B_1(X)$ (viewed as a subset of $B_1(X^{**})$). By weak*-density of $B_1(X)$ in $B_1(X^{**})$, it suffices to show that $\mathcal{Z}$ is weak*-closed in $B_1(X^{**})$. Let $\mu_k$ be a net in $\mathcal{Z}$ converging weak* to $\mu\in B_1(X^{**})$. Fix $\epsilon>0$ and note that $$B_2(X^*)=\bigcup_k\{\chi\in B_2(X^*)\,:\,\vert\mu_k(\chi)-\mu(\chi)\vert<\epsilon/2\}.$$
By weak*-compactness of $B_2(X^*)$, there exists a finite set $K$ such that $$B_2(X^*)=\bigcup_{k\in K}\{\chi\in B_2(X^*)\,:\,\vert\mu_{k}(\chi)-\mu(\chi)\vert<\epsilon/2\}.$$
Since $\mu_k\in\mathcal{Z}$ and $K$ is finite, there exists $N$ such that, for all $n\geq N$ one has $\vert\omega_n(\mu_k)-\omega(\mu_k)\vert<\epsilon/2$ for all $k\in K$. Fix $n\in\N$, since $\chi_n:=\omega-\omega_n\in B_2(X^*)$ there exists $k_n\in K$ such that $\vert\mu_{k_n}(\chi_n)-\mu(\chi_n)\vert<\epsilon/2$. Then, for all $n\geq N$,
\begin{eqnarray*}
\vert\omega_n(\mu)-\omega(\mu)\vert&\leq&
\vert\omega_n(\mu)-\omega_n(\mu_{k_n})+\omega(\mu_{k_n})-\omega(\mu)\vert+\vert\omega_n(\mu_{k_n})-\omega(\mu_{k_n})\vert\\
&<&\epsilon/2+\vert\mu_{k_n}(\chi_n)-\mu(\chi_n)\vert<\epsilon.
\end{eqnarray*}
Hence, $\mu\in \mathcal{Z}$.
\end{proof}

\noindent We will also use the following Lemma.

\begin{comment}
\begin{lemma}\label{LemmaRFD}
Let $A$ be a unital RFD C*-algebra then for $a\in A$ with $a\notin\C1$ there exists a finite dimensional representation of $\pi\,:\,A\rightarrow\Lcal(H)$, such that $\pi(a)\notin\C1$.
\end{lemma}

\begin{proof}
Let $a\in A\setminus\C1$ so that $a\neq 0$. Let $\pi_0\,:\,A\rightarrow\Lcal(K_0)$ be a finite dimensional representation such that $\pi_0(a)\neq 0$. If $\pi_0(a)\notin\C1$ we are done so we may and will assume that $\pi_0(a)\in\C1$. Hence, there exists $\lambda\in\C$, $\lambda\neq 0$ such that $\pi_0(a)=\lambda1$. Since $a-\lambda1\neq 0$, there exists a finite dimensional representation $\pi_\lambda\,:\,A\rightarrow\Lcal(K_\lambda)$ such that $\pi_\lambda(a)\neq\lambda 1$. Consider the finite dimensional representation $\pi:=\pi_0\oplus\pi_\lambda\,:\, A\rightarrow\Lcal(K_0\oplus K_\lambda)$ so that $\pi(a)=\left(\begin{array}{cc} \lambda 1&0\\0&\pi_\lambda(a)\end{array}\right)$. If $\pi(a)\in \C1$ then there exists $\mu\in\C$ such that $\left(\begin{array}{cc} \lambda 1&0\\0&\pi_\lambda(a)\end{array}\right)=\left(\begin{array}{cc} \mu 1&0\\0&\mu1\end{array}\right)$. It follows that $\pi_\lambda(a)=\lambda1$, a contradiction. Hence, $\pi(a)\notin\C1$.\end{proof}
\end{comment}

\begin{lemma}\label{LemmaSpecificElement}
Let $A$ be a (separable) unital C*-algebra. The following are equivalent.
\begin{enumerate}
    \item ${\rm dim}(A)\geq 2$.
\item There exists a unital $*$-homomorphism $\pi_0\,:\, A\rightarrow\Lcal(H)$, an element $a_0\in A$ and a norm continuous map $[0,1]\rightarrow\mathcal{U}(H)$, $t\mapsto u_t$, such that $u_0=1$ and $u_1\pi_0(a_0)u_1^*-\pi_0(a_0)+1$ is not invertible.
\end{enumerate}
\end{lemma}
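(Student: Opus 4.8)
The plan is to prove the equivalence in Lemma~\ref{LemmaSpecificElement} by showing both implications, with the main work in constructing the explicit data of $(2)$ from the mere assumption $\dim(A)\geq 2$. The reverse implication $(2)\Rightarrow(1)$ is essentially immediate: if $\dim(A)=1$ then $A=\C 1$, so any unital $*$-homomorphism $\pi_0$ sends $a_0$ to a scalar, any unitary $u_1$ commutes with it, and $u_1\pi_0(a_0)u_1^*-\pi_0(a_0)+1=1$ is invertible; contrapositively, the existence of the data in $(2)$ forces $\dim(A)\geq 2$.

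For the forward implication $(1)\Rightarrow(2)$, I would first use $\dim(A)\geq 2$ to produce a self-adjoint element of $A$ whose spectrum contains at least two distinct points. The plan is to pick such a self-adjoint $a_0$, and then pass to a representation in which $\pi_0(a_0)$ genuinely has two distinct eigenvalues realized on orthogonal subspaces. Concretely, using the spectral theorem (or, if one wants to stay concrete, functional calculus to produce two orthogonal spectral projections $p,q$ of $a_0$ corresponding to disjoint spectral windows with distinct associated values), I would build $\pi_0$ so that there exist two orthonormal vectors $\xi,\eta\in H$ with $\pi_0(a_0)\xi=\alpha\xi$ and $\pi_0(a_0)\eta=\beta\eta$ for real $\alpha\neq\beta$. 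The role of $\pi_0$ is just to give me two honest orthogonal directions on which $a_0$ acts by different scalars.

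The key step is then to rotate one eigenvector into the other and tune the scalars so that $1$ lands in the spectrum of the commutator-type difference. I would take $u_t$ to be a norm-continuous path of unitaries supported on the two-dimensional subspace $\mathrm{span}(\xi,\eta)$ (acting as the identity on its orthogonal complement), implementing a rotation by angle $\tfrac{\pi}{2}t$, so that $u_0=1$ and $u_1$ swaps $\xi\mapsto\eta$, $\eta\mapsto-\xi$ (a genuine rotation, hence unitary and norm-continuous in $t$). Restricting the operator $T:=u_1\pi_0(a_0)u_1^*-\pi_0(a_0)+1$ to this invariant two-dimensional subspace, a direct computation gives a $2\times2$ matrix of the form $\begin{pmatrix}\beta-\alpha+1 & 0\\ 0 & \alpha-\beta+1\end{pmatrix}$; this is singular precisely when $\beta-\alpha+1=0$ or $\alpha-\beta+1=0$, i.e.\ when $|\alpha-\beta|=1$. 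Since at the outset I only know $\alpha\neq\beta$, the remaining freedom I would exploit is to rescale $a_0$: replacing $a_0$ by $\lambda a_0$ for a suitable real $\lambda$ (equivalently choosing the spectral values to differ by exactly $1$) makes $|\alpha-\beta|=1$, forcing $T$ to have a nontrivial kernel and hence to be non-invertible.

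The part demanding the most care will be arranging that the two distinct spectral values of $a_0$ are \emph{simultaneously} realized as eigenvalues on an orthogonal pair of vectors inside a single representation, rather than merely lying in the spectrum; this is where I would lean on choosing $\pi_0$ appropriately (for instance a cyclic GNS representation, or a direct sum of two irreducible representations separating the spectral projections) so that the two-dimensional invariant subspace genuinely exists. Once that subspace is in hand, the rotation path and the scalar normalization are routine, and the non-invertibility of $T$ follows from the explicit diagonal form above. I would organize the write-up as: reduction to a self-adjoint $a_0$ with $\geq 2$ spectral points, construction of $\pi_0,\xi,\eta$, definition of the rotation path $u_t$, and the final $2\times2$ computation together with the rescaling that places $1$ in the spectrum of $T$.
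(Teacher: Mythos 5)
Your proof is correct, and it takes a genuinely different route from the paper's. The step you rightly flag as delicate --- turning two distinct points $\alpha\neq\beta$ of $\mathrm{Sp}(a_0)$ into actual eigenvalues on orthogonal vectors (an arbitrary representation will not do: multiplication by $x$ on ${\rm L}^2[0,1]$ has empty point spectrum) --- is settled exactly by your GNS suggestion: extend the evaluation states $f\mapsto f(\alpha)$, $f\mapsto f(\beta)$ on $C^*(a_0,1)\cong C(\mathrm{Sp}(a_0))$ to states $\varphi_\alpha,\varphi_\beta$ on $A$ by Hahn--Banach, note that $\Vert\pi_{\varphi_\alpha}(a_0-\alpha)\xi_{\varphi_\alpha}\Vert^2=\varphi_\alpha((a_0-\alpha)^2)=0$, so each GNS cyclic vector is an eigenvector, and take $\pi_0:=\pi_{\varphi_\alpha}\oplus\pi_{\varphi_\beta}$. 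With that, your rotation path, the $2\times 2$ matrix ${\rm diag}(\beta-\alpha+1,\alpha-\beta+1)$ (a kernel vector of which is a kernel vector of $T$ on all of $H$, since ${\rm span}(\xi,\eta)$ is invariant under $\pi_0(a_0)$ and $u_1$, hence under $T$), and the rescaling $a_0\mapsto|\alpha-\beta|^{-1}a_0$ finish the argument; the converse is trivial, as you say, and the paper likewise dismisses it in one line. The paper instead argues by cases on whether $A$ has a non-trivial projection: if it has none, then $A$ is infinite dimensional, \cite[4.6.12]{KRbook} yields a self-adjoint element whose spectrum is connected and infinite, normalized to $[-1,1]$, and the paper runs a GNS construction over Lebesgue measure and conjugates by the flip $t\mapsto -t$ to get a non-zero self-adjoint difference $u\pi_0(a)u^*-\pi_0(a)=-2\pi_0(a)p$, followed by a rescaling in the same spirit as yours; if $A$ has a non-trivial projection $p$, the paper works in $\Lcal(K\ot l^2(\N))$, uses Murray--von Neumann equivalence of infinite projections to produce a unitary carrying $p\ot 1$ onto a strictly smaller subprojection, and connects ${\rm diag}(u^*,u)$ to $1$ by the standard $2\times 2$ rotation trick. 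Your single argument replaces both cases, avoids the appeal to \cite[4.6.12]{KRbook} and to the geometry of infinite projections, and never uses separability (which the paper's projection case invokes for $\Kcal$); what the paper's longer proof buys is constructions adapted to the internal structure of $A$ (connected spectrum, respectively projections), but none of that extra structure is used in the sole application, Theorem \ref{TheoremMatched}, so your leaner proof would serve the paper equally well.
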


\begin{proof}It suffices to prove $(1)\Rightarrow (2)$.
\vspace{0.2cm}

\noindent\textbf{Case 1. $A$ has no non-trivial projections and ${\rm dim}(A)\geq 2$.} In that case, $A$ must be infinite dimensional. By \cite[4.6.12]{KRbook}, there a self-adjoint element $a\in A$ such that the C*-algebra $C^*(a)\subset A$ generated by $a$ is infinite dimensional. Since $C^*(a)$ has no non-trivial projection either, ${\rm Sp}(a)\subset\R$ is connected. Hence, replacing $a$ by $\psi(a)$ if necessary, where $\psi\,:\,{\rm Sp}(a)\rightarrow[-1,1]$ is an homeomorphism, we may and will assume that ${\rm Sp}(a)=[-1,1]$. Let $\omega\in A^*$ be a state obtained by Hahn Banach extension of the state on $C^*(a)=C([-1,1])$ defined by integration with respect to the Lebesgue measure $\lambda$ on $[-1,1]$ and write $(H,\pi_0,\xi)$ the GNS-construction of $\omega$. Define $K:=\overline{\pi_0(C^*(a))\xi}\subset H$, let $p\in\Lcal(H)$ be the orthogonal projection onto $K$ and note that $p\in\pi_0(C^*(a))'$. Consider the unique self-adjoint unitary on $K$ such that $w\pi_0(f(a))\xi=\pi_0(f\circ\psi(a))\xi$, where $\psi\in{\rm Homeo}([-1,1])$ is the $\lambda$-preserving homeomorphism defined by $\psi(t)=-t$, for all $t\in\R$. Consider the self-adjoint unitary on $H$ defined by $u:=wp+(1-p)$. Then, a direct computation shows that $u\pi_0(a)u^*p=-\pi_0(a)p$ and $u\pi_0(a)u^*(1-p)=\pi_0(a)(1-p)$. It follows that $u\pi_0(a)u^*-\pi_0(a)=-2\pi_0(a)p$. In particular, the element $u\pi_0(a)u^*-\pi_0(a)$ is non-zero and self-adjoint so there exist a non-zero $\mu\in{\rm Sp}(u\pi_0(a)u^*-\pi_0(a))$. Hence, defining $a_0:=-\mu^{-1}a$, one has $u\pi_0(a_0)u^*-\pi_0(a_0)+1$ is not invertible. Since $u$ is a self-adjoint unitary and $u\notin\C1$, one has ${\rm Sp}(u)=\{-1,1\}$. Consider the map $h\in C({\rm Sp}(u))$ defined by: $$h(s)=\left\{\begin{array}{lcl}\pi&\text{if}&s=-1,\\0&\text{if}&s=1.\end{array}\right.$$
Since $s=e^{ih(s)}$ for all $s\in{\rm Sp}(u)$, we have $u=e^{ih(u)}$. Note that the map $t\mapsto u_t:=e^{ith(u)}$ is norm continuous, $u_0=1$ and $u_1=u$. It concludes the proof in case $1$.
\vspace{0.2cm}

\noindent\textbf{Case 2. $A$ has a non-trivial projection $p$.} Take any faithful representation $A\subset\Lcal(K)$ on a separable Hilbert space $K$ (here we use that $A$ is separable). Define $\mathcal{K}:=K\ot l^2(\N)$ and note that both $p\ot 1$ and $1\ot 1-p\ot1=(1-p)\ot 1$ are infinite projections in $\Lcal(\Kcal)$. Hence there exists a projection $q\in\Lcal(\Kcal)$ such that $q\leq p\ot 1$, $q\neq p\ot 1$ and $q\sim p\ot 1$. In particular, $1\ot 1-q\geq 1\ot 1-p\ot 1$ so $1\ot 1-q$ is infinite and, since $\Kcal$ is separable, $1\ot 1-q\sim 1\ot 1-p\ot 1$. It follows that there exists a unitary $u\in\Lcal(\Kcal)$ such that $q=u(p\ot 1)u^*$. Let $H:=\Kcal\ot\C^2$ and: $$\pi_0\,:\, A\rightarrow \Lcal(H)=M_2(\Lcal(K\ot l^2(\N))),\quad \pi_0(a)=\left(\begin{array}{cc}a\ot 1&0\\0&a\ot 1\end{array}\right).$$
Let $w_t:=\left(\begin{array}{cc}\cos(t\pi/2)&-\sin(t\pi/2)\\ \sin(t\pi/2)&\cos(t\pi/2)\end{array}\right)\in M_2(\C)$ and consider the norm continuous path of unitaries $t\mapsto u_t\,:\,[0,1]\rightarrow\mathcal{U}(H)$ defined by $u_t:=\left(\begin{array}{cc}u^*&0\\0&1\end{array}\right)w_t\left(\begin{array}{cc}u&0\\0&1\end{array}\right)w_t^*$. One has $u_0=\id_H$ and $u_1=\left(\begin{array}{cc}u^*&0\\0&u\end{array}\right)$. Hence,
$$u_1\pi_0(p)u_1^*-\pi_0(p)+\id_H=\left(\begin{array}{cc}u^*(p\ot 1)u-p\ot 1+1\ot 1&0\\0&q-p\ot 1+1\ot 1\end{array}\right).$$
Since $q\leq p\ot 1$ and $q\neq p\ot 1$, there exists a non-zero $\xi\in\Kcal$ such that $q\xi=0$ and $(p\ot 1)\xi=\xi$. Then,
$(u_1\pi_0(p)u_1^*-\pi_0(p)+\id_H)\left(\begin{array}{c}0\\ \xi\end{array}\right)=\left(\begin{array}{c}0\\ 0\end{array}\right)$. It follows that $u_1\pi_0(p)u_1^*-\pi_0(p)+\id_H$ is not invertible.
\end{proof}

\begin{theorem}\label{TheoremMatched}
If $A$ is a unital C*-algebra with ${\rm dim}(A)\geq 2$ then, for any unital C*-algebra $B$, the pair $(A,B)$ is matched if and only if $B$ is finite dimensional.
\end{theorem}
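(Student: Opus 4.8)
The plan is to treat the two implications separately. One direction requires nothing new: Proposition~\ref{PropMNcase} already shows that $(A,B)$ is matched whenever $B$ is finite-dimensional, with no restriction on $A$. So the whole content is the converse, which I would establish in contrapositive form: granting ${\rm dim}(A)\ge 2$ and $B$ infinite-dimensional, I would show that no universal quantum homomorphism from $A$ to $B$ exists. I therefore assume, for contradiction, that $(A,B)$ is matched, with universal $\rho:A\to B\ot\U$ and $\U=\U(A,B)$ a (separable) C*-algebra.

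First I would feed ${\rm dim}(A)\ge 2$ into Lemma~\ref{LemmaSpecificElement} to obtain a representation $\pi_0:A\to\Lcal(H)$, an element $a_0\in A$, and a norm-continuous path $t\mapsto u_t\in\mathcal{U}(H)$ with $u_0=1$ for which $T:=u_1\pi_0(a_0)u_1^*-\pi_0(a_0)+1$ is not invertible. Since $A$ is separable and the path is norm-continuous, the unital C*-subalgebra $C\subseteq\Lcal(H)$ generated by $\pi_0(A)$ and $\{u_t:t\in[0,1]\}$ is separable; as invertibility in $C$ would propagate to $\Lcal(H)$, the element $T$ stays non-invertible in $C$, so I may safely work inside the separable $C$ and stay within reach of the universal property. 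The path is the decisive feature: $u_t\pi_0(a_0)u_t^*-\pi_0(a_0)+1$ equals $1$ at $t=0$ and is non-invertible at $t=1$, so by openness of the invertible group there is a threshold $t_0$ at which invertibility first fails, and the limiting argument below must be arranged to land exactly on this threshold.

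The crux is to convert this single threshold into a genuine contradiction by distributing the conjugation over the infinitely many independent directions of $B$. Here I would use that an infinite-dimensional $B$ furnishes a bounded, weak*-null sequence $(\omega_n)$ in $B^*$ together with mutually orthogonal elements of $B$, which I would combine to manufacture, for each $n$, a unitary $W_n\in B\ot C$ mixing $B$ and $C$ and a corresponding quantum homomorphism $\pi_n:=W_n(1_B\ot\pi_0(\cdot))W_n^*:A\to B\ot C$. Each $\pi_n$ factors through the fixed algebra $\U$ by the universal property, yielding $*$-homomorphisms $\widetilde{\pi_n}:\U\to C$ with $(\id\ot\widetilde{\pi_n})\rho=\pi_n$. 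Composing with a fixed functional on $C$ produces a bounded net of states $\Omega_n\in\U^*$, and for these I would track the partial slices $b_n:=(\id\ot\Omega_n)\rho(a_0)\in B$, which lie in $B$ precisely because $\rho(a_0)\in B\ot\U$.

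Finally I would pass to a weak*-cluster point and compare two ways of computing the limit of the $b_n$. Computed on the $\U$-side, $\Omega_n\to\Omega$ weak* yields $(\id\ot\Omega)\rho(a_0)\in B$, again an honest element of $B$; computed on the $B$-side, the $b_n$ are designed, via the orthogonal directions and the path, to converge weak* in $B^{**}$ to a functional that records the breakdown at $t_0$ and so escapes $B$. The role of Lemma~\ref{LemmaDual} is exactly to interchange these limits: the weak*-weak* continuity of $B_1(B^*)\to B_1(B^{***})$ forces the two computations to agree, hence forces the escaping limit back into $B$, which is impossible. This contradiction shows $\U$ cannot exist, i.e. $B$ must be finite-dimensional. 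The step I expect to be genuinely hard is the middle construction: choosing the unitaries $W_n\in B\ot C$ so that they truly exploit the infinite-dimensionality of $B$ while remaining synchronized with the path $u_t$, so that the resulting slices $b_n$ cluster at the non-invertible datum $T$ rather than at something harmless; once that synchronization is in place, Lemma~\ref{LemmaSpecificElement} supplies the rigidity and Lemma~\ref{LemmaDual} supplies the limit-interchange, and the contradiction is immediate.
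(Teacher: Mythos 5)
Your skeleton is right as far as it goes: the finite-dimensional direction is indeed just Proposition \ref{PropMNcase}, the converse must be a contradiction argument combining Lemma \ref{LemmaSpecificElement} with the universal property, and your ``dual slicing'' idea --- push each quantum homomorphism $\pi_n$ through the universal property to get a state $\Omega_n:=\phi\circ\widetilde{\pi_n}$ on $\U$ and track $b_n:=(\id\ot\Omega_n)\rho(a_0)\in B$ --- is a legitimate framework (the paper slices on the opposite side: it pairs $\rho(a_0)$ with functionals on $B$ to get elements of $\U$, and gets its contradiction from invertibility rather than from compactness of the state space). But the proposal has a genuine gap, in two places. First, the construction you yourself flag as ``genuinely hard'' --- producing the unitaries $W_n\in B\ot C$ synchronized with the path --- is not an implementation detail; it is the entire content of the theorem, and you never supply it. In the paper this is exactly where infinite-dimensionality of $B$ enters: one takes an infinite-dimensional abelian $D\subset B$, so $D=C(X)$ with $X$ infinite compact, hence non-discrete, picks distinct characters $\chi_n\to\chi_0$, and uses a Urysohn function $f:X\to[0,1]$ to build the single path-twisted homomorphism $a\mapsto\bigl(\chi\mapsto u_{f(\chi)}\pi_0(a)u_{f(\chi)}^*\bigr)\in C(X)\ot\Lcal(H)=D\ot\Lcal(H)\subset B\ot\Lcal(H)$. (Your ``threshold $t_0$'' is a red herring; only the endpoint $t=1$, where non-invertibility holds, is ever used.)

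Second, and more decisively, your endgame cannot work as described. For the fixed element $Z=\rho(a_0)\in B\ot\U$, the map $\Omega\mapsto(\id\ot\Omega)(Z)$ from the state space of $\U$ (weak*) to $B$ (norm) is automatically continuous: approximate $Z$ in norm by a finite sum of elementary tensors and use that slice maps by states are contractions. So if $\Omega_{n}\to\Omega$ weak* along a subnet, then $b_{n}\to(\id\ot\Omega)\rho(a_0)$ \emph{in norm} along that subnet; in particular the weak* limit of the $b_n$ in $B^{**}$ is this honest element of $B$, and there is nothing that can ``escape $B$''. Consequently Lemma \ref{LemmaDual} has no limits to interchange in your scheme (in the paper that lemma serves an entirely different purpose: establishing weak*-to-norm continuity of $\chi\mapsto(\Psi(\chi)\ot\id)(Z)$, where $\Psi(\chi)=\chi\circ E$ for a conditional expectation $E:B^{**}\rightarrow D^{**}$). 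The contradiction in your setup has to come instead from \emph{norm-separation}: choose the homomorphisms $\pi_n$ so that $\Vert b_n-b_m\Vert\geq\delta>0$ for all $n\neq m$, which is incompatible with norm convergence along any subnet. This is achievable with the paper's ingredients: with $D=C(X)$ and $x_n\to x_\infty$ distinct points, take Urysohn functions $g_n$ with $g_n(x_n)=1$ and $g_n\equiv 0$ on $\{x_m\,:\,m\neq n\}\cup\{x_\infty\}$, set $\pi_n(a):=\bigl(\chi\mapsto u_{g_n(\chi)}\pi_0(a)u_{g_n(\chi)}^*\bigr)$, and choose a state $\phi$ on $\Lcal(H)$ with $\vert\phi(u_1\pi_0(a_0)u_1^*-\pi_0(a_0))\vert\geq 1/2$, which exists because $-1$ lies in the spectrum of $u_1\pi_0(a_0)u_1^*-\pi_0(a_0)$ and the spectrum is contained in the closed numerical range; then $b_n(x_n)-b_m(x_n)=\phi(u_1\pi_0(a_0)u_1^*-\pi_0(a_0))$ gives the separation. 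With that repair your route closes into a correct proof whose endgame (compactness versus separation) genuinely differs from the paper's (invertibility of elements near $1$ versus a homomorphic image that is not invertible); as written, however, the decisive construction is missing and the proposed limit-comparison mechanism is one that provably cannot occur.
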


\begin{proof}
Suppose that $B$ is infinite dimensional and $(A,B)$ is matched. Let $\rho\,:\, A\rightarrow B\ot\U$ be the universal quantum homomorphism from $A$ to $B$. Let $\epsilon >0$ be small enough so that any element of  $O:=\{x\in\U\,:\,\Vert x-1\Vert<\epsilon\}$ is invertible.

\vspace{0.2cm}

\noindent By \cite[4.6.12]{KRbook}, there is an infinite dimensional abelian unital C*-subalgebra $D\subset B$. View $N:=D^{**}\subset M:=B^{**}$. Since $D$ is abelian, it has the Lance's weak expectation property i.e. there is a normal conditional expectation $E\,:\,M\rightarrow N$. In particular, for any $\omega\in D^*=N_*$, one has $\omega\circ E\in M_*=B^*$ and the map $\Psi\,:\,D^*\rightarrow B^*$, $\omega\mapsto \omega\circ E$ is linear and contractive. %Moreover, Lemma \ref{LemmaDual} implies that the map $\Psi\,:\, B_1(D^*)\rightarrow B_1(B^*)$ is weak*-weak*-continuous.

\vspace{0.2cm}

\noindent\textbf{Claim.} \textit{For all C*-algebra $C$ and all $Z\in B\ot C$, the map $B_1(D^*)\rightarrow C$, $(\omega\mapsto (\Psi(\omega)\ot\id)(Z))$ is continuous, where the unit ball $B_1(D^*)$ of $D^*$ has the weak*-topology and $C$ has the norm topology.}

\vspace{0.2cm}

\noindent\textit{Proof.} Define $\mathcal{Z}:=\{Z\in B\ot C\,:\,(\omega\mapsto (\Psi(\omega)\ot\id)(Z))\text{ is continuous from }B_1(D^*)\text{ to }C\}$,
where $D^*$ has the weak*-topology and $C$ has the norm topology. Take $Z=b\ot c$, $b\in B$ and $c\in C$ then, $(\Psi(\omega)\ot\id)(Z)=\Psi(\omega)(b)c=\omega(E(b))c$. By Lemma \ref{LemmaDual}, the map $\omega\mapsto \omega(E(b))$ is continuous and it follows that $b\ot c\in\mathcal{Z}$. Since $\mathcal{Z}$ is clearly a vector subspace of $B\ot C$, we deduce that the algebraic tensor product satisfies $B\odot C\subset\mathcal{Z}$. Hence it suffices to show that $\mathcal{Z}$ is norm closed in $B\ot C$. Let $Z_n\in B\ot C$ a sequence converging to $Z\in B\ot C$ and assume that $Z_n\in\mathcal{Z}$ for all $n$. Then, for all $\omega\in B_1(D^*)$,
$$\Vert(\Psi(\omega)\ot\id)(Z_n)-(\Psi(\omega)\ot\id)(Z)\Vert=\Vert (\Psi(\omega)\ot\id)(Z_n-Z)\Vert\leq\Vert Z_n-Z\Vert\rightarrow 0.$$
Hence, the map $(\omega\mapsto (\Psi(\omega)\ot\id)(Z))$ being the uniform limit of the continuous maps $(\omega\mapsto (\Psi(\omega)\ot\id)(Z_n))$ is itself continuous.\hfill\qed

\vspace{0.2cm}

\noindent\textit{End of the proof of Theorem \ref{TheoremMatched}.} By Lemma \ref{LemmaSpecificElement}, there a unital $*$-homomorphism $\pi_0\,:\, A\rightarrow\Lcal(H)$, an element $a_0\in A$ and a norm continuous map $[0,1]\rightarrow\mathcal{U}(H)$, $t\mapsto u_t$, such that $u_0=1$ and $u_1\pi_0(a_0)u_1^*-\pi_0(a_0)+1$ is not invertible. Consider the unitary representation $\pi_t\,:\,A\rightarrow\mathcal{L}(H)$, $\pi_t(a)=u_t\pi_0(a)u_t^*$. Note that, for all $a\in \Lcal(H)$, the map $[0,1]\rightarrow \Lcal(H)$, $t\mapsto\pi_t(a)$ is norm continuous.

\vspace{0.2cm}

\noindent Since $D$ is infinite dimensional, its spectrum ${\rm Sp}(D)$ is infinite and since it is also compact (for the topology induced by the weak* topology on $B_1(D^*)$), ${\rm Sp}(D)$ is not discrete. Hence, there exists $\chi_0\in{\rm Sp}(D)$ and a sequence $(\chi_n)_n$ converging to $\chi_0$ such that $\chi_n\neq\chi_0$ for all $n$. By the Claim, the map ${\rm Sp}(D)\rightarrow \U$, $\chi\mapsto(\Psi(\chi)\ot\id)\rho(a_0)$ is continuous so $(\Psi(\chi_n)\ot\id)\rho(a_0)\rightarrow(\Psi(\chi_0)\ot\id)\rho(a_0)$ in norm in $\U$. In particular, there exists $\chi_1\neq\chi_0$ such that
$\Vert (\Psi(\chi_1)\ot\id)\rho(a_0)-(\Psi(\chi_0)\ot\id)\rho(a_0)\Vert <\epsilon$. Define $x:=(\Psi(\chi_1)\ot\id)\rho(a_0)-(\Psi(\chi_0)\ot\id)\rho(a_0)+1\in\U$. Then,
$\Vert x-1\Vert=\Vert (\Psi(\chi_1)\ot\id)\rho(a_0)-(\Psi(\chi_0)\ot\id)\rho(a_0)\Vert <\epsilon,$
so $x\in O$. By Urysohn's Lemma, there is a continuous function $f\,:\,{\rm Sp}(D)\rightarrow[0,1]$ such that $f(\chi_0)=0$ and $f(\chi_1)=1$. Note that the map ${\rm Sp}(D)\rightarrow \Lcal(H)$, $\chi\mapsto\pi_{f(\chi)}(a)$ is continuous, for all $a\in A$ so it defines an element $\pi'(a)\in C({\rm Sp}(D))\ot\Lcal(H)=D\ot\Lcal(H)\subset B\ot\Lcal(H)$ and $\pi'\,:\,A\rightarrow B\ot\Lcal(H)$ is clearly a unital $*$-homomorphism. Let $\widetilde{\pi}\,:\,\U\rightarrow\Lcal(H)$ be the unique unital $*$-homomorphism such that $(\id\ot\widetilde{\pi})\rho=\pi'$. Since, for all $\chi\in{\rm Sp}(D)$ and all $d\in D$, $\Psi(\chi)(d)=\chi(d)$ we have,
\begin{eqnarray*}
\widetilde{\pi}(x)&=&(\Psi(\chi_1)\ot\id)\pi'(a_0)-(\Psi(\chi_0)\ot\id)\pi'(a_0)+1=\pi_{f(\chi_1)}(a_0)-\pi_{f(\chi_0)}(a_0)+1\\
&=&\pi_1(a_0)-\pi_0(a_0)+1=u_1\pi_0(a_0)u_1^*-\pi_0(a_0)+1.
\end{eqnarray*}
It follows that $\widetilde{\pi}(x)$ is not invertible in $\Lcal(H)$ so $x$ is not invertible in $\U$, a contradiction.\end{proof}

\noindent The previous Theorem shows that the only way to construct a universal quantum homomorphism from $A$ to $B$ for an infinite dimensional $B$ is to extend the notion of quantum homomorphism to a larger category that the category of unital C*-algebras. As we will see, the natural category to do that is the category of unital locally C*-algebras.

\vspace{0.2cm}

\subsection{Preliminaries on projective limits and locally C*-algebras}\label{SectionLocally}
In this section, we recall the construction of projective limit of C*-algebras as well as the definition and basic properties of locally C*-algebras. For the convenience of the reader, we keep this section as self contained as possible by including proofs of known results that we will use later.  Recall that a C*-seminorm on a unital $*$-algebra $A$ is a seminorm $p$ on the vector space $A$ also satisfying:
$$p(ab)\leq p(a)p(b),\quad p(a^*)=p(a),\quad p(1)=1,\quad p(a^*a)=p(a)^2.$$
For more background on locally C*-algebras, we refer the interested reader to \cite{Fr05}.
\subsubsection{Projective limits} Let $(S,\leq)$ be a directed poset, for each $s\in S$, $A_s$ a unital C*-algebra and for each $s\leq t$, $\pi_{s,t}\,:\, A_t\rightarrow A_s$ a unital $*$-homomorphism such that $\pi_{s,s}=\id_{A_s}$ and for all $r\leq s\leq t$ one has $\pi_{r,s}\circ\pi_{s,t}=\pi_{r,t}$. Define the unital $*$-algebra $$A:=\{(a_s)_s\in\prod_{s\in S} A_s\,:\,\pi_{s,t}(a_t)=a_s\text{ for all }s\leq t\}.$$
Note that we have, for all $s\in S$, the canonical projection is a unital $*$-homomorphism $\pi_{s}\,:\, A\rightarrow A_s$ satisfying $\pi_{s,t}\circ\pi_{t}=\pi_{s}$ for all $s\leq t$ and the maps $\pi_{s}$ separate the points i.e. for all $x\in A$ one has $x\neq 0$ $\Rightarrow$ $\exists s\in S$ such that $\pi_{s}(x)\neq 0$. Consider the smallest topology on $A$ for which all the maps $\pi_{s}$ are continuous (this is the restriction of the product topology). Note that the topology on $A$ is the same as the topology defined by the family of C*-seminorms $x\mapsto\Vert\pi_{s}(x)\Vert$ on $A$. Then $A$ becomes a complete Hausdorff topological unital $*$-algebra whose topology is generated by a family of C*-semi-norms (in other words, a locally C*-algebra, see Section \ref{SectionLocallyC*algebra}), called the projective limit of the projective system of unital C*-algebras with unital $*$-homomorphisms $(A_s,\pi_{s,t},S)$ and it is also denoted by $A=\underset{\leftarrow}{\lim}\,A_s$.

\begin{remark}\label{RmkProj} Let $A=\underset{\leftarrow}{\lim}\,A_s$. The following holds.
\begin{enumerate}
\item For any unital C*-algebra $D$ with unital $*$-homomorphisms $\nu_s\,:\, D\rightarrow A_s$ such that $\pi_{s,t}\circ\nu_t=\nu_s$ for all $s\leq t$ there exists a unique unital $*$-homomorphism $\widetilde{\nu}\,:\,D\rightarrow A$ such that $\pi_{s}\circ\widetilde{\nu}=\nu_s$ for all $s\in S$. Indeed, it suffices to define $\widetilde{\nu}(d)=(\nu_s(d))_{s\in S}$. In fact this property is true for any unital $*$-algebra $D$ so in particular, for any unital locally C*-algebra $D$ and, if $D$ is a unital locally C*-algebra and we assume the morphisms $\nu_s$ to be continuous then the morphism $\widetilde{\nu}$ is continuous, by definition of the topology on $A$.
\item For any unital C*-algebra $D$, any unital $*$-homomorphism $\rho\,:\, D\rightarrow A$ is continuous. Indeed, for all $s\in S$, $\pi_{s}\circ\rho\,:\,D\rightarrow A_s$ is a unital $*$-homomorphism between C*-algebras hence continuous.
\end{enumerate}
\end{remark}

\begin{lemma}\label{LemmaReps}
Let $A=\underset{\leftarrow}{\lim} A_s$. For any continuous unital $*$-homomorphism $\pi\,:\,A\rightarrow\Lcal(H)$ there exists $s\in S$ and a unital $*$-homomorphism $\widetilde{\pi}\,:\,\overline{\pi_s(A)}\rightarrow \Lcal(H)$ such that $\pi=\widetilde{\pi}\circ\pi_s$.
\end{lemma}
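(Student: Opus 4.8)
The plan is to show that the continuous $*$-homomorphism $\pi$ is dominated by a single one of the defining seminorms, and then to invoke the universal property of the C*-completion. Recall that the topology on $A$ is generated by the C*-seminorms $p_s(x):=\Vert\pi_s(x)\Vert$, and that these are monotone along the directed set: if $s\leq t$ then $p_s(x)=\Vert\pi_{s,t}(\pi_t(x))\Vert\leq\Vert\pi_t(x)\Vert=p_t(x)$, since $\pi_{s,t}$ is a unital $*$-homomorphism between C*-algebras and hence contractive. Continuity of $\pi$ at $0$ provides a neighbourhood $U$ of $0$ with $\pi(U)\subseteq\{T\in\Lcal(H)\,:\,\Vert T\Vert<1\}$, and $U$ contains a basic neighbourhood $\{x\,:\,p_{s_1}(x)<\delta_1,\dots,p_{s_n}(x)<\delta_n\}$. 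Using directedness of $(S,\leq)$, choose $s\geq s_1,\dots,s_n$; by monotonicity $\{x\,:\,p_s(x)<\delta\}\subseteq U$ for $\delta:=\min_i\delta_i$. Thus there is a single index $s$ and a $\delta>0$ with $p_s(x)<\delta\Rightarrow\Vert\pi(x)\Vert<1$.

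Next I would turn this into a quantitative bound by a homogeneity argument, obtaining $\Vert\pi(x)\Vert\leq\frac{2}{\delta}\,p_s(x)=\frac{2}{\delta}\Vert\pi_s(x)\Vert$ for all $x\in A$: for $x$ with $p_s(x)>0$ apply the implication to $\frac{\delta}{2p_s(x)}x$, and for $p_s(x)=0$ apply it to $tx$ for all $t>0$ to conclude $\pi(x)=0$. In particular $\ker\pi_s\subseteq\ker\pi$, so the assignment $\theta\,:\,\pi_s(A)\rightarrow\Lcal(H)$, $\pi_s(x)\mapsto\pi(x)$, is a well-defined unital $*$-homomorphism on the $*$-subalgebra $\pi_s(A)\subseteq A_s$, and it is bounded with constant $2/\delta$.

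Finally, $\theta$ is a bounded, hence uniformly continuous, $*$-homomorphism defined on the dense $*$-subalgebra $\pi_s(A)$ of the C*-algebra $\overline{\pi_s(A)}$, so it extends uniquely by continuity to a $*$-homomorphism $\widetilde{\pi}\,:\,\overline{\pi_s(A)}\rightarrow\Lcal(H)$; continuity of the algebraic operations guarantees the extension is again multiplicative and $*$-preserving, and it is automatically contractive as a $*$-homomorphism between C*-algebras. By construction $\widetilde{\pi}(\pi_s(x))=\pi(x)$ for all $x\in A$, that is $\widetilde{\pi}\circ\pi_s=\pi$, and $\widetilde{\pi}(1)=\pi(1)=1$. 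The step I expect to be the crux is the first one: collapsing the finitely many seminorm constraints coming from continuity into domination by a single $p_s$, which is exactly where the directedness of $S$ and the monotonicity of the $\pi_{s,t}$ are used. The remainder is the standard extension-by-continuity of a $*$-homomorphism to a C*-completion.
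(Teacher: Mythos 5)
Your proof is correct and follows essentially the same route as the paper's: dominate $\Vert\pi(\cdot)\Vert$ by finitely many of the seminorms $\Vert\pi_t(\cdot)\Vert$, use directedness of $S$ (together with contractivity of the connecting maps) to replace them by a single $\Vert\pi_s(\cdot)\Vert$, deduce $\ker(\pi_s)\subseteq\ker(\pi)$ so that $\pi_s(a)\mapsto\pi(a)$ is well defined and bounded on $\pi_s(A)$, and extend by continuity to $\overline{\pi_s(A)}$. The only difference is one of detail: you spell out, via the neighbourhood-and-scaling argument, the standard fact that the paper simply asserts, namely that continuity of $\pi$ yields a bound $\Vert\pi(a)\Vert\leq C\max_{t\in F}\Vert\pi_t(a)\Vert$ for some finite $F\subset S$ and $C>0$.
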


\begin{proof}
Since the topology on $A$ is generated by the family of seminorms $(a\mapsto\Vert\pi_s(a)\Vert)$, $s\in S$, it follows that there exists a finite subset $F\subset S$ and a $C>0$ such that $\Vert\pi(a)\Vert\leq C\underset{t\in F}{\max}\Vert\pi_t(a)\Vert$ for all $a\in A$. Now, since $S$ is directed, there exists $s\in S$ such that, for all $a\in A$, $\underset{t\in F}{\max}\Vert\pi_t(a)\Vert\leq\Vert\pi_s(a)\Vert$. Hence, $\ker(\pi_s)\subset\ker(\pi)$ so that the map $\pi_s(A)\rightarrow\pi(A)\subset\Lcal(H)$, $\pi_s(a)\mapsto\pi(a)$ is well defined. It is easy to check that this map is a unital $*$-homomorphism. It is continuous so it extends to a unital $*$-homomorphism $\overline{\pi_s(A)}\rightarrow\overline{\pi(A)}\subset\Lcal(H)$.
\end{proof}

\begin{lemma}\label{LemmaDensity} A subset $E\subseteq A$ is dense if and only if $\pi_{s}(E)$ is dense in $\pi_{s}(A)$, for all $s\in S$.
\end{lemma}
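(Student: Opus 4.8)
The plan is to exploit the explicit description of the topology on $A=\underset{\leftarrow}{\lim}\,A_s$ recorded just above: it is generated by the family of C*-seminorms $x\mapsto\|\pi_s(x)\|$, so a basic open neighbourhood of a point $x\in A$ has the form $V=\{y\in A:\|\pi_{s_i}(y)-\pi_{s_i}(x)\|<\epsilon,\ 1\le i\le n\}$ for some finite set $s_1,\dots,s_n\in S$ and some $\epsilon>0$. The first reduction I would make is to collapse such a neighbourhood to one indexed by a \emph{single} seminorm. Since $S$ is directed, choose $s\in S$ with $s\ge s_i$ for all $i$; because each connecting map $\pi_{s_i,s}$ is a $*$-homomorphism between C*-algebras it is contractive, and the compatibility relation $\pi_{s_i,s}\circ\pi_s=\pi_{s_i}$ gives $\|\pi_{s_i}(y)-\pi_{s_i}(x)\|\le\|\pi_s(y)-\pi_s(x)\|$ for every $y$. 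Hence $\{y\in A:\|\pi_s(y)-\pi_s(x)\|<\epsilon\}\subseteq V$, and it suffices throughout to work with neighbourhoods of the shape $\{y\in A:\|\pi_s(y)-\pi_s(x)\|<\epsilon\}$.

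For the forward implication, suppose $E$ is dense in $A$. Fix $s\in S$, set $z=\pi_s(x)\in\pi_s(A)$, and let $\epsilon>0$. The set $\{y\in A:\|\pi_s(y)-z\|<\epsilon\}$ is a nonempty open subset of $A$ (it contains $x$), so by density it meets $E$, producing $e\in E$ with $\|\pi_s(e)-z\|<\epsilon$. Thus $\pi_s(E)$ is dense in $\pi_s(A)$. This is simply the standard fact that a continuous map carries a dense set onto a dense subset of its image, applied to the continuous projection $\pi_s$.

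For the reverse implication, assume $\pi_s(E)$ is dense in $\pi_s(A)$ for every $s\in S$. Given $x\in A$, by the reduction above it is enough to produce, for each $s\in S$ and $\epsilon>0$, an element $e\in E$ with $\|\pi_s(e)-\pi_s(x)\|<\epsilon$. Since $\pi_s(x)\in\pi_s(A)$ and $\pi_s(E)$ is dense in $\pi_s(A)$, such an $e$ exists at once. Hence every basic neighbourhood of $x$ meets $E$, so $x\in\overline{E}$; as $x$ was arbitrary, $E$ is dense.

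The only genuine content is the first reduction step to single-index neighbourhoods, which is exactly where directedness of $S$ and contractivity of the connecting maps $\pi_{s,t}$ enter; once that is in place both implications are immediate. I do not anticipate a real obstacle here, only the bookkeeping of passing between the seminorm description of the topology on $A$ and the notion of density inside the subspaces $\pi_s(A)\subseteq A_s$.
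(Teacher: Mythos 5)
Your proposal is correct and takes essentially the same approach as the paper: the paper packages the approximation as a net $(x_{s,n})_{s,n}$ in $E$ converging to $x_0$, while you argue via basic neighbourhoods, but the crux of both arguments is identical — directedness of $S$ together with contractivity of the connecting maps $\pi_{r,s}$, giving $\Vert\pi_r(y)\Vert\leq\Vert\pi_s(y)\Vert$ for $r\leq s$. You also write out the easy forward implication, which the paper leaves implicit.
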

\begin{proof} Let $x_0\in A$ be any element. Assuming that $\pi_{s}(E)\subseteq \pi_{s}(A)$ is dense, we can find, for any $s\in S$ and $n\geq 1$, an element $x_{s,n}\in E$ such that $\Vert\pi_{s}(x_{s,n}-x_0)\Vert_{A_s}<\frac{1}{n}$. Consider the net $(x_{s,n})_{s,n}$ in $E$. Fix $r\in S$ and let $\epsilon>0$. Let $N\in\N$ with $N>\frac{1}{\epsilon}$. For all $n\geq N$ and $s\geq r$:
\begin{eqnarray*}
\Vert\pi_r(x_{s,n}-x_0)\Vert&=&\Vert\pi_{r,s}\pi_s(x_{s,n}-x_0)\Vert\leq\Vert\pi_s(x_{s,n}-x_0)\Vert<\epsilon.
\end{eqnarray*}
Hence, the net $(x_{s,n})_{s,n}$ in $E$ converges to $x_0$.
\end{proof}

\subsubsection{Locally C*-algebras}\label{SectionLocallyC*algebra} A \textit{unital locally C*-algebra} is a complete Hausdorff topological unital $*$-algebra whose topology is generated by a family of C*-semi-norms. Given a unital locally C*-algebra $A$ we denote by $S(A)$ the directed set of all continuous C*-seminorms on $A$ equipped with the obvious order $p\leq q\Leftrightarrow p(a)\leq q(a)$ $\forall a\in A$. It is easy then to check that for a net $(x_i)_{i\in I}$ in $A$, we have $x_i\rightarrow x$ if and only if $r(x_i)\rightarrow r(x)$ for all $r\in S(A)$. We refer to \cite{Fr05} for more background on locally C*-algebras. As noted above, a projective limit of the form $A=\underset{\leftarrow}{\lim} A_s$ gives an example of a locally C*-algebra.

\vspace{0.2cm}

\noindent Let $p\in S(A)$ and note that $\ker(p)$ is a closed $*$-ideal so that $A_p:=A/\ker(p)$ becomes a unital $*$-algebra with a C*-norm. Let $\pi_p\,:\, A\rightarrow A_p$ the canonical quotient map, it is a unital $*$-homomorphism which is continuous since $p(a)=\Vert\pi_p(a)\Vert$ for all $a\in A$. By definition, for each $p,q\in S(A)$ if $p\leq q$ then there is a unique unital $*$-homomorphism $\pi_{p,q}\,:\, A_q\rightarrow A_p$ such that $\pi_{p,q}\circ\pi_q=\pi_p$. Note that for all $p\in S(A)$, $\pi_{p,p}=\id_{A_p}$ and for all $p,q,r\in S(A)$ if $p\leq q\leq r$ then $\pi_{p,q}\circ\pi_{q,r}=\pi_{p,r}$. Moreover, the maps $\pi_{p,q}$ are surjective.

\begin{lemma}\label{LemmaClosed}
The normed unital $*$-algebra $A_p$ is complete and so it is a C*-algebra.
\end{lemma}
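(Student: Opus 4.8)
The plan is to realize $A_p$ as a quotient of a genuine C*-algebra by a closed ideal, from which completeness is automatic. The difficulty is that completeness of $A$ only guarantees limits for nets that are Cauchy with respect to \emph{every} seminorm in $S(A)$, whereas a Cauchy sequence in $A_p$ is merely $p$-Cauchy; so one cannot naively lift a Cauchy sequence and take a limit in $A$. To get around this I would work with the bounded part
$$b(A):=\Big\{a\in A\ :\ \Vert a\Vert_\infty:=\sup_{q\in S(A)}q(a)<\infty\Big\}.$$

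First I would check that $b(A)$, equipped with $\Vert\cdot\Vert_\infty$, is a C*-algebra. The C*-identity is immediate from the seminorm identities, since $\Vert a^*a\Vert_\infty=\sup_q q(a^*a)=\sup_q q(a)^2=\Vert a\Vert_\infty^2$, and $b(A)$ is a unital $*$-subalgebra. Completeness follows from completeness of $A$: a $\Vert\cdot\Vert_\infty$-Cauchy sequence $(a_n)$ is $q$-Cauchy for every $q\in S(A)$ (as $q\leq\Vert\cdot\Vert_\infty$), hence converges in $A$ to some $a$; fixing $n$ and letting the other index run to infinity (using continuity of each $q$) shows $q(a-a_n)\leq\epsilon$ for all $q$, i.e. $\Vert a_n-a\Vert_\infty\to 0$ and $a\in b(A)$.

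The key step is to show that $\pi_p$ restricts to a \emph{surjection} $b(A)\to A_p$; the idea is to produce bounded lifts by truncation. For this I would use the continuous functional calculus for self-adjoint elements of $A$, which can be quoted from \cite{Fr05} or built by hand: for $a=a^*\in A$ and real polynomials $g_n$ converging locally uniformly to a continuous $f$, the elements $g_n(a)$ are $q$-Cauchy for each $q$, because in the normed $*$-algebra $A_q$ one has $q(h(a))=\Vert h(\pi_q(a))\Vert_{A_q}\leq\sup_{[-q(a),q(a)]}|h|$ for every real polynomial $h$ (computing the norm in the completion $\overline{A_q}$, a C*-algebra, where the self-adjoint $\pi_q(a)$ has spectrum in $[-q(a),q(a)]$); by completeness of $A$ this defines $f(a)\in A$ with $\pi_q(f(a))=f(\pi_q(a))$ for all $q$. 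Now given a self-adjoint $x\in A_p$ with $\Vert x\Vert_p=t$, choose a self-adjoint lift $a=a^*\in A$ (so that $p(a)=t$) and apply $f(s):=\max(-t,\min(s,t))$. Then $q(f(a))\leq t$ for every $q$, so $f(a)\in b(A)$, while $\pi_p(f(a))=f(x)=x$ since the spectrum of $x$ in $\overline{A_p}$ lies in $[-t,t]$, where $f$ is the identity. Splitting a general element of $A_p$ into real and imaginary parts yields surjectivity of $\pi_p\colon b(A)\to A_p$.

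Finally I would conclude: $A_p=\pi_p(b(A))\cong b(A)/\ker(p|_{b(A)})$, a quotient of the C*-algebra $b(A)$ by the closed two-sided ideal $\ker(p|_{b(A)})$, hence a C*-algebra; comparing the two norms (a unital $*$-isomorphism of a C*-algebra onto a normed $*$-algebra satisfying the C*-identity is isometric, since spectra and hence spectral radii of self-adjoint elements are preserved) identifies this C*-norm with $\Vert\cdot\Vert_p$, so $A_p$ is complete. The main obstacle is precisely the surjectivity/truncation step: one must manufacture lifts that are simultaneously controlled in \emph{all} the seminorms of $S(A)$, and the functional calculus together with completeness of $A$ is exactly what converts $p$-control into the required uniform control.
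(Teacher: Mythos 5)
Your proposal is correct, and it takes a genuinely different route from the paper's. The paper proves a quantitative lifting claim: every $a\in A$ with $p(a)<1$ admits $x\in A$ with $\pi_p(x)=\pi_p(a)$ and $q(x)<1$ for \emph{all} $q\in S(A)$, obtained by truncating with a plateau function, namely approximating the elements $\pi_q(a)f(\pi_q(a^*a))\in\overline{A_q}$ and assembling the approximants into a Cauchy net in $A$ (so no functional calculus on $A$ itself is ever invoked); completeness of $A_p$ then follows from a telescoping-series argument applied to a fast Cauchy sequence in $\overline{\pi_p(A)}$. You instead build the continuous functional calculus on $A$ directly (polynomial approximation plus completeness of $A$), truncate a self-adjoint lift by $\max(-t,\min(\cdot,t))$ to land in the bounded part $b(A)$, and conclude abstractly: $A_p$ is the quotient of the C*-algebra $b(A)$ by a closed ideal, hence a C*-algebra. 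The analytic core --- producing lifts controlled in all seminorms simultaneously by a functional-calculus cutoff --- is the same in both arguments; indeed the paper's Claim is precisely the statement that $\pi_p(b(A))$ contains the open unit ball of $A_p$. What your route buys is conceptual clarity and contact with the standard theory of pro-C*-algebras (this is essentially the treatment of Phillips and of Fragoulopoulou); what the paper's route buys is self-containedness: it needs neither a functional calculus on $A$, nor the fact that $b(A)$ is a C*-algebra, nor uniqueness of C*-norms.

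One justification in your last step needs repair. You assert that a unital $*$-isomorphism of a C*-algebra onto a normed $*$-algebra satisfying the C*-identity is isometric ``since spectra and hence spectral radii of self-adjoint elements are preserved''. The statement is true, but the reason given only yields contractivity: an algebra isomorphism preserves the \emph{algebraic} spectrum, while the norm of a self-adjoint element of the incomplete algebra $A_p$ equals its spectral radius computed in the completion $\overline{A_p}$, and the spectrum in $\overline{A_p}$ may a priori be strictly smaller than the algebraic spectrum in $A_p$ (in an incomplete normed algebra the inequality $r(y)\leq\Vert y\Vert$ can fail outright, so norm and algebraic spectral radius cannot be identified). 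The clean fix is standard: compose with the inclusion $A_p\subset\overline{A_p}$ to obtain an injective unital $*$-homomorphism $b(A)/\ker(p\vert_{b(A)})\rightarrow\overline{A_p}$ between genuine C*-algebras; such a map is isometric, and its image is exactly $A_p$, which gives the desired isometry and hence completeness of $A_p$.
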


\begin{proof}
View $A_p\subset\overline{A_p}$ its completion. Note that, for $p\leq q$, the unital $*$-homomorphism $\pi_{p,q}\,:\, A_q\rightarrow A_p\subset \overline{A_p}$ is a contraction and so it extends uniquely to a unital $*$-homomorphism $\pi_{p,q}\,:\,\overline{A_q}\rightarrow\overline{A_p}$. We first prove the following Claim.

\vspace{0.2cm}

\noindent\textbf{Claim.} \textit{$\forall a\in A$ with $p(a)<1$ $\exists x\in A$ such that $\pi_p(x)=\pi_p(a)$ and $q(x)<1$ for all $q\in S(A)$.}

\vspace{0.2cm}

\noindent\textit{Proof of the Claim.} Let $t:=\max\{1/2,p(a)^2\}<1$ and define: $$f\,:\,\R\rightarrow\R_+\quad f(s):=\left\{\begin{array}{lcl}\frac{1+s}{1-t}&\text{if}&-1\leq s\leq -t,\\ \frac{1-s}{1-t}&\text{if}&t\leq s\leq 1,\\
1&\text{if}&-t\leq s\leq t,\\
0&\text{if}&s\leq -1\text{ or }s\geq 1.\end{array}\right.$$
Then $f\in C(\R)$ and since $\Vert\pi_p(a^*a)\Vert=p(a)^2\leq t$, one has $f(\pi_p(a^*a))=1$ in the C*-algebra $\overline{A_p}$. Define, for all $q\in S(A)$ the element $y_q:=\pi_q(a)f(\pi_q(a^*a))\in \overline{A_q}$. By density of $A_q$ in $\overline{A_q}$ and since $\pi_q$ is surjective, there exists for each $q\in S(A)$ and all $n\in\N$, an element $x_{q,n}\in A$ such that $\Vert\pi_q(x_{q,n})-y_q\Vert<1/n$. Let us show that the net $(x_{q,n})_{q\in S(A)}$ is a Cauchy net. Fix $r\in S(A)$ and $\epsilon>0$ and note that, for all $(q,n),(q',n')\in S(A)\times\N$ with $(q,n),(q',n')\geq (r,N)$, where $N\in\N$, $N>2/\epsilon$, one has:
\begin{eqnarray*}
r(x_{q,n}-x_{q',n'})&=&\Vert\pi_r(x_{q,n}-x_{q',n'})\Vert=\Vert\pi_{r,q}\pi_q(x_{q,n})-\pi_{r,q'}\pi_{q'}(x_{q',n'})\Vert\\
&\leq&\Vert\pi_{r,q}\pi_q(x_{q,n})-\pi_{r,q}(y_q)\Vert+\Vert\pi_{r,q}(y_q)-\pi_{r,q'}(y_{q'})\Vert\\
&&+\Vert\pi_{r,q'}(y_{q'})-\pi_{r,q'}\pi_{q'}(x_{q',n'})\Vert\\
&<&\epsilon+\Vert\pi_{r,q}(y_q)-\pi_{r,q'}(y_{q'})\Vert=\epsilon
\end{eqnarray*}
since $\pi_{r,q}(y_q)=\pi_{r,q}(\pi_q(a))\pi_{r,q}(f(\pi_q(a^*a)))=\pi_r(a)f(\pi_{r,q}\pi_q(a^*a))=\pi_r(a)f(\pi_r(a^*a))$ for all $q\in S(A)$ with $q\geq r$. Since $A$ is complete, there exists $x\in A$ such that $\lim x_{q,n}=x$. Note that, for any $q\geq r$, $r(x_{q,n})=\Vert\pi_r(x_{q,n})\Vert=\Vert\pi_{r,q}\pi_q(x_{q,n})\Vert\leq\Vert\pi_q(x_{q,n})\Vert
<1/n+\Vert y_q\Vert$. Moreover,
$\Vert y_q\Vert^2=\Vert y_q^*y_q\Vert=\Vert g(\pi_q(a^*a)\Vert$, where $g\in C(\R)$, $g(s)=sf(s)^2$. A direct computation gives $\underset{s\in\R}{\sup}\vert g(s)\vert=t$ hence, $\Vert y_q\Vert\leq\sqrt{t}$. It follows that, for all $r\in S(A)$, $$r(x)=\lim_{q,n} r(x_{q,n})\leq\sqrt{t}<1.$$
Note that $y_p=\pi_p(a)f(\pi_p(a^*a))=\pi_p(a)$ since $f(\pi_p(a^*a))=1$. Hence, for all $q\geq p$,
$$\pi_{p,q}(y_q)=\pi_{p,q}\pi_q(a)\pi_{p,q}(f(\pi_q(a^*a)))=\pi_p(a)f(\pi_{p,q}\pi_q(a^*a))=\pi_p(a)f(\pi_p(a^*a))=\pi_p(a).$$
It follows that, $\forall q\geq p$, $\Vert\pi_p(x_{q,n})-\pi_p(a)\Vert=\Vert\pi_{p,q}\pi_q(x_{q,n})-\pi_{p,q}(y_q)\Vert\leq\Vert\pi_q(x_{q,n})-y_q\Vert\leq 1/n$. We conclude that $p(x-a)=\lim_{q,n}\Vert\pi_p(x_{q,n})-\pi_p(a)\Vert=0$. Hence, $\pi_p(x)=\pi_p(a)$.

\vspace{0.2cm}

\noindent\textit{End of the proof of the Lemma.} Let $y\in\overline{\pi_p(A)}$ and take a sequence $(y_n)_{n}$ in $\pi_p(A)$ converging to $y$. Dropping to a subsequence if necessary, we have that $\Vert y_{n+1}-y_{n}\Vert<2^{-n}$ for all $n\geq 1$. By the Claim, for all $n\geq 1$, there exists $x_n\in A$ such that $q(x_n)<2^{-n}$ for all $q\in S(A)$ and $\pi_p(x_n)=y_{n+1}-y_{n}$. It follows that $\left(\sum_{k=1}^nx_k\right)_{n\geq 1}$ is a Cauchy sequence in $A$. Let $\sum_{k=1}^{+\infty} x_k\in A$ be its limit and define $x:=x_0+\sum_{k=1}^{+\infty} x_k$, where $x_0\in A$ is such that $\pi_p(x_0)=y_1$. By continuity we have, $\pi_p(x)=y_1+\sum_{k=1}^{+\infty}(y_{k+1}-y_{k})=y$ and hence, we are done.
\end{proof}

\begin{remark}
$S(A)$ is exactly the set of seminorms of the form $(a\mapsto\Vert\pi(a)\Vert)$ for $\pi\,:\, A\rightarrow\Lcal(H)$ a continuous unital $*$-homomorphism. Indeed, any such map is clearly a continuous C*-norm and, if $p\in S(A)$, $A_p:=A/\ker(p)$ is a C*-algebra so can be embedded into $\Lcal(H)$ and the canonical continuous map $\pi_p\,:\,A\rightarrow A_p\subset\Lcal(H)$ satisfies $p(a)=\Vert\pi_p(a)\Vert$ for all $a\in A$.
\end{remark}

\begin{corollary}\label{CorClosed}
Let $A$ be a unital locally C*-algebra and $\pi\,:\,A\rightarrow \Lcal(H)$ be a continuous unital $*$-homomorphism. Then, $\pi(A)$ is closed so it is a C*-algebra.
\end{corollary}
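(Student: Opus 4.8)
The plan is to reduce the statement to the elementary fact that a $*$-homomorphism between genuine C*-algebras has closed range, by factoring $\pi$ through one of the C*-algebras $A_p$ appearing in the projective limit description of $A$.

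First I would recall that $A$, being a unital locally C*-algebra, is the projective limit $\underset{\leftarrow}{\lim}\, A_p$ over the directed set $S(A)$ of continuous C*-seminorms, with canonical surjective quotient maps $\pi_p\,:\,A\rightarrow A_p=A/\ker(p)$. Since $\pi\,:\,A\rightarrow\Lcal(H)$ is a continuous unital $*$-homomorphism, Lemma \ref{LemmaReps} furnishes some $p\in S(A)$ together with a unital $*$-homomorphism $\widetilde{\pi}\,:\,\overline{\pi_p(A)}\rightarrow\Lcal(H)$ satisfying $\pi=\widetilde{\pi}\circ\pi_p$.

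Next I would observe that $\pi_p$ is surjective onto $A_p$ and that, by Lemma \ref{LemmaClosed}, $A_p$ is already complete, i.e. a C*-algebra; hence $\overline{\pi_p(A)}=\pi_p(A)=A_p$ and $\widetilde{\pi}\,:\,A_p\rightarrow\Lcal(H)$ is an honest $*$-homomorphism between C*-algebras. Therefore $\pi(A)=\widetilde{\pi}(\pi_p(A))=\widetilde{\pi}(A_p)$.

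Finally, I would invoke the standard fact that the image of a C*-algebra under a $*$-homomorphism is a C*-subalgebra of the target: indeed $\widetilde{\pi}$ factors through an isometric embedding of the quotient C*-algebra $A_p/\ker(\widetilde{\pi})$, so $\widetilde{\pi}(A_p)$ is complete, hence closed in $\Lcal(H)$. This shows that $\pi(A)$ is closed and is a C*-algebra. I do not expect a genuine obstacle here beyond correctly assembling Lemmas \ref{LemmaReps} and \ref{LemmaClosed}; the only point meriting care is noticing that the closure appearing in Lemma \ref{LemmaReps} is superfluous, precisely because Lemma \ref{LemmaClosed} guarantees that each $A_p$ is already complete.
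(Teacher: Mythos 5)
Your proof is correct, but it takes a slightly different (and longer) route than the paper. The paper's proof is a one-liner that bypasses Lemma \ref{LemmaReps} entirely: it defines the \emph{tailor-made} seminorm $p(a):=\Vert\pi(a)\Vert$, which is a continuous C*-seminorm on $A$ with $\ker(p)=\ker(\pi)$, so that the induced map $A_p\rightarrow\Lcal(H)$ is \emph{isometric} with image exactly $\pi(A)$; Lemma \ref{LemmaClosed} then says $A_p$ is complete, and closedness of $\pi(A)$ follows with no further input. You instead factor $\pi$ through some $A_p$ whose seminorm merely dominates $\Vert\pi(\cdot)\Vert$ up to a constant, which forces you to add a third ingredient -- the classical fact that a $*$-homomorphism between genuine C*-algebras has closed range -- to handle the possibly non-injective $\widetilde{\pi}$. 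Both arguments hinge on Lemma \ref{LemmaClosed}; yours is more modular and exhibits the general principle (every continuous representation of a locally C*-algebra factors through a C*-quotient) that the paper reuses later, e.g.\ in Remark \ref{Rmkreps} and Proposition \ref{CorTensor}, while the paper's choice of $p$ collapses the factorization and the closed-range step into one. Two small remarks on your write-up: first, you invoke the Arens-Michael decomposition (Proposition \ref{PropAM}), which in the paper is proved \emph{after} this corollary; there is no circularity, since the proof of Proposition \ref{PropAM} does not use the corollary, but you could avoid the forward reference altogether by noting that the proof of Lemma \ref{LemmaReps} applies verbatim to any locally C*-algebra (continuity of $\pi$ gives domination by finitely many seminorms in $S(A)$, and directedness of $S(A)$ gives a single dominating $p$). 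Second, your observation that the closure in Lemma \ref{LemmaReps} is superfluous is correct here only because the canonical maps $\pi_p\,:\,A\rightarrow A_p$ of the Arens-Michael system are surjective quotient maps; for a general projective limit the closure is genuinely needed.
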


\begin{proof}
Let $p\in S(A)$ defined by $p(a)=\Vert\pi(a)\Vert$. Then $\ker(p)=\ker(\pi)$ so that $\pi$ can be identified with $\pi_p$ and $\pi(A)$ with $A_p$ which is closed, by Lemma \ref{LemmaClosed}.
\end{proof}

\noindent Consider the projective system $(A_p,\pi_{p,q},S(A))$ of unital C*-algebras, associated to the locally C*-algebra $A$. Then, the following shows the equivalence of $A$ and the projective limit $\underset{\leftarrow}{\lim}\,A_p$ as locally C*-algebras, which is referred to as the \textit{Arens-Michael decomposition} of $A$.

\begin{proposition}\label{PropAM}
The map: $$\psi\,:\,A\rightarrow\underset{\leftarrow}{\lim}\,A_p,\quad a\mapsto (\pi_p(a))_{p\in S(A)}$$ is an isomorphism of unital locally C*-algebras.
\end{proposition}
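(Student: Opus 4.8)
The plan is to verify directly that $\psi$ is a bijective topological $*$-isomorphism, treating the four properties (homomorphism, injectivity, homeomorphism onto its image, surjectivity) one at a time. That $\psi$ is a well-defined unital $*$-homomorphism is immediate: each $\pi_p$ is a unital $*$-homomorphism, and by the defining property of the connecting maps $\pi_{p,q}\circ\pi_q=\pi_p$ for $p\le q$, so the tuple $(\pi_p(a))_{p\in S(A)}$ genuinely satisfies the compatibility conditions defining $\underset{\leftarrow}{\lim}A_p$; multiplicativity, $*$-preservation and unitality then hold componentwise.

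For injectivity I would use that $A$ is Hausdorff with topology generated by $S(A)$: if $a\ne 0$ then $p(a)>0$ for some $p$, i.e. $\pi_p(a)\ne 0$, so $\psi(a)\ne 0$. The topological statements both follow from a single seminorm identity. Writing $q_p$ for the generating C*-seminorm $x\mapsto\Vert\pi_p^{\lim}(x)\Vert$ on $\underset{\leftarrow}{\lim}A_p$ (the $p$-th coordinate norm, $\pi_p^{\lim}$ being the projection onto $A_p$), one has $q_p(\psi(a))=\Vert\pi_p(a)\Vert=p(a)$ for every $a$ and every $p$. Thus $\psi$ pulls the generating seminorms of the target back exactly onto the generating seminorms $S(A)$ of the source; since the topology on each side is by definition induced by its respective seminorm family, $\psi$ is a homeomorphism onto its image, so both $\psi$ and its inverse (on the image) are automatically continuous.

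The one substantive point, and the step I expect to be the crux, is \emph{surjectivity}, where completeness of $A$ enters. Given $(a_p)_p\in\underset{\leftarrow}{\lim}A_p$, I would choose for each $p$ a representative $b_p\in A$ with $\pi_p(b_p)=a_p$ (possible since $\pi_p\,:\,A\rightarrow A_p$ is surjective) and show that the net $(b_p)_{p\in S(A)}$, indexed by the directed set $S(A)$, is Cauchy. Indeed, fix $r\in S(A)$; for any $p,q\ge r$ the projective-limit compatibility $\pi_{r,p}(a_p)=a_r=\pi_{r,q}(a_q)$ gives $\pi_r(b_p)=\pi_{r,p}\pi_p(b_p)=a_r=\pi_r(b_q)$, whence $r(b_p-b_q)=\Vert\pi_r(b_p-b_q)\Vert=0$. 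So the net is Cauchy (in fact eventually constant in each seminorm), and by completeness of $A$ it converges to some $a\in A$. Finally, for each $p$ and all $q\ge p$ one has $\pi_p(b_q)=\pi_{p,q}\pi_q(b_q)=\pi_{p,q}(a_q)=a_p$, so by continuity of $\pi_p$, passing to the limit along the cofinal set $\{q\ge p\}$ yields $\pi_p(a)=a_p$; hence $\psi(a)=(a_p)_p$.

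Here Lemma \ref{LemmaClosed} is needed only to ensure that the $A_p$ are genuine C*-algebras, so that $\underset{\leftarrow}{\lim}A_p$ is the projective limit in the intended sense and $q_p$ is a C*-norm. The real work is the short Cauchy-net argument above, which becomes transparent once the net is indexed by $S(A)$ itself and the compatibility of the tuple is used to force the seminorm differences to vanish; no estimate or approximation is required, unlike in the proof of Lemma \ref{LemmaClosed}.
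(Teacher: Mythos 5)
Your proof is correct and follows essentially the same route as the paper's: the same componentwise check that $\psi$ is a unital $*$-homomorphism, the same Hausdorff/separation argument for injectivity, and the identical surjectivity argument (choose representatives $b_p$ of the coordinates, observe the resulting $S(A)$-indexed net has vanishing seminorm differences above each index, invoke completeness of $A$, and verify the limit hits the given tuple). The only difference is cosmetic, at the topological step: you deduce bicontinuity from the exact seminorm identity $q_p\circ\psi=p$, whereas the paper proves continuity and then openness by chasing basic open sets (using surjectivity there); both arguments rest on the same matching of the seminorm families.
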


\begin{proof} It is straightforward to check that $\psi$ is well defined and is a unital $*$-homomorphism. Since $A$ is Hausdorff, $S(A)$ separates the points which easily implies that $\psi$ is injective. Let $b=(b_p)_{p\in S(A)}\in\underset{\leftarrow}{\lim}A_p$ be any element. For each $p\in S(A)$ choose $a_p\in A$ such that $\pi_p(a_p)=b_p$. Then, the net $(a_p)_{p\in S(A)}$ is a Cauchy net. Indeed, for any $r\in S(A)$, if $p,q\geq r$ then,  $r(a_p-a_q)=\Vert\pi_r(a_p)-\pi_r(a_q)\Vert=\Vert\pi_{r,p}\pi_p(a_p)-\pi_{r,q}\pi_q(a_q)\Vert=\Vert\pi_{r,p}(b_p)-\pi_{r,q}(b_q)\Vert=0$. Since $A$ is complete, there exists $a\in A$ such that, for any $r\in S(A)$, $r(a-a_p)\rightarrow_p0$. Then, for any $r\in S(A)$ and any $\epsilon>0$ there exists $p\in S(A)$, $p\geq r$ such that $r(a-a_p)<\epsilon$. Hence, for all $r\in S(A)$ and all $\epsilon>0$
$$\epsilon>r(a-a_p)=\Vert\pi_r(a)-\pi_r(a_p)\Vert=\Vert\pi_{r}(a)-\pi_{r,p}\pi_p(a_p)\Vert=\Vert\pi_{r}(a)-\pi_{r,p}(b_p)\Vert=\Vert\pi_{r}(a)-b_r\Vert$$
It follows that $\pi_r(a)=b_r$ for all $r\in S(A)$, so $\psi$ is surjective. By definition, $\psi$ is continuous, therefore it suffices to show that it is open. Let $O\subset A$ be a non-empty open set and $x=(x_p)_p\in\psi(O)$. Let $a\in O$ such that $\psi(a)=(\pi_p(a))_p=(x_p)_p$. By definition of the topology on $A$, there is a finite set of continuous seminorms $F\subset S(A)$ and $\epsilon>0$ such that $$V:=\{b\in A\,:\,p(a-b)=\Vert x_p-\pi_p(b)\Vert<\epsilon\text{ for all }p\in F\}\subset O.$$
Consider the open set $W:=\{y=(y_p)_p\in \underset{\leftarrow}{\lim} A_p\,:\,\Vert x_p-y_p\Vert<\epsilon\text{ for all }p\in F\}\subset\underset{\leftarrow}{\lim} A_p$. Then $x\in W$ and $\psi(V)\subset W$. Since $\psi$ is surjective, it is easy to see that $W\subset \psi(V)$. But $\psi(V)\subset\psi(O)$ so $W\subset\psi(O)$ and $\psi(O)$ is indeed open.\end{proof}

\noindent Thus, as shown in the previous proposition, unital locally C*-algebras can always be obtained as projective limits of projective systems of unital C*-algebras. We will freely use this fact when needed.

\subsubsection{Projective limits of unital locally C*-algebras} Let $(S,\leq)$ be a directed poset, for each $s\in S$, $A_s$ a unital locally C*-algebra and for each $s\leq t$, $\pi_{s,t}\,:\, A_t\rightarrow A_s$ a unital continuous $*$-homomorphism such that $\pi_{s,s}=\id_{A_s}$ and for all $r\leq s\leq t$ one has $\pi_{r,s}\circ\pi_{s,t}=\pi_{r,t}$. Define the topological unital $*$-algebra $$A:=\{(a_s)_s\in\prod_{s\in S} A_s\,:\,\pi_{s,t}(a_t)=a_s\text{ for all }s\leq t\},$$
where the topology is given by the product topology i.e. the smallest topology for which the canonical projections $\pi_{s}\,:\, A\rightarrow A_s$ are continuous, for all $s\in S$. Note that $\pi_{s,t}\circ\pi_{t}=\pi_{s}$ for all $s\leq t$ and the maps $\pi_{s}$ separate the points i.e. for all $x\in A$ one has $x\neq 0$ $\Rightarrow$ $\exists s\in S$ such that $\pi_{s}(x)\neq 0$. Note that the topology on $A$ is the same as the topology defined by the family of C*-seminorms $x\mapsto p(\pi_{s}(x))$ on $A$ (for $p\in S(A_s)$ and $s\in S$) and it is easy to check that $A$ is indeed complete and Hausdorff. Thus, $A$ is a locally C*-algebra, called the projective limit of the projective system of unital locally C*-algebras with unital continuous $*$-homomorphisms $(A_s,\pi_{s,t},S)$ and it is also denoted by $A=\underset{\leftarrow}{\lim}\,A_s$.

\begin{remark}\label{Rmkreps}
Note that for projective limits of unital locally C*-algebras, a result similar to Lemma \ref{LemmaReps} holds. In other words, for a projective limit of unital locally C*-algebras $A=\underset{\leftarrow}{\lim}\,A_s$ and given any continuous unital $*$-homomorphism $\pi:A\rightarrow \Lcal(H)$, there exists $s\in S$ and a unital, continuous, $*$-homomorphism $\Tilde{\pi}:\overline{\pi_s(A)}\rightarrow \Lcal(H)$ such that $\pi=\widetilde{\pi}\circ\pi_s$. The proof is similar to the proof of Lemma \ref{LemmaReps}, whereby we first obtain finitely many $s_1,...,s_n$ in $S$ and corresponding semi-norms $p_1,...,p_n$ on $A_{s_1},...,A_{s_n}$, respectively, such that for any $a\in A$, $\|\pi(a)\|\leq C$max$_{i=1,...,n}(p_i(\pi_{s_i}(a)))$. Choosing $s\in S$ such that $s\geq s_i$ for all $i=1,...,n$, we have that the map $\pi_s(a)\mapsto \pi(a)$ is well defined as if $\pi_s(a)=0$ then $p_i(\pi_{s_i}(a))=0$ for all $i=1,...,n$ and thus, $\pi(a)=0$, which defines the map $\widetilde{\pi}$ on $\pi_s(A)$. Using the inequality $\|\pi(a)\|\leq C$max$_{i=1,...,n}(p_i(\pi_{s_i}(a)))$ and completeness of $\Lcal(H)$, $\widetilde{\pi}$ can be extended uniquely to $\overline{\pi_s(A)}$ and having the property that $\|\widetilde{\pi}(t)\|\leq C$max$_{i=1,...,n}(p_i(\pi_{s_i,s}(t)))$ for all $t$ in $\overline{\pi_s(A)}$. It is then easily checked $\widetilde{\pi}$ is a unital, continuous $*$-homomorphism and by construction, $\pi=\widetilde{\pi}\circ\pi_s$.
\end{remark}

\subsubsection{Minimal tensor product of unital locally C*-algebras} Let us now recall the minimal tensor product construction of unital locally C*-algebras. Let $A,B$ be two unital locally C*-algebras and consider the unital $*$-algebra given by the algebraic tensor product $A\odot B$ with the topology $\tau_{{\rm min}}$ generated by the family of C*-seminorms given by $$A\odot B\rightarrow \R_+,\quad x\mapsto\Vert (\pi\ot\rho)(x)\Vert_{\Lcal(H\ot K)},$$ for $\pi\,:\,A\rightarrow\Lcal(H)$ and $\rho\,:\,B\rightarrow\Lcal(K)$ continuous unital $*$-homomorphism. Then, its completion as a topological vector space is a unital locally C*-algebra denoted by $A\ot B$ and called the minimal tensor product of $A$ and $B$. Note that, by definition, $\tau_{\rm min}$ is the smallest topology of $A\odot B$ making the morphisms $\pi\ot\rho\,:\, A\odot B\rightarrow \Lcal(H\ot K)$ continuous, for all continuous unital $*$-homomorphism $\pi\,:\, A\rightarrow \Lcal(H)$ and $\rho\,:\, B\rightarrow \Lcal(K)$. Moreover, if $A$ and $B$ are C*-algebras we recover the minimal tensor product of C*-algebras.

\begin{proposition}\label{PropMinTens}
Let $A_1,A_2,B_1,B_2$ be any unital locally C*-algebras and $\rho_k\,:\, A_k\rightarrow B_k$ be a continuous unital $*$-homomorphism for $k=1,2$. There exists a unique continuous $*$-homomorphism $\rho_1\ot\rho_2\,:\, A_1\ot A_2\rightarrow B_1\ot B_2$ such that:
$$(\rho_1\ot\rho_2)(a_1\ot a_2)=\rho_1(a_1)\ot\rho_2(a_2)\text{ for all }a_k\in A_k, k=1,2.$$
\end{proposition}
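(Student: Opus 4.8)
The plan is to build the map first on the algebraic tensor product and then extend it by continuity, the whole argument hinging on the observation that the generating seminorms match up exactly under precomposition. First I would invoke the universal property of the algebraic tensor product of vector spaces to obtain a linear map $\rho_1\odot\rho_2\,:\,A_1\odot A_2\rightarrow B_1\odot B_2$ determined by $a_1\ot a_2\mapsto\rho_1(a_1)\ot\rho_2(a_2)$. Since each $\rho_k$ is a unital $*$-homomorphism, a routine check on elementary tensors shows that $\rho_1\odot\rho_2$ is a unital $*$-homomorphism with values in $B_1\odot B_2\subset B_1\ot B_2$.

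The key step is to prove that $\rho_1\odot\rho_2$ is continuous from $(A_1\odot A_2,\tau_{\rm min})$ to $B_1\ot B_2$. By definition, $\tau_{\rm min}$ on $B_1\ot B_2$ is generated by the C*-seminorms $q_{\pi',\sigma'}\,:\,y\mapsto\Vert(\pi'\ot\sigma')(y)\Vert$, where $\pi'\,:\,B_1\rightarrow\Lcal(H)$ and $\sigma'\,:\,B_2\rightarrow\Lcal(K)$ range over the continuous unital $*$-homomorphisms. Fixing such a pair, I would set $\pi:=\pi'\circ\rho_1\,:\,A_1\rightarrow\Lcal(H)$ and $\sigma:=\sigma'\circ\rho_2\,:\,A_2\rightarrow\Lcal(K)$; these are again continuous unital $*$-homomorphisms, being compositions of such. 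Evaluating on elementary tensors gives $(\pi'\ot\sigma')\circ(\rho_1\odot\rho_2)=\pi\ot\sigma$, so that for every $x\in A_1\odot A_2$
\[
q_{\pi',\sigma'}\big((\rho_1\odot\rho_2)(x)\big)=\Vert(\pi\ot\sigma)(x)\Vert,
\]
which is precisely one of the generating C*-seminorms of $\tau_{\rm min}$ on $A_1\odot A_2$. Hence every generating seminorm of the target pulls back to a generating seminorm of the source, and $\rho_1\odot\rho_2$ is continuous.

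Finally, since $B_1\ot B_2$ is complete and $A_1\odot A_2$ is dense in $A_1\ot A_2$, the continuous map $\rho_1\odot\rho_2$ extends uniquely to a continuous linear map $\rho_1\ot\rho_2\,:\,A_1\ot A_2\rightarrow B_1\ot B_2$. Because multiplication and involution are continuous in a locally C*-algebra and $\rho_1\odot\rho_2$ is multiplicative and $*$-preserving on the dense subalgebra, the extension remains a unital $*$-homomorphism, and it has the prescribed values on elementary tensors by construction. Uniqueness of such a continuous $*$-homomorphism is then immediate from the density of $A_1\odot A_2$. I do not expect a genuine obstacle here: the only point requiring care is the continuity estimate, and it collapses to the clean seminorm identity above once one notes that precomposing a representation of $B_k$ with the continuous map $\rho_k$ again yields a continuous representation of $A_k$.
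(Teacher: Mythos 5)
Your proposal is correct and follows essentially the same route as the paper: both prove continuity of the algebraic map $\rho_1\odot\rho_2$ by observing that precomposing continuous representations $\pi_k$ of $B_k$ with $\rho_k$ yields continuous representations of $A_k$, so each generating seminorm of the target pulls back to a generating seminorm of $\tau_{\rm min}$ on $A_1\odot A_2$, and then both extend by density and completeness. The only difference is that you spell out the routine verifications (the $*$-homomorphism property of the extension and uniqueness) that the paper leaves implicit.
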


\begin{proof}
Since $A_1\odot A_2$ is dense in $A_1\ot A_2$, $B_1\ot B_2$ is complete and $\rho_1\otimes\rho_2$ is well defined and linear on $A_1\odot A_2$, it suffices to show that $\rho_1\ot\rho_2\,:\, A_1\odot A_2\rightarrow B_1\otimes B_2$ is $\tau_{\rm min}$-continuous. To this end, let $\pi_k\,:\, B_k\rightarrow\Lcal(H_k)$, $k=1,2$ be continuous unital $*$-homomorphisms. Then $\pi_k\circ\rho_k\,:\,A_k\rightarrow\Lcal(H_k)$ is continuous, so $\pi_1\circ\rho_1\ot\pi_2\circ\rho_2=(\pi_1\ot\pi_2)(\rho_1\ot\rho_2)\,:\,A_1\odot A_2\rightarrow\Lcal(H_1\ot H_2)$ is $\tau_{\rm min}$-continuous. It follows that $\rho_1\ot\rho_2$ is continuous.\end{proof}

\begin{proposition}\label{CorTensor}
Let $A,B$ be two unital locally C*-algebras. For any $\omega\in A^*$, the mapping $x\mapsto (\omega\ot\id)(x)$ on $A\odot B$ to $B$ extends to a linear, continuous map from $A\otimes B$ to $B$ which satisfies the property that for any unital locally C*-algebra $C$ and any continuous unital $*$-homomorphism $\rho\,:\, B\rightarrow C$, $\rho((\omega\ot\id)(x))=(\omega\ot\id)(\id\ot\rho)(x)$. A similar statement holds for the mapping defined as $x\mapsto (\id\ot\omega)(x)$, where $\omega\in B^*$.
\end{proposition}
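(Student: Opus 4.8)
The plan is to reduce everything to the corresponding fact for C*-algebras via the Arens--Michael decomposition, exploiting that slice maps are bounded on minimal tensor products of C*-algebras. I would define $(\omega\ot\id)$ on the algebraic tensor product $A\odot B$ by $(\omega\ot\id)(a\ot b)=\omega(a)b$; this is manifestly well defined and linear, and the content of the statement is to upgrade it to a continuous map on the completion $A\ot B$ and then to verify the naturality identity.

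First I would record that, since $\omega$ is continuous and the topology of $A$ is generated by C*-seminorms, there is a single $p\in S(A)$ and a constant $C>0$ with $|\omega(a)|\le C\,p(a)$ for all $a$ (here I use that $S(A)$ is directed). As $p(a)=\Vert\pi_p(a)\Vert$, the inclusion $\ker(\pi_p)\subseteq\ker(\omega)$ lets $\omega$ factor as $\omega=\omega_p\circ\pi_p$ for a bounded functional $\omega_p$ on $A_p$ --- a genuine C*-algebra by Lemma \ref{LemmaClosed} --- with $\Vert\omega_p\Vert\le C$.

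The central step is the continuity estimate. Fix $q\in S(B)$ with quotient $\pi_q\colon B\to B_q$. A direct check on elementary tensors gives, for $x\in A\odot B$,
\[
\pi_q\bigl((\omega\ot\id)(x)\bigr)=(\omega_p\ot\id)\bigl((\pi_p\ot\pi_q)(x)\bigr),
\]
where on the right I read $(\omega_p\ot\id)$ as the right slice map associated to the functional $\omega_p$ on the minimal tensor product of C*-algebras $A_p\ot B_q$. Invoking the standard bound $\Vert(\omega_p\ot\id)(y)\Vert\le\Vert\omega_p\Vert\,\Vert y\Vert$ for such slice maps, I obtain
\[
q\bigl((\omega\ot\id)(x)\bigr)=\bigl\Vert\pi_q((\omega\ot\id)(x))\bigr\Vert\le C\,\Vert(\pi_p\ot\pi_q)(x)\Vert .
\]
Embedding $A_p,B_q$ faithfully into some $\Lcal(H),\Lcal(K)$, the spatial norm on $A_p\ot B_q$ is the operator norm, so the right-hand side is $C$ times one of the defining C*-seminorms of $\tau_{\rm min}$ on $A\odot B$. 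As $q\in S(B)$ was arbitrary, $(\omega\ot\id)$ is $\tau_{\rm min}$-continuous, hence extends uniquely to a continuous linear map $A\ot B\to B$ by density of $A\odot B$ and completeness of $B$.

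For naturality, on an elementary tensor one has $\rho((\omega\ot\id)(a\ot b))=\omega(a)\rho(b)=(\omega\ot\id)((\id\ot\rho)(a\ot b))$, and both $\rho\circ(\omega\ot\id)$ and $(\omega\ot\id)\circ(\id\ot\rho)$ are continuous on $A\ot B$ (using that $\id\ot\rho\colon A\ot B\to A\ot C$ is continuous by Proposition \ref{PropMinTens} together with the continuity just established), so they agree everywhere by density. The statement for $(\id\ot\omega)$ with $\omega\in B^*$ follows by the symmetric argument. The only real obstacle is the continuity estimate, and it dissolves once $\omega$ is factored through a single continuous C*-seminorm and the C*-algebraic slice-map bound is applied blockwise over $S(B)$; the remainder is routine density and naturality bookkeeping.
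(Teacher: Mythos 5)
Your proof is correct and follows essentially the same route as the paper's: factor $\omega$ through a single quotient C*-algebra $A_p$ of the Arens--Michael decomposition, reduce blockwise over $q\in S(B)$ to the bounded slice map $(\omega_p\ot\id)$ on the C*-algebraic minimal tensor product $A_p\ot B_q$, extend by density and completeness, and verify naturality on elementary tensors plus density. Your explicit constant $C$ in $|\omega(a)|\le C\,p(a)$ is in fact slightly more careful than the paper's statement $|\omega(a)|\le p(a)$ (which cannot generally be arranged, as rescaling destroys the C*-seminorm property), but this changes nothing in the argument.
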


\begin{proof}
Since $\omega$ is linear and continuous between locally convex spaces, thus there exists $p\in S(A)$ such that $\vert\omega(a)\vert\leq p(a)$ for all $a\in A$, as in the proof of Lemma \ref{LemmaReps}. Then $\ker(p)\subset\ker(\omega)$ and there exists a unique linear $\widetilde{\omega}\,:\, A_p=A/\ker(p)\rightarrow\C$ such that $\widetilde{\omega}\circ\pi_p=\omega$. It is clear that $\widetilde{\omega}$ is continuous on the C*-algebra $A_p$. By definition, for each $q\in S(B)$ with canonical continuous morphism $\rho_q\,:\, B\rightarrow B_q=B/\ker(q)$, the unital $*$-homomorphism $\pi_p\ot\rho_q\,:\,A\odot B\rightarrow A_p\ot B_q$ is $\tau_{\rm min}$-continuous, hence, the linear map $\omega\ot\rho_q=(\widetilde{\omega}\ot\id)(\pi_p\ot\rho_q)\,:\, A\odot B\rightarrow B_q$ is $\tau_{\rm min}$-continuous, for all $q\in S(B)$. It follows that the well defined linear map $\omega\ot\id\,:\, A\odot B\rightarrow B$ extends to a continuous linear map $\omega\ot\id\,:\, A\ot B\rightarrow B$. It is easy to check that $(\omega\ot\id)(x)$, for $x\in A\odot B$, satisfies the property of the statement and thus, also for $x\in A\ot B$ by density and continuity.\end{proof}

\noindent Recall that $\omega\in A^*$ is called positive if $\omega(a^*a)\geq 0$ for all $a\in A$.

\begin{remark}
If $\omega\in A^*$ and $\mu\in B^*$ then there exists a unique $\omega\ot\mu\in (A\ot B)^*$ such that $(\omega\ot\mu)(a\ot b)=\omega(a)\mu(b)$ for all $a\in A$, $b\in B$. Moreover, if both $\omega$ and $\mu$ are positive then $\omega\ot\mu$ is positive. Indeed, we known that there exists $p\in S(A)$, $q\in S(B)$ , $\widetilde{\omega}\in A_p^*$ and $\widetilde{\mu}\in B_q^*$ such that $\omega=\widetilde{\omega}\pi_p $ and $\mu=\widetilde{\mu}\rho_q$. Then $\pi_p\ot\rho_q\,:\,A\ot B\rightarrow A_p\ot B_q$ is a continuous unital $*$-homomorphism and we can define $\omega\ot\mu:=(\widetilde{\omega}\ot\widetilde{\mu})(\pi_p\ot\rho_q)\in (A\ot B)^*$. The results now follow.
\end{remark}

\noindent Let now $A=\underset{\leftarrow}{\lim}{A_s}$ ($s\in S$) be a projective limit of unital locally C*-algebra, with connecting maps denoted $\pi_{s',s}\,:\, A_s\rightarrow A_{s'}$ for $s'\leq s$ and the projective maps denoted $\pi_s:A\rightarrow A_s$, for all $s\in S$. Similarly, let $B=\underset{\leftarrow}{\lim}\,B_t$ ($t\in T$) also be a projective limit of unital locally C*-algebra, with connecting maps $\rho_{t',t}\,:\,B_t\rightarrow B_{t'}$ for $t'\leq t$ and projective maps $\rho_t:B\rightarrow B_t$ for all $t\in T$. Consider the projective system of unital locally C*-algebras $(A_s\ot B_t)_{(s,t)\in S\times T}$ where the connecting maps are $$\psi_{(s',t'),(s,t)}:=\pi_{s',s}\ot\rho_{t',t}\,:\,A_s\ot B_t\rightarrow A_{s'}\ot B_{t'}$$ for $(s',t')\leq (s,t)$ (i.e. $s'\leq s$ and $t'\leq t$), defined as in Proposition \ref{PropMinTens} and with the canonical projective maps denoted $\psi_{(s_0,t_0)}\,:\,\underset{\leftarrow}{\lim} (A_s\ot B_t)\rightarrow A_{s_0}\ot B_{t_0}$, for any $s_0\in S, t_0\in T$.

\begin{proposition}\label{PropAMTensor} Let $A=\underset{\leftarrow}{\lim}{A_s}$ and $B=\underset{\leftarrow}{\lim}\,B_t$ be projective limits of unital locally C*-algebras as above. Assume that the projective maps $\pi_s$, $\rho_t$ are surjective for all $s\in S$ and $t\in T$. Then the map:
$$\psi\,:\,A\ot B\rightarrow\underset{\leftarrow}{\lim} A_s\ot B_t,\,\,x\mapsto ((\pi_s\ot\rho_t)(x))_{(s,t)}$$
is an isomorphism of unital locally C*-algebras. %Under this identification, the canonical projection $\psi_{(s,t)}$ is $\pi_s\ot\rho_t\,:\, A\ot B\rightarrow A_s\ot B_t$, where $\pi_s\,:\, A\rightarrow A_s$ and $\rho_t\,:\,B\rightarrow B_t$ are the canonical projections. 
\end{proposition}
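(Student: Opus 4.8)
The plan is to show that $\psi$ is a bijective homeomorphism which is a unital $*$-homomorphism, hence an isomorphism of locally C*-algebras. First I would check that $\psi$ is well defined and continuous. The maps $\pi_s\ot\rho_t\,:\,A\ot B\rightarrow A_s\ot B_t$ supplied by Proposition \ref{PropMinTens} are continuous unital $*$-homomorphisms, and on the dense subalgebra $A\odot B$ one has $(\pi_{s',s}\ot\rho_{t',t})\circ(\pi_s\ot\rho_t)=\pi_{s'}\ot\rho_{t'}$ whenever $(s',t')\leq(s,t)$; by continuity and density this holds on all of $A\ot B$, so the family $(\pi_s\ot\rho_t)$ is compatible with the connecting maps $\psi_{(s',t'),(s,t)}=\pi_{s',s}\ot\rho_{t',t}$. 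The universal property of the projective limit (Remark \ref{RmkProj}, which applies equally to projective limits of locally C*-algebras) then yields a unique continuous unital $*$-homomorphism $\psi$ with $\psi_{(s,t)}\circ\psi=\pi_s\ot\rho_t$.

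Next I would establish simultaneously that $\psi$ is injective and a topological embedding by comparing the two generating families of C*-seminorms. The topology of $A\ot B$ is generated by $x\mapsto\Vert(\pi\ot\rho)(x)\Vert$ for continuous unital $*$-homomorphisms $\pi\,:\,A\rightarrow\Lcal(H)$ and $\rho\,:\,B\rightarrow\Lcal(K)$. By Remark \ref{Rmkreps} every such $\pi$ factors as $\pi=\widetilde{\pi}\circ\pi_s$ and every such $\rho$ as $\rho=\widetilde{\rho}\circ\rho_t$ for suitable $s,t$ (surjectivity of $\pi_s,\rho_t$ together with completeness of $A_s,B_t$ gives $\overline{\pi_s(A)}=A_s$ and $\overline{\rho_t(B)}=B_t$, so $\widetilde{\pi}\,:\,A_s\rightarrow\Lcal(H)$, $\widetilde{\rho}\,:\,B_t\rightarrow\Lcal(K)$). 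Hence $\Vert(\pi\ot\rho)(x)\Vert=p\big((\pi_s\ot\rho_t)(x)\big)=p\big(\psi_{(s,t)}(\psi(x))\big)$ for the continuous C*-seminorm $p:=\Vert(\widetilde{\pi}\ot\widetilde{\rho})(\cdot)\Vert$ on $A_s\ot B_t$, while conversely each $p\circ(\pi_s\ot\rho_t)$ is continuous on $A\ot B$. Thus $\psi$ pulls the defining seminorms of $\underset{\leftarrow}{\lim}A_s\ot B_t$ back exactly to those of $A\ot B$, so $\psi$ is a topological embedding, and injective since $A\ot B$ is Hausdorff.

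The surjectivity of $\psi$ is the crux. The key elementary fact I would isolate is that a continuous unital $*$-homomorphism $\phi$ from a locally C*-algebra into a \emph{C*-algebra} has closed image: the seminorm $a\mapsto\Vert\phi(a)\Vert$ is a continuous C*-seminorm, so by Lemma \ref{LemmaClosed} $\phi$ factors through the C*-algebra quotient $A_p$ as an isometric embedding, whose range is closed. Realizing each $A_s$ and $B_t$ through its Arens--Michael decomposition (Proposition \ref{PropAM}) as $A_s=\underset{\leftarrow}{\lim}_p(A_s)_p$, $B_t=\underset{\leftarrow}{\lim}_q(B_t)_q$ over C*-algebras, the composites $\alpha_{s,p}\,:\,A\rightarrow(A_s)_p$ and $\beta_{t,q}\,:\,B\rightarrow(B_t)_q$ are surjective (composites of surjections), so by the closed-image fact each $\alpha_{s,p}\ot\beta_{t,q}\,:\,A\ot B\rightarrow(A_s)_p\ot(B_t)_q$ has closed image containing the dense subalgebra $(A_s)_p\odot(B_t)_q$, hence is surjective onto this C*-algebra.

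With surjectivity onto the C*-quotients in hand, I would imitate the Cauchy-net argument of Proposition \ref{PropAM}, but carried out over the combined directed index set of all quadruples $(s,t,p,q)$ with $p\in S(A_s)$, $q\in S(B_t)$. Given a compatible family $b=(b_{s,t})\in\underset{\leftarrow}{\lim}A_s\ot B_t$, the images $c_{s,t,p,q}$ of $b_{s,t}$ in $(A_s)_p\ot(B_t)_q$ form a compatible family over this index; choosing preimages $x_{s,t,p,q}\in A\ot B$ under the surjections $\alpha_{s,p}\ot\beta_{t,q}$, one checks using the factorization of the previous paragraph that any generating seminorm of $A\ot B$ is dominated by some $\Vert(\alpha_{s_0,p_0}\ot\beta_{t_0,q_0})(\cdot)\Vert$, so that for indices above $(s_0,t_0,p_0,q_0)$ compatibility forces the relevant differences to vanish; the net is therefore Cauchy, and its limit $x$ satisfies $(\alpha_{s,p}\ot\beta_{t,q})(x)=c_{s,t,p,q}$ for all indices, whence $(\pi_s\ot\rho_t)(x)=b_{s,t}$ by Hausdorffness of $A_s\ot B_t$. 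Combined with the embedding of the second paragraph this gives that $\psi$ is an isomorphism of locally C*-algebras. The main obstacle is precisely this surjectivity: the image of $\pi_s\ot\rho_t$ need not be closed when $A_s\ot B_t$ is only a locally C*-algebra, which is exactly why the argument must be pushed down to the C*-level quotients—where the closed-image fact applies—before the lifting can be performed, and where the surjectivity hypothesis on $\pi_s,\rho_t$ is essential.
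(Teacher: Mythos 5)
Your proof is correct, and its overall architecture --- continuity from the universal property, injectivity and the topological identification via factoring representations through the $A_s$, $B_t$ (Remark \ref{Rmkreps}), and a Cauchy-net construction over an enlarged index set for surjectivity --- matches the paper's. The genuine difference is in how the surjectivity step is executed. The paper indexes its net by tuples $(s,t,p,n)$ with $p\in S(A_s\ot B_t)$ and $n\in\N$, and takes \emph{approximate} lifts $y_{s,t,p,n}\in A\odot B$ with $p(x_{s,t}-(\pi_s\ot\rho_t)(y_{s,t,p,n}))<1/n$; this needs only the algebraic surjectivity of $\pi_s\ot\rho_t$ onto $A_s\odot B_t$ together with density of $A_s\odot B_t$ in $A_s\ot B_t$, and an $\epsilon$-estimate then shows the net is Cauchy and that its limit is an exact preimage. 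You instead descend to genuine C*-algebras: using the closed-image fact (your extension of Corollary \ref{CorClosed} and Lemma \ref{LemmaClosed} to arbitrary C*-algebra targets) and the Arens--Michael decompositions of each $A_s$ and $B_t$ (Proposition \ref{PropAM}), the maps $A\ot B\rightarrow (A_s)_p\ot(B_t)_q$ are honestly surjective, so you can take \emph{exact} lifts and obtain a net that is eventually constant along each generating seminorm. Your route buys a cleaner limit argument with no $\epsilon$-bookkeeping and isolates a reusable principle (continuous images in C*-algebras are closed, so dense range implies surjectivity); its cost is the extra layer of C*-quotients --- a four-parameter directed set whose order and directedness (via maxima of C*-seminorms) you should spell out --- which the paper avoids by working with seminorms on $A_s\ot B_t$ itself. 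Your embedding argument via pullback of seminorm families is the seminorm-language counterpart of the paper's openness argument, equivalent but arguably tidier; note it uses the surjectivity hypothesis exactly where you write $\overline{\pi_s(A)}=A_s$ before applying Remark \ref{Rmkreps}. One small point to make explicit: the final conclusion $(\pi_s\ot\rho_t)(x)=b_{s,t}$ needs that the quotient maps $A_s\ot B_t\rightarrow (A_s)_p\ot(B_t)_q$ separate points of $A_s\ot B_t$, which follows by factoring each generating seminorm of $A_s\ot B_t$ through such a quotient (contractivity of $*$-homomorphisms between C*-algebras then gives domination) --- plain ``Hausdorffness'' is shorthand for this.
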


\begin{proof}
By definition, $\psi$ is a continuous unital $*$-homomorphism. We will now show that $\psi$ is a bijective and open map. To deduce that $\psi$ is surjective, consider $x=(x_{s,t})_{(s,t)}\in\underset{\leftarrow}{\lim} A_s\ot B_t$, $x_{s,t}=\psi_{(s,t)}(x)$. Consider the set $I:=\{(s,t,p,n): s\in S, t\in T, p\in S(A_s\ot B_t), n\in \mathbb{N})\}$, with the partial ordering given by $(s_0,t_0,p_0,n_0)\leq (s,t,p,n)$ if $s_0\leq s, t_0\leq t, p=p_0\circ(\pi_{s_0,s}\ot \rho_{t_0,t}), n_0\leq n$. Note that the maps $\pi_s\ot\rho_t\,:\, A\odot B\rightarrow A_s\odot B_t$ are surjective and $A_s\odot B_t$ is dense in $A_s\ot B_t$. Thus, for any $(s,t,p,n)\in I$, there exists an element $y_{s,t,p,n}\in A\odot B$ such that $p (x_{s,t}-(\pi_s\ot\rho_t)(y_{s,t,p,n}))<1/n$. We claim that the net $(y_{s,t,p,n})_{(s,t,p,n)\in I}$ is a Cauchy net in $A\ot B$. Indeed, by Remark \ref{Rmkreps}, for any pair of continuous unital $*$-homomorphisms $\pi\,:\, A\rightarrow\Lcal(H)$ and $\rho\,:\,B\rightarrow\Lcal(K)$ there exists $s_0\in S$, $t_0\in T$ and continuous unital $*$-homomorphisms $\widetilde{\pi}\,:\,A_{s_0}\rightarrow\Lcal(H)$, $\widetilde{\rho}\,:\,B_{t_0}\rightarrow \Lcal(K)$ such that $\pi=\widetilde{\pi}\circ\pi_{s_0}$ and $\rho=\widetilde{\rho}\circ\rho_{t_0}$. Fix $\epsilon>0$. Then, for all $(s,t,p, n),(\alpha,\beta,q,m)\geq(s_0,t_0,p_0,n_0)$, where $p\in S(A_{s_0}\ot B_{t_0})$ is given by the continuous homomorphism $\widetilde{\pi}\ot \widetilde{\rho}$ and $n_0>2/\epsilon$, one has:
\begin{eqnarray*}
&&\Vert(\pi\ot\rho)(y_{s,t,p,n}-y_{\alpha,\beta,q,m})\Vert=p_0[(\pi_{s_0}\ot\rho_{t_0})(y_{s,t,p,n}-y_{\alpha,\beta,q,m})]\\
&=&p_0[(\pi_{s_0,s}\ot\rho_{t_0,t})(\pi_s\ot\rho_t)(y_{s,t,p,n})-(\pi_{s_0,\alpha}\ot\rho_{t_0,\beta})(\pi_\alpha\ot\rho_\beta)(y_{\alpha,\beta,q,m})]\\
&\leq&p_0[(\pi_{s_0,s}\ot\rho_{t_0,t})(\pi_s\ot\rho_t)(y_{s,t,p,n})-(\pi_{s_0,s}\ot\rho_{t_0,t})(x_{s,t})]\\
&&+p_0[(\pi_{s_0,s}\ot\rho_{t_0,t})(x_{s,t})-(\pi_{s_0,\alpha}\ot\pi_{t_0,\beta})(x_{\alpha,\beta})]\\
&&+p_0[(\pi_{s_0,\alpha}\ot\rho_{t_0,\beta})(x_{\alpha,\beta})-(\pi_{s_0,\alpha}\ot\rho_{t_0,\beta})(\pi_\alpha\ot\rho_\beta)(y_{\alpha,\beta,q,m})]\\
&=&p_0[(\pi_{s_0,s}\ot\rho_{t_0,t})(y_{s,t,p,n}-x_{s,t})]+p_0[(\pi_{s_0,\alpha}\ot\rho_{t_0,\beta})(y_{\alpha,\beta,q,m}-x_{\alpha,\beta})]\\&<&\epsilon.
\end{eqnarray*}
Let $y:=\lim y_{s,t,p,n}\in A\ot B$. Then, for all $(s,t)\in S\times T$, for any $p\in S(A_s\ot B_t)$ and all $\epsilon>0$, there exists $(s',t',p',n')\in I$ such that $(s',t',p',n')\geq (s,t,p,n)$, with $n>2/\epsilon$ and $p[(\pi_s\ot\rho_t)(y-y_{s',t',p',n})]\leq\epsilon/2$. Hence,
\begin{eqnarray*}
p[\psi_{s,t}\psi(y)-\psi_{s,t}(x)]&=&p[(\pi_s\ot\rho_t)(y)-x_{s,t}]\\
&\leq& p[(\pi_s\ot\rho_t)(y)-(\pi_s\ot\rho_t)(y_{s',t',p',n'})]+p[(\pi_s\ot\rho_t)(y_{s',t',p',n'})-x_{s,t}]\\
&\leq&\epsilon/2+p[(\pi_{s,s'}\ot\rho_{t,t'})(\pi_{s'}\ot\rho_{t'})(y_{s',t',p',n'})-(\pi_{s,s'}\ot\rho_{t,t'})(x_{s',t'})]\\
&\leq&\epsilon/2+p'[(\pi_{s'}\ot\rho_{t'})(y_{s',t',p',n'})-x_{s',t'}\Vert<\epsilon.
\end{eqnarray*}
Since this is true for all $\epsilon>0$, all $(s,t)$ and any $p\in S(A_s\ot B_t)$, we deduce that $\psi(y)=x$. Hence, we conclude that $\psi$ is surjective.

\vspace{0.2cm}

\noindent Next, we show that $\psi$ is injective. Assume that $x\in A\ot B$ is non-zero. Then, there exists continuous unital $*$-homomorphisms $\pi\,:\, A\rightarrow\Lcal(H)$ and $\rho\,:\, B\rightarrow\Lcal(K)$ such that $(\pi\ot\rho)(x)\neq 0$. It follows from Remark \ref{Rmkreps} that there exists $(s,t)\in S\times T$ such that $(\pi_s\ot\rho_t)(x)\neq 0$. Hence, $\psi(x)\neq 0$. 

\vspace{0.2cm}

\noindent Finally, it remains to show that $\psi$ is open. Let $O\subset A\otimes B$ be a non-empty open set. Fix $y_0\in\psi(O)$ and write $y_0=\psi(x_0)$ with $x_0\in O$. Then, there exists continuous unital $*$-homomorphisms $\pi_i\,:\, A\rightarrow\Lcal(H_i)$ and $\rho_i\,:\,B\rightarrow\Lcal(K_i)$, $1\leq i\leq n$ and $\epsilon>0$ such that:
$$\{x\in A\ot B\,:\,\Vert(\pi_i\ot\rho_i)(x-x_0)\Vert\leq\epsilon\text{ for all }1\leq i\leq n\}\subset O.$$
By Remark \ref{Rmkreps}, for all $1\leq i\leq n$, there exists $s_i\in S$, $t_i\in T$, a finite set $F_i=\{p_1^i,p_2^i,...,p_{k_i}^i\}\subset S(A_{s_i}\ot B_{t_i})$ and $\delta_i>0$ such that
$$V:=\{x\in A\ot B\,:\,p_{t}^i[(\pi_{s_i}\ot\rho_{t_i})(x-x_0)]<\delta_i\text{ for all }1\leq i\leq n, p_t^i\in F_i\}$$
satisfies $x\in V\subset O$. Define the open set $W\subset\underset{\leftarrow}{\lim}A_s\ot A_t$ by:
$$W:=\{y=(y_{s,t})_{s,t}\,:\,p_t^i [y_{s_i,t_i}-(y_0)_{s_i,t_i}]<\delta_i\text{ for all }1\leq i\leq n, p_t^i\in F_i\}.$$
By definition of $\psi$ we have $\psi(V)\subseteq W$ and by surjectivity of $\psi$ we see that $W\subseteq \psi(V)$. Hence, $\psi(V)=W$ is open, $y\in\psi(V)\subset O$. Thus, $\psi$ is a continuous, bijective, open map, giving an isomorphism of unital locally C*-algebras between $A\ot B$ and $\underset{\leftarrow}{\lim} A_s\ot A_t$.
\end{proof}

\subsection{Quantum homomorphisms}

\begin{definition}
For $A$ and $B$ be two unital locally C*-algebras we define $\Qhom(A,B)$ as the set of continuous unital $*$-homomorphisms $\rho\,:\,A\rightarrow B\ot C$, where $C=C^*(\mathbb{F}_\infty)/I$, for some closed two sided ideal $I$ of $C^*(\mathbb{F}_\infty)$. For any such $\rho\in \Qhom(A,B)$, we define $A_\rho$ as the unital C*-subalgebra of $C$ generated by $\{(\omega\ot\id)\rho(a)\,:\,a\in A,\,\omega\in B^*\}$. It is easily checked then that $\rho$ maps $A$ into the locally C*-subalgebra $B\otimes A_\rho$ of $B\ot C$.
\begin{itemize}
\item We write $\pi\leq\rho$ if there exists a unital $*$-homomorphism $\psi\,:\,A_\rho\rightarrow A_\pi$ such that
$$(\id\ot\psi)\rho=\pi$$
It is readily checked that the map $\psi$ is surjective in this case.
\item We say that $\pi$ and $\rho$ are \textit{equivalent} and we write $\pi\sim\rho$ if there exists a $*$-isomorphism $\psi\,:\,A_\rho\rightarrow A_\pi$ such that $(\id\ot\psi)\rho=\pi$. It is immediate that $\sim$ defines an equivalence relation on the set $\Qhom(A,B)$. 
\end{itemize}
\end{definition}

\begin{remark}
Let $\pi,\rho\in\Qhom(A,B)$ and suppose that $\pi\leq\rho$. Then, there is a unique unital $*$-homomorphism $\psi\,:\,A_\rho\rightarrow A_\pi$ such that $(\id\ot\psi)\rho=\pi$. We will denote it by $\psi_{\pi,\rho}$. Indeed, if $\psi,\varphi\,:\,A_\rho\rightarrow A_\pi$ are morphisms such that  $(\id\ot\psi)\rho=\pi=(\id\ot\varphi)\rho$. Then, for all $a\in A$ and $\omega\in B^*$ one has $\psi((\omega\ot\id)\rho(a))=\varphi((\omega\ot\id)\rho(a))$. Hence,
$$X:=\{(\omega\ot\id)\rho(a)\,:\,a\in A,\,\,\omega\in  B^*\}\subset E:=\{x\in A_\rho\,:\,\psi(x)=\varphi(x)\}.$$
Since $\psi$ and $\varphi$ are unital $*$-homomorphisms, $E$ is a closed unital $*$-subalgebra of $A_\rho$ and since $A_\rho$ is the C*-algebra generated by $X$, it follows that $E=A_\rho$ so $\psi=\varphi$. Note a similar remark also holds for the equivalence of quantum homomorphisms.
\end{remark}

\begin{proposition}\label{PropOrder}
The following holds.
\begin{enumerate}
\item If $\pi\sim\pi'$ and $\rho\sim\rho'$ then $\pi\leq\rho$ if and only if $\pi'\leq\rho'$.
\item The relation $\leq$ on $\Qhom(A,B)/\sim$ is an order relation which makes $$\qhom(A,B):=\Qhom(A,B)/\sim$$ a directed set.
\end{enumerate}
\end{proposition}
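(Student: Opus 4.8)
The plan is to derive both assertions almost entirely from the defining intertwining relations of $\leq$ and $\sim$, the only genuinely constructive point being directedness. For (1) I would show $\leq$ is compatible with $\sim$ by composing intertwiners. Assume $\pi\sim\pi'$, $\rho\sim\rho'$ and $\pi\leq\rho$. By definition there are $*$-isomorphisms $\alpha\,:\,A_{\pi'}\rightarrow A_\pi$ with $(\id\ot\alpha)\pi'=\pi$ and $\beta\,:\,A_{\rho'}\rightarrow A_\rho$ with $(\id\ot\beta)\rho'=\rho$, together with a unital $*$-homomorphism $\psi\,:\,A_\rho\rightarrow A_\pi$ with $(\id\ot\psi)\rho=\pi$. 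Setting $\phi:=\alpha^{-1}\circ\psi\circ\beta\,:\,A_{\rho'}\rightarrow A_{\pi'}$ and repeatedly using $(\id\ot(\gamma\circ\delta))=(\id\ot\gamma)(\id\ot\delta)$, I would compute $(\id\ot\phi)\rho'=(\id\ot\alpha^{-1})(\id\ot\psi)(\id\ot\beta)\rho'=(\id\ot\alpha^{-1})(\id\ot\psi)\rho=(\id\ot\alpha^{-1})\pi=\pi'$, so $\pi'\leq\rho'$. The reverse implication is the symmetric statement, which settles (1).

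For (2), reflexivity is witnessed by $\id_{A_\pi}$, and transitivity by composition: if $(\id\ot\psi_1)\rho=\pi$ and $(\id\ot\psi_2)\sigma=\rho$ then $(\id\ot(\psi_1\psi_2))\sigma=\pi$. For antisymmetry I would invoke the uniqueness statement of the preceding Remark: if $\pi\leq\rho$ and $\rho\leq\pi$ with witnessing maps $\psi_{\pi,\rho}$ and $\psi_{\rho,\pi}$, then both $\psi_{\pi,\rho}\circ\psi_{\rho,\pi}$ and $\id_{A_\pi}$ intertwine $\pi$ with itself, hence coincide by uniqueness; similarly $\psi_{\rho,\pi}\circ\psi_{\pi,\rho}=\id_{A_\rho}$. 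Thus $\psi_{\pi,\rho}$ is a $*$-isomorphism intertwining $\rho$ and $\pi$, i.e. $\pi\sim\rho$, so the two classes agree in $\qhom(A,B)$. Together with (1) this shows $\leq$ descends to a well-defined order relation on $\qhom(A,B)$.

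The real content, and the step I expect to be the only obstacle, is directedness. Given $\pi\,:\,A\rightarrow B\ot A_\pi$ and $\rho\,:\,A\rightarrow B\ot A_\rho$ in $\Qhom(A,B)$, I would produce a common upper bound by a direct-sum construction. As $A_\pi$ and $A_\rho$ are separable unital C*-algebras, so is $A_\pi\oplus A_\rho$, hence it is isomorphic to some $C^*(\F_\infty)/I'$ and is admissible. Using the identification $B\ot(A_\pi\oplus A_\rho)\cong(B\ot A_\pi)\oplus(B\ot A_\rho)$ of locally C*-algebras, I define $\sigma\,:\,A\rightarrow B\ot(A_\pi\oplus A_\rho)$ by $\sigma(a):=(\pi(a),\rho(a))$; this is a continuous unital $*$-homomorphism, so $\sigma\in\Qhom(A,B)$. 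Writing $p_\pi,p_\rho$ for the coordinate projections of $A_\pi\oplus A_\rho$, the restrictions $\psi:=p_\pi|_{A_\sigma}$ and $\psi':=p_\rho|_{A_\sigma}$ satisfy $(\id\ot\psi)\sigma=\pi$ and $(\id\ot\psi')\sigma=\rho$, since $(\id\ot p_\pi)\sigma(a)=\pi(a)$ by construction; hence $\pi\leq\sigma$ and $\rho\leq\sigma$. Finally $\Qhom(A,B)\neq\emptyset$, as $a\mapsto 1_B\ot a$ (valued in $B\ot A$) belongs to it, so $\qhom(A,B)$ is directed.

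The point requiring care in this last step is twofold: justifying that the minimal tensor product of the locally C*-algebra $B$ with a finite direct sum of C*-algebras splits as the corresponding direct sum (which follows from the Arens--Michael description of $B$, distributivity of the minimal tensor product of C*-algebras over finite direct sums, and the commutation of projective limits with finite direct sums, cf. Proposition \ref{PropAMTensor}), and checking that $\psi=p_\pi|_{A_\sigma}$ genuinely maps $A_\sigma$ onto $A_\pi$. The latter holds because $p_\pi$ sends the generators $(\omega\ot\id)\sigma(a)=\bigl((\omega\ot\id)\pi(a),(\omega\ot\id)\rho(a)\bigr)$ of $A_\sigma$ onto the generators $(\omega\ot\id)\pi(a)$ of $A_\pi$, so its image is a C*-subalgebra of $A_\pi$ containing a generating set, hence all of $A_\pi$.
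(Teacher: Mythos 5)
Your proof is correct and follows essentially the same route as the paper's: part (1) by conjugating the intertwiner with the two isomorphisms, reflexivity and transitivity by identity and composition, antisymmetry via uniqueness of intertwiners (the paper re-runs the coefficient-generation argument inline, which is exactly what proves that uniqueness Remark), and directedness via the direct sum $A_\pi\oplus A_\rho$ re-presented as a quotient of $C^*(\F_\infty)$. You are in fact more careful than the paper on one point: the paper asserts $A_{\rho_1\oplus\rho_2}\simeq A_{\rho_1}\oplus A_{\rho_2}$, which can fail (if $\rho_1=\rho_2$ the coefficients generate only the diagonal), whereas your argument --- restrict the coordinate projections to $A_\sigma$ and observe that they carry a generating set of $A_\sigma$ onto a generating set of $A_\pi$, resp.\ $A_\rho$ --- is what actually delivers $\pi\leq\sigma$ and $\rho\leq\sigma$.

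The one incorrect claim is the closing non-emptiness remark: the map $a\mapsto 1_B\ot a$ valued in $B\ot A$ is not an element of $\Qhom(A,B)$ in general, since the definition requires the right-hand tensor factor to be of the form $C^*(\F_\infty)/I$, i.e.\ a separable unital C*-algebra, while $A$ here is an arbitrary unital locally C*-algebra. When $A$ is locally separable one can repair this by using $a\mapsto 1_B\ot\pi_p(a)$ for any $p\in S(A)$, but for completely general $A$ the set $\Qhom(A,B)$ can actually be empty. This does not damage the proposition as the paper proves it: neither the statement's use of ``directed'' nor the paper's proof involves non-emptiness, only the existence of upper bounds for pairs, and that part of your argument is sound.
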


\begin{proof}

\vspace{0.2cm}

\noindent$(1)$ Note that, by exchanging the roles of $\pi$ and $\pi'$ and $\rho$ and $\rho'$, it suffices to show one implication. Let $\psi\,:\,A_\pi\rightarrow A_{\pi'},\varphi\,:\,A_\rho\rightarrow A_{\rho'}$ be isomorphisms such that $(\id\ot\psi)\pi=\pi'$ and $(\id\ot\varphi)\rho=\rho'$. Assume that $\pi\leq\rho$ and let $\nu\,:\,A_\rho\rightarrow A_\pi$ a unital $*$-isomorphism such that $(\id\ot\nu)\rho=\pi$. Define $\nu':=\psi\circ\nu\circ\varphi^{-1}\,:\, A_{\rho'}\rightarrow A_{\pi'}$ and we have $(\id\ot\nu')\rho'=(\id\ot\psi)(\id\ot\nu)\rho=(\id\ot\psi)\pi=\pi'$, finishing the proof.
\vspace{0.2cm}

\noindent$(2).$ $\leq$ is clearly reflexive on $\Qhom(A,B)$ (take $\psi=\id\,:\,A_\pi\rightarrow A_\pi$). Suppose that $\pi\leq\rho$ and $\rho\leq\varphi$ with morphisms $\psi_1\,:\, A_\rho\rightarrow A_\pi$ and $\psi_2\,:\, A_\varphi\rightarrow A_\rho$ such that $(\id\ot\psi_1)\rho=\pi$ and $(\id\ot\psi_2)\varphi=\rho$. Then, $\psi:=\psi_1\circ\psi_2\,:\, A_\varphi\rightarrow A_\pi$ and $(\id\ot\psi)\varphi=(\id\ot\psi_1)(\id\ot\psi_2)\varphi=(\id\ot\psi_1)\rho=\pi$ so that the relation is transitive on $\Qhom(A,B)$. By $(1)$, the relation $\leq$ is well defined on the quotient $\Qhom(A,B)/\sim$ and it is reflexive and transitive on it by the previous part. Suppose now that $\pi\leq\rho$ and $\rho\leq\pi$ so that there exists $\psi\,:\,A_\rho\rightarrow A_\pi$ and $\varphi\,:\, A_\pi\rightarrow A_\rho$ so that $(\id\ot\psi)\rho=\pi$ and $(\id\ot\varphi)\pi=\rho$. Hence, $(\id\ot\psi\varphi)\pi=\pi$ and $(\id\ot\varphi\psi)\rho=\rho$. So, for all $a\in A$ and $\omega\in B^*$ one has:
$$\varphi\psi((\omega\ot\id)(\pi(a)))=(\omega\ot\id)(\pi(a))\quad\text{and}\quad\psi\varphi((\omega\ot\id)(\rho(a)))=(\omega\ot\id)(\rho(a)).$$
Since $A_\pi$ is generated by coefficients of $\pi$ and $A_\rho$ by coefficients of $\rho$ it follows that $\varphi\psi=\id_{A_\pi}$ and $\psi\varphi=\id_{A_\rho}$ so $\pi\sim\rho$ and $\leq$ is antisymmetric on the quotient.

\vspace{0.2cm}

\noindent Let us show that $\qhom(A,B)$ is directed. Let $\rho_1,\rho_2\in\Qhom(A,B)$. Since $A_{\rho_1}\oplus A_{\rho_2}$ is separable, there exists an ideal $I$ in $C^*(\mathbb{F}_\infty)$ and an isomorphism $\psi\,:\,A_{\rho_1}\oplus A_{\rho_2}\rightarrow C^*(\mathbb{F}_\infty)/I$. Consider the unital $*$-homomorphism $\rho_1\oplus\rho_2\,:\,A\rightarrow B\ot (A_{\rho_1}\oplus A_{\rho_2})$ and define:
$$
\rho:=(\id\ot\psi)\circ(\rho_1\oplus\rho_2)\,:\,A\rightarrow B\ot C^*(\mathbb{F}_\infty)/I,$$
Note that $A_\rho\simeq A_{\rho_1}\oplus A_{\rho_2}$ so that we have unital $*$-homomorphism $\psi_k\,:\,A_\rho\rightarrow A_{\rho_k}$ such that $(\id\ot\psi_k)\rho=\rho_k$ for $k=1,2$ i.e. $\rho\geq\rho_1$ and $\rho\geq\rho_2$.
\end{proof}

\subsection{The universal quantum homomorphism}

A unital locally C*-algebra $A$ is called \textit{locally separable} if for all $p\in S(A)$ there exists a countable subset $D\subset A$ such that any $p$-ball in $A$ contains an element of $D$. It is easy to see that it is equivalent to $A_p$ being a separable C*-algebra, for any $p\in S(A)$. Note also that any separable locally C*-algebra is locally separable and a C*-algebra is locally separable as a locally C*-algebra if and only if it is separable. In particular, any projective limit of separable C*-algebras is a locally separable locally C*-algebra.

\begin{theorem}\label{ThmUniversalPro}
Let $A$ and $B$ be any  unital locally C*-algebras. There exists a unique (up to a canonical isomorphism) locally separable unital locally C*-algebra $\U(A,B)$ with a continuous unital $*$-homomorphism $\psi\,:\, A\rightarrow B\ot \U(A,B)$ such that:
\begin{itemize}
    \item If $\U$ is a locally separable  unital locally C*-algebra and $\pi\,:\,A\rightarrow B\ot\U$ is a unital continuous $*$-homomorphism there exists a unique continuous unital $*$-homomorphism $\widetilde{\pi}\,:\,\U(A,B)\rightarrow\U$ such that $(\id\ot\widetilde{\pi})\psi=\pi$.
\end{itemize}
\end{theorem}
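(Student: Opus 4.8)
The plan is to realize $\U(A,B)$ as the projective limit of the separable C*-algebras $A_\rho$ indexed by the directed poset $\qhom(A,B)$ of Proposition \ref{PropOrder}. For $\pi\le\rho$ the surjection $\psi_{\pi,\rho}\,:\,A_\rho\to A_\pi$ is the unique one with $(\id\ot\psi_{\pi,\rho})\rho=\pi$, and uniqueness forces $\psi_{\pi,\sigma}=\psi_{\pi,\rho}\circ\psi_{\rho,\sigma}$ for $\pi\le\rho\le\sigma$, so $(A_\rho,\psi_{\pi,\rho})$ is a genuine projective system of separable C*-algebras. I set $\U(A,B):=\varprojlim_\rho A_\rho$ with projective maps $\pi_\rho\,:\,\U(A,B)\to A_\rho$. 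That $\U(A,B)$ is locally separable follows from a short argument using the Arens--Michael picture (Proposition \ref{PropAM}): every $p\in S(\U(A,B))$ is dominated by some $p_\sigma:=\Vert\pi_\sigma(\cdot)\Vert$, so $\U(A,B)_p$ is a quotient of the separable algebra $A_\sigma$.

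The crucial technical point — and the one I expect to be the main obstacle — is that each projective map $\pi_\rho$ is \emph{surjective}, since this is exactly the hypothesis needed to feed the system into Proposition \ref{PropAMTensor}. I would first note that for fixed $a\in A$ and $\omega\in B^*$ the coefficients $(\omega\ot\id)\rho(a)\in A_\rho$ form a \emph{compatible} family: applying $\psi_{\pi,\rho}$ and using $(\id\ot\psi_{\pi,\rho})\rho=\pi$ together with Proposition \ref{CorTensor} gives $\psi_{\pi,\rho}\big((\omega\ot\id)\rho(a)\big)=(\omega\ot\id)\pi(a)$. Hence this family defines an element of $\U(A,B)$ whose $\pi_\rho$-image is precisely $(\omega\ot\id)\rho(a)$. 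Thus $\pi_\rho(\U(A,B))$ is a $*$-subalgebra of $A_\rho$ containing all the generating coefficients, so it is dense; and by Corollary \ref{CorClosed} it is also closed. Therefore $\pi_\rho(\U(A,B))=A_\rho$.

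With surjectivity established, I write $B=\varprojlim_q B_q$ (Proposition \ref{PropAM}) and invoke Proposition \ref{PropAMTensor} to identify $B\ot\U(A,B)\cong\varprojlim_{(q,\rho)}(B_q\ot A_\rho)$. The maps $(\rho_q\ot\id)\circ\rho\,:\,A\to B_q\ot A_\rho$ form a compatible family over $(q,\rho)$ (again via $(\id\ot\psi_{\pi,\rho})\rho=\pi$), so by the universal property of projective limits (Remark \ref{RmkProj}) they assemble into a unique continuous unital $*$-homomorphism $\psi\,:\,A\to B\ot\U(A,B)$ characterised by $(\id\ot\pi_\rho)\psi=\rho$ for all $\rho$. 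Lemma \ref{LemmaDensity} then shows that $\U(A,B)$ is topologically generated by the coefficients $(\omega\ot\id)\psi(a)$, since $\pi_\rho$ carries them onto the generators of $A_\rho$.

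It remains to verify the universal property and uniqueness. Given a locally separable $\U$ and a continuous $\pi\,:\,A\to B\ot\U$, I use the decomposition $\U=\varprojlim_p\U_p$ into separable C*-algebras; each $\U_p$ is a quotient of $C^*(\F_\infty)$, so $\pi_p:=(\id\ot\sigma_p)\pi$ lies in $\Qhom(A,B)$ and its class belongs to the index set, yielding $\theta_p:=\iota_p\circ\pi_{[\pi_p]}\,:\,\U(A,B)\to A_{\pi_p}\hookrightarrow\U_p$. These are compatible in $p$ (because $p\le p'$ forces $\pi_p\le\pi_{p'}$), hence induce a unique continuous $\widetilde{\pi}\,:\,\U(A,B)\to\U$ with $\sigma_p\widetilde{\pi}=\theta_p$; testing through each $\sigma_p$ gives $(\id\ot\widetilde{\pi})\psi=\pi$, and uniqueness of $\widetilde{\pi}$ follows from the generation of $\U(A,B)$ by coefficients. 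Finally, uniqueness of the pair $(\U(A,B),\psi)$ up to canonical isomorphism is the standard two-sided universal-property argument, applicable since both candidate objects are locally separable. Apart from the surjectivity step above, the remaining effort is the routine bookkeeping with the double projective limit in Proposition \ref{PropAMTensor}.
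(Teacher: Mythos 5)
Your proposal is correct and follows essentially the same route as the paper: the projective limit over (a complete set of representatives of) the directed set $\qhom(A,B)$, surjectivity of the projective maps via compatibility of coefficients (Proposition \ref{CorTensor}), closedness of images (Corollary \ref{CorClosed}) and density, then Proposition \ref{PropAMTensor} to realize $B\ot\U(A,B)$ as a projective limit so that $\psi$ can be assembled, and finally the universal property via the Arens--Michael decomposition of $\U$. The only cosmetic differences are that you decompose $B$ itself by Arens--Michael before invoking Proposition \ref{PropAMTensor} (the paper keeps $B$ whole, using the trivial projective system), and you treat the locally separable case directly instead of first proving the separable C*-algebra case, neither of which changes the substance of the argument.
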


\begin{proof}
If follows from Proposition \ref{PropOrder} that, given any complete set of representatives
$$S\subset\Qhom(A,B)$$
for the equivalence relation $\sim$, the relation $\leq$ restricted to $S$ is an order relation. Hence, we may consider the projective system of unital C*-algebras over $S$ given by $A_\pi$, $\pi\in S$ and, for $\pi\leq\rho$, $\psi_{\pi,\rho}\,:\, A_\rho\rightarrow A_\pi$ the unique unital $*$-homomorphism such that $(\id\ot\psi_{\pi,\rho})\rho=\pi$. Define $\U(A,B)$ as the projective limit so that it comes equipped with unital $*$-homomorphisms $\psi_{\pi}\,:\,\U(A,B)\rightarrow A_\pi$. Let $E_\pi\subset A_\pi$ be the unital $*$-algebra generated by $X_\pi:=\{(\omega\ot\id)\pi(a)\,:\,a\in A,\,\,\omega\in B^*\}$. Note that, since all the $A_\pi$ are separable, the locally C*-algebra $\Ucal(A,B)$ is locally separable. Let $\omega\in B^*$ and $a\in A$ and define $x:=((\omega\ot\id)(\rho(a))_{\rho\in S}$. By Proposition \ref{CorTensor} we see that $x\in\Ucal(A,B)$ and by definition $\psi_\pi(x)=(\omega\ot\id)(\pi(a))$. It follows that $X_\pi\subset\psi_\pi(\Ucal(A,B))$ and since $\psi_\pi(\Ucal(A,B))$ is a unital $*$-algebra we find $E_\pi\subset\psi_\pi(\Ucal(A,B))$. We can now deduce from density of $E_\pi$ in $A_\pi$ and Corollary \ref{CorClosed} that $\psi_\pi$ is surjective. By Proposition \ref{PropAMTensor} we may write $B\ot\U(A,B)=\underset{\leftarrow}{\lim}\,B\ot A_\pi$, where the projective system is with the maps $\id\ot\psi_{\pi,\rho}\,:\,B\ot A_\rho\rightarrow B\ot A_\pi$ and with canonical projections given by $\id\ot\psi_\pi\,:\,B\ot\U(A,B)\rightarrow B\ot A_\pi$. Since we have maps $\pi\,:\, A\rightarrow B\ot A_\pi$, for each $\pi\in S$ satisfying $(\id\ot\psi_{\pi,\rho})\rho=\pi$ for all $\pi\leq\rho$, the universal property of the projective limit $B\ot\U(A,B)=\underset{\leftarrow}{\lim}\, B\ot A_\pi$ gives a unital $*$-homomorphism $\psi\,:\,A\rightarrow B\ot\U(A,B)$ such that $(\id\ot\psi_{\pi})\psi=\pi$, for all $\pi\in S$.

\vspace{0.2cm}

\noindent Define $E_\infty\subset \U(A,B)$ to be the unital $*$-algebra generated by $X_\infty:=\{(\omega\ot\id)(\psi(a))\,:\,a\in A,\,\,\omega\in B^*\}$. By Proposition \ref{CorTensor} we have $\psi_{\pi}((\omega\ot\id)\psi(a))=(\omega\ot\id)(\id\ot\psi_{\pi})\psi(a)=(\omega\ot\id)\pi(a)$.
It follows that $X_\pi\subset\psi_{\pi}(E_\infty)$ for all $\pi\in S$. Since $\psi_{\pi}(E_\infty)$ is a unital $*$-algebra, we have $E_\pi\subset\psi_{\pi}(E_\infty)$ for all $\pi\in S$ so $\psi_{\pi}(E_\infty)$ is dense in $A_\pi$ for all $\pi\in S$. We deduce that $E_\infty$ is dense by Lemma \ref{LemmaDensity}.

\vspace{0.2cm}

\noindent We first show the universal property for unital separable C*-algebras for the unital locally C*-algebra $\U(A,B)$ constructed above. Let $\U$ be a unital separable C*-algebras and suppose that $\pi\,:\, A\rightarrow B\ot\U$ is a unital $*$-homomorphism. We may and will assume that $\U=C^*(\mathbb{F}_\infty)/I$. Since $S$ is a complete set of representatives, there exists a unique $\rho\in S$ such that $\rho\sim\pi$. Let $\varphi\,:\,A_\rho\rightarrow A_\pi$ be the unique isomorphism such that $(\id\ot\varphi)\rho=\pi$ and define the continuous unital $*$-homomorphism $\widetilde{\pi}:=\varphi\circ\psi_{\rho}\,:\,\U(A,B)\rightarrow A_\pi$. Then, $(\id\ot\widetilde{\pi})\psi=(\id\ot\varphi)(\id\ot\psi_{\rho})\psi=(\id\ot\varphi)\rho=\pi$. Note that the uniqueness of $\widetilde{\pi}$ is clear, by Proposition \ref{CorTensor}, continuity of $\widetilde{\pi}$ and density of $E_\infty$. This shows the universal property for C*-algebra. 

\vspace{0.2cm}

\noindent We show now that $\Ucal(A,B)$ satisfies the universal property for locally separable unital locally C*-algebras. Let $\U$ be any locally separable unital locally C*-algebra and write $\U=\underset{\leftarrow}{\lim}_{p\in S(\Ucal)}\,\U_p$ its Arens-Michael decomposition with unital $*$-homomorphism $\pi_{p,q}\,:\,\U_q\rightarrow \U_p$ for $p\leq q$ and canonical map $\pi_{p}\,:\,\U\rightarrow\U_p$ and where all the unital C*-algebra $\Ucal_p$ are separable. From the continuous morphism $\pi\,:\, A\rightarrow B\ot\U$ we get continuous morphisms $(\id\ot\pi_{p})\pi\,:\, A\rightarrow B\ot\U_p$ for all $p\in S(\Ucal)$ and we can use the C*-algebra version of the universal property to get continuous morphisms $\nu_p\,:\,\U(A,B)\rightarrow \U_p$ such that $(\id\ot\nu_p)\psi=(\id\ot\pi_{p})\pi$ for all $p\in S(\Ucal)$. Note that, for all $p\leq q$, $\omega\in B^*$ and $a\in A$,
\begin{eqnarray*}
\pi_{p,q}\nu_q((\omega\ot\id)\psi(a))&=&\pi_{p,q}(\omega\ot\id)(\id\ot\nu_q)\psi(a)=\pi_{p,q}\pi_{q}((\omega\ot\id)\pi(a))\\
&=&\pi_{p}((\omega\ot\id)\pi(a))=\nu_p((\omega\ot\id)\psi(a))
\end{eqnarray*}
By density of $E_\infty$ and continuity of the morphisms involved, we deduce that $\pi_{p,q}\nu_q=\nu_p$. Hence, the universal property of the projective limit $\U$ produces a continuous unital $*$-homomophism $\widetilde{\pi}\,:\,\U(A,B)\rightarrow\U$ such that $\pi_{p}\circ\widetilde{\pi}=\nu_p$ hence, $(\id\ot\pi_{p})(\id\ot\widetilde{\pi})\psi=(\id\ot\nu_p)\psi=(\id\ot\pi_{p})\pi$ for all $p\in S(\Ucal)$. Since the maps $\id\ot\pi_{p}$ ($p\in S(\Ucal)$) separates the points on $B\ot\U$ we deduce that $(\id\ot\widetilde{\pi})\psi=\pi$. This proves the universal property for locally separable unital locally C*-algebras and the uniqueness of $\U(A,B)$ now easily follows from this universal property.\end{proof}

\noindent For $A$, $B$ unital locally C*-algebras, we denote by $\rho_{A,B}\,:\,A\rightarrow B\ot\Ucal(A,B)$ the universal quantum homomorphism from $A$ to $B$. Let $\mathcal{C}$ be the category whose objects are locally separable unital locally C*-algebras and morphisms are continuous unital $*$-homomorphisms. Given $B\in{\rm Obj}(\Ccal)$, we consider the functor $F_B\,:\,\Ccal\rightarrow\Ccal$ defined by $F_B(C)=B\ot C$ and $F_B(\varphi)=\id_B\ot \varphi$ for all $C\in{\rm Obj}(\Ccal)$ and all $\varphi\in{\rm Mor}_\Ccal(C,C')$. The following result is a direct consequence of Theorem \ref{ThmUniversalPro}.

\begin{theorem}\label{ThmAdjoint}
The functor $G_B\,:\,\Ccal\rightarrow\Ccal$ defined by $G_B(A)=\Ucal(A,B)$ and, for a morphism $\varphi\in{\rm Mor}_\Ccal(A,A')$, $G_B(\varphi)\,:\,\Ucal(A,B)\rightarrow\Ucal(A',B)$ is the unique continuous unital $*$-homomorphism such that $(\id\ot G_B(\varphi))\rho_{A,B}=\rho_{A',B}\circ\varphi$ is the left adjoint of the functor $F_B$.
\end{theorem}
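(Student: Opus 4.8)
The plan is to recognize that Theorem~\ref{ThmUniversalPro} is precisely the assertion that, for each object $A\in\mathrm{Obj}(\Ccal)$, the morphism $\rho_{A,B}\,:\,A\rightarrow B\ot\Ucal(A,B)=F_B(G_B(A))$ is a \emph{universal arrow} from $A$ to the functor $F_B$. Indeed, the universal property states that for any $C\in\mathrm{Obj}(\Ccal)$ and any morphism $\pi\,:\,A\rightarrow F_B(C)=B\ot C$ there is a unique morphism $\widetilde{\pi}\,:\,G_B(A)\rightarrow C$ with $F_B(\widetilde{\pi})\circ\rho_{A,B}=(\id\ot\widetilde{\pi})\rho_{A,B}=\pi$, which is exactly the defining condition of a universal arrow. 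By the standard characterization of adjoint functors through universal arrows, this alone produces a left adjoint of $F_B$ whose object part is $A\mapsto\Ucal(A,B)$ and whose unit is $\eta_A:=\rho_{A,B}$; it then remains only to identify the action on morphisms with the one in the statement and to record naturality of the resulting hom-set bijection.

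First I would check that $G_B$ is a functor. For $\varphi\in\mathrm{Mor}_\Ccal(A,A')$ the map $\rho_{A',B}\circ\varphi\,:\,A\rightarrow B\ot\Ucal(A',B)$ is a continuous unital $*$-homomorphism into $F_B$ applied to a locally separable object, so the universal property of $\rho_{A,B}$ supplies a unique $G_B(\varphi)$ with $(\id\ot G_B(\varphi))\rho_{A,B}=\rho_{A',B}\circ\varphi$, exactly as stated. Functoriality, i.e.\ $G_B(\id_A)=\id_{\Ucal(A,B)}$ and $G_B(\psi\circ\varphi)=G_B(\psi)\circ G_B(\varphi)$, then follows from the uniqueness clause: in each case both candidates satisfy the same defining identity after composition with $\rho_{A,B}$, using $(\id\ot G_B(\psi))(\id\ot G_B(\varphi))=\id\ot(G_B(\psi)\circ G_B(\varphi))$ (Proposition~\ref{PropMinTens}).

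Next I would exhibit the adjunction bijection and verify naturality. For fixed $A,C$ set
$$\Phi_{A,C}\,:\,\mathrm{Mor}_\Ccal(\Ucal(A,B),C)\rightarrow\mathrm{Mor}_\Ccal(A,B\ot C),\quad\psi\mapsto(\id\ot\psi)\rho_{A,B}.$$
Surjectivity of $\Phi_{A,C}$ is the existence part of Theorem~\ref{ThmUniversalPro} and injectivity is its uniqueness part, so $\Phi_{A,C}$ is a bijection. Naturality in $C$ is immediate: for $f\,:\,C\rightarrow C'$ one has $F_B(f)\circ\Phi_{A,C}(\psi)=(\id_B\ot f)(\id\ot\psi)\rho_{A,B}=(\id\ot(f\circ\psi))\rho_{A,B}=\Phi_{A,C'}(f\circ\psi)$ by Proposition~\ref{PropMinTens}. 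Naturality in $A$ is where the definition of $G_B(\varphi)$ enters: for $\varphi\,:\,A\rightarrow A'$ and $\psi\,:\,\Ucal(A',B)\rightarrow C$,
$$\Phi_{A,C}(\psi\circ G_B(\varphi))=(\id\ot\psi)(\id\ot G_B(\varphi))\rho_{A,B}=(\id\ot\psi)(\rho_{A',B}\circ\varphi)=\Phi_{A',C}(\psi)\circ\varphi,$$
which is exactly the compatibility required for $\Phi$ to be natural in $A$.

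I expect no genuine obstacle here, as the authors signal by calling the result a direct consequence; the only points demanding care are bookkeeping ones. One must keep the variance straight, since $G_B$ is covariant while $A$ appears contravariantly in both hom-sets, and one must ensure throughout that the targets live in $\Ccal$, i.e.\ that the relevant C*-algebras are locally separable, so that the universal property of Theorem~\ref{ThmUniversalPro} actually applies at each invocation. With these checks, the family $(\Phi_{A,C})$ is a natural isomorphism $\mathrm{Mor}_\Ccal(G_B(-),-)\cong\mathrm{Mor}_\Ccal(-,F_B(-))$, which is precisely the statement that $G_B$ is left adjoint to $F_B$.
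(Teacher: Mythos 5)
Your proposal is correct and follows essentially the same route as the paper: both establish the adjunction by using the universal property of Theorem~\ref{ThmUniversalPro} to define the hom-set bijection $\mathrm{Mor}_\Ccal(\Ucal(A,B),C)\cong\mathrm{Mor}_\Ccal(A,B\ot C)$ (your $\Phi_{A,C}$ is exactly the inverse of the paper's $\eta_{A,C}$) and then checking naturality. The only difference is one of detail: the paper declares naturality ``easy to check,'' whereas you spell out the functoriality of $G_B$ and both naturality computations explicitly, all of which are correct.
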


\begin{proof}
Fix $B\in{\rm Obj}(\Ccal)$. Given $A,C\in{\rm Obj}(\Ccal)$ we consider the map $$\eta_{A,C}\,:\,{\rm Mor}_\Ccal(A,B\ot C)\rightarrow{\rm Mor}_\Ccal(\Ucal(A,B),C)$$
which maps $\varphi\in{\rm Mor}_\Ccal(A,B\ot C)$ to the unique $\eta_{A,C}(\varphi)\in{\rm Mor}_\Ccal(\Ucal(A,B),C)$ such that $$(\id\ot\eta_{A,C})\rho_{A,B}=\varphi.$$
Note that $\eta_{A,C}$ is bijective, its inverse bijection is given by: $$\eta_{A,C}^{-1}\,:\,{\rm Mor}_\Ccal(\Ucal(A,B),C)\rightarrow{\rm Mor}_\Ccal(A,B\ot C), \quad\pi\mapsto(\id\ot\pi)\rho_{A,B}.$$
It is easy to check that $\eta_{A,C}$ is natural in $A$ and $C$.
\end{proof}

\begin{remark}
Theorems \ref{ThmUniversalPro} and \ref{ThmAdjoint} could be deduced from Freyd's adjoint functor Theorem, using only the properties of the abstract category of locally separable unital locally C*-algebra described in Section \ref{SectionLocally}. However, we have chosen to present a self contained and less abstract proof.
\end{remark}

\subsection{The quantum semigroup of the universal quantum endomorphism}

\begin{definition}
A triple $G=(A,\Delta,\varepsilon)$ is called a \textit{unital quantum semigroup} if $A$ is a unital locally C*-algebra and $\Delta\,:\,A\rightarrow A\ot A$, $\varepsilon\,:\,A\rightarrow\C$ are continuous unital $*$-homomorphism such that $(\id\ot\Delta)\Delta=(\Delta\ot\id)\Delta$ and $(\id\ot\varepsilon)\Delta=(\varepsilon\ot\id)\Delta=\id_A$.
\end{definition}

\begin{remark}
It follows from the definition that $A^*$ is a unital semigroup with the convolution product $\omega*\mu:=(\omega\ot\mu)\Delta$ and unit $\varepsilon$. We denote by $A^*_{+,1}\subset A^*$ the set of states on $A$ i.e. the set of $\omega\in A^*$ such that $\omega(x^*x)\geq 0$ for all $x\in A$ and $\omega(1)=1$. Note that the convolution product and $\varepsilon\in A^*_{+,1}$ also defines a structure of unital semigroup on $A^*_{+,1}$.
\end{remark}

\noindent We define the natural quantum semigroup structure on $\U(A):=\U(A, A)$ for a given unital C*-algebra $A$. Let $\rho:A\rightarrow A\otimes \U(A)$ be the canonical continuous unital $\ast$-homomorphism.

\begin{proposition}\label{PropSemiGroup}
The following holds.
\begin{enumerate}
    \item There exists a unique continuous unital $*$-homomorphism $$\Delta\,:\,\Ucal(A)\rightarrow\Ucal(A)\ot\Ucal(A)\text{ such that }(\id\ot\Delta)\rho=(\rho\ot\id)\rho.$$
    \item There exists a unique continuous unital $*$-homomorphism $$\varepsilon\,:\,\U(A)\rightarrow\mathbb{C}\textit{ such that }(\id_A\ot\varepsilon)\rho=\id_A.$$
    \item $(\U(A),\Delta,\varepsilon)$ is a unital quantum semigroup.
\end{enumerate}
\end{proposition}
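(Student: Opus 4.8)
The plan is to obtain both $\Delta$ and $\varepsilon$ directly from the universal property of $\rho:=\rho_{A,A}\,:\,A\rightarrow A\ot\U(A)$ furnished by Theorem \ref{ThmUniversalPro}, and then to verify coassociativity and the two counit identities purely through the \emph{uniqueness} clause of that property. Throughout I will use that the minimal tensor product of locally separable unital locally C*-algebras is again locally separable: by Proposition \ref{PropAMTensor} its Arens--Michael quotients have the form $\U(A)_p\ot\U(A)_q$, a tensor product of separable C*-algebras, hence separable. Thus $\U(A)\ot\U(A)$ and $\U(A)\ot\U(A)\ot\U(A)$ are legitimate coefficient algebras for which Theorem \ref{ThmUniversalPro} applies.

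For $(1)$, I regard $(\rho\ot\id)\rho$ as a continuous unital $*$-homomorphism $\pi\,:\,A\rightarrow A\ot(\U(A)\ot\U(A))$, using associativity of the minimal tensor product to regroup the target. Theorem \ref{ThmUniversalPro}, applied with coefficient algebra $\U(A)\ot\U(A)$, then yields a unique continuous unital $*$-homomorphism $\Delta:=\widetilde{\pi}$ with $(\id\ot\Delta)\rho=(\rho\ot\id)\rho$. For $(2)$, I view $\id_A$ as the quantum homomorphism $A\rightarrow A\ot\C\cong A$, $a\mapsto a\ot 1$; its universal factorization is the required character $\varepsilon\,:\,\U(A)\rightarrow\C$ with $(\id_A\ot\varepsilon)\rho=\id_A$. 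In both cases uniqueness is part of the conclusion of Theorem \ref{ThmUniversalPro}.

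For $(3)$, the key observation is that $\rho$ acting on the $A$-leg commutes with a map acting on a passive $\U(A)$-leg, giving the identities $(\id\ot\id\ot\Delta)(\rho\ot\id)=(\rho\ot\id\ot\id)(\id\ot\Delta)$ and $(\id\ot\Delta\ot\id)(\rho\ot\id)=\big((\id\ot\Delta)\rho\big)\ot\id$. Feeding the defining relation $(\id\ot\Delta)\rho=(\rho\ot\id)\rho$ into these twice, a short computation shows that both $(\id\ot\Delta)\Delta$ and $(\Delta\ot\id)\Delta$, viewed as morphisms $\U(A)\rightarrow\U(A)\ot\U(A)\ot\U(A)$, satisfy
$$(\id_A\ot\Phi)\rho=(\rho\ot\id\ot\id)(\rho\ot\id)\rho.$$
Since the right-hand side is a fixed quantum homomorphism from $A$ to $A$ with locally separable coefficient algebra $\U(A)\ot\U(A)\ot\U(A)$, the uniqueness clause of Theorem \ref{ThmUniversalPro} forces $(\id\ot\Delta)\Delta=(\Delta\ot\id)\Delta$, which is coassociativity.

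The counit axioms are handled the same way. Using $(\id_A\ot\varepsilon)\rho=\id_A$ together with linearity and continuity of $\rho$, one slices the relation $(\id\ot\Delta)\rho=(\rho\ot\id)\rho$ by $\varepsilon$ in the appropriate leg: slicing the outer $\U(A)$-leg gives $(\id_A\ot(\id\ot\varepsilon)\Delta)\rho=\rho\big((\id_A\ot\varepsilon)\rho\big)=\rho$, and slicing the inner leg gives $(\id_A\ot(\varepsilon\ot\id)\Delta)\rho=\rho$ likewise. As $\id_{\U(A)}$ is the unique morphism $\Phi$ with $(\id_A\ot\Phi)\rho=\rho$, uniqueness yields $(\id\ot\varepsilon)\Delta=\id=(\varepsilon\ot\id)\Delta$, completing the quantum-semigroup axioms. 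I expect the main obstacle to be bookkeeping rather than conceptual: one must track the tensor legs carefully when plugging the defining relations into the composites, and confirm at each step that every coefficient algebra in play is locally separable so that Theorem \ref{ThmUniversalPro} is genuinely applicable; once this is set up, all three parts collapse onto the uniqueness clause.
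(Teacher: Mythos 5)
Your proof is correct, and it diverges from the paper's in an interesting way for part (3). For parts (1) and (2) you argue exactly as the paper does: $\Delta$ and $\varepsilon$ are produced by the universal property of Theorem \ref{ThmUniversalPro}, and you in fact supply a check the paper leaves implicit, namely that $\U(A)\ot\U(A)$ and $\U(A)\ot\U(A)\ot\U(A)$ are locally separable (via Proposition \ref{PropAMTensor} they are projective limits of the separable C*-algebras $\U(A)_p\ot\U(A)_q$, hence locally separable), without which the universal property could not be invoked for these coefficient algebras. For part (3), the paper evaluates both sides of each axiom on the coefficients $x=(\omega\ot\id)\rho(a)$, $\omega\in A^*$, $a\in A$, and concludes by continuity, since these coefficients generate the dense $*$-subalgebra $E_\infty\subset\U(A)$; you instead work at the level of maps, showing that $(\id_A\ot(\id\ot\Delta)\Delta)\rho$ and $(\id_A\ot(\Delta\ot\id)\Delta)\rho$ both equal $(\rho\ot\id\ot\id)(\rho\ot\id)\rho$, and that $(\id\ot\varepsilon)\Delta$, $(\varepsilon\ot\id)\Delta$ and $\id_{\U(A)}$ all satisfy $(\id_A\ot\Phi)\rho=\rho$, and then you invoke the uniqueness clause of Theorem \ref{ThmUniversalPro}. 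The two arguments are morally equivalent --- the uniqueness clause is itself proved in the paper via density of $E_\infty$, and your leg-commutation identities are precisely the unsliced versions of the paper's chains of equalities --- but yours buys a cleaner bookkeeping (no slice functionals, no explicit density argument) at the cost of having to verify local separability of the higher tensor powers so that the uniqueness clause genuinely applies, a point you handle; the paper's generator-plus-density computation, by contrast, needs nothing about $\U(A)^{\ot 3}$ beyond it being a Hausdorff topological $*$-algebra. Both routes yield complete proofs.
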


\begin{proof}
The existence and uniqueness of $\Delta$ and $\varepsilon$ in $(1)$ and $(2)$ is a direct consequence of the locally C*-algebra version of the universal property of $\Ucal(A)$.

\vspace{0.2cm}

\noindent$(3)$. For the co-associativity of $\Delta$, it suffices to show that $(\id\otimes \Delta)\Delta(x)=(\Delta\otimes \id)\Delta(x)$ for $x=(\omega\ot\id)\rho(a)$, where $\omega\in A^*$ and $a\in A$. On the one hand:
\begin{eqnarray*}
(\id\otimes \Delta)\Delta(x)&=&(\id\ot\Delta)\Delta((\omega\ot\id)\rho(a))=(\id\ot\Delta)(\omega\ot\id\ot\id)(\id\ot\Delta)\rho(a)\\
&=&(\id\ot\Delta)(\omega\ot\id\ot\id)(\rho\ot\id)\rho(a)\\
&=&(\omega\ot\id\ot\id\ot\id)(\id\ot\id\ot\Delta)(\rho\ot\id)\rho(a)\\
&=&(\omega\ot\id\ot\id\ot\id)(\rho\ot\id\ot\id)(\id\ot\Delta)\rho(a)\\
&=&(\omega\ot\id\ot\id\ot\id)(\rho\ot\id\ot\id)(\rho\ot\id)\rho(a).
\end{eqnarray*}
On the other hand:
\begin{eqnarray*}
(\Delta\ot\id)\Delta(x)&=&(\Delta\ot\id)(\omega\ot\id\ot\id)(\rho\ot\id)\rho(a)\\
&=&(\omega\ot\id\ot\id\ot\id)(\id\ot\Delta\ot\id)(\rho\ot\id)\rho(a)\\
&=&(\omega\ot\id\ot\id\ot\id)[((\id\ot\Delta)\rho)\ot\id]\rho(a)\\
&=&(\omega\ot\id\ot\id\ot\id)(\rho\ot\id\ot\id)(\rho\ot\id)\rho(a).
\end{eqnarray*}
\noindent It remains to show that $(\id\otimes\varepsilon)\Delta(x)=x=(\varepsilon\otimes\id)\Delta(x)$ for $x=(\omega\ot\id)\rho(a)$:
\begin{eqnarray*}
(\id\otimes\varepsilon)\Delta(x)&=&(\id\ot\varepsilon)\Delta((\omega\ot\id)\rho(a))=(\omega\ot\id\ot\varepsilon)(\id\ot\Delta)\rho(a)\\
&=&(\omega\ot\id)\rho((\id\ot\varepsilon)\rho(a))=(\omega\ot\id)\rho(a)=x
\end{eqnarray*}
We prove in the same way that $(\varepsilon\ot\id)\Delta(x)=x$.
\end{proof}

\noindent For a unital C*-algebra $A$, let $\Lcal(A)$ be the algebra of linear continuous maps from $A$ to $A$ that we view as a unital semigroup, the product being the composition of maps and the unit being $\id_A$. We also denote by $\UCP(A,A)\subset\mathcal{L}(A)$ the subsemigroup of u.c.p. maps from $A$ to $A$.

\begin{proposition}\label{PropStateUCP}
Let $A$ be any unital C*-algebra. The following holds.
\begin{enumerate}
\item The map $\psi\,:\,\Ucal(A)^*\rightarrow\Lcal(A)$, $\omega\mapsto(\id\ot\omega)\rho$ is an homomorphism of unital semigroups.
\item $\psi$ restricts to a unital semigroup homomorphism $\psi\,:\,\U(A)^*_{+,1}\rightarrow{\rm UCP}(A,A)$.
\end{enumerate}
\end{proposition}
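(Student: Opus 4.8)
The plan is to treat the two assertions in turn, verifying first that $\psi$ lands in $\Lcal(A)$ and intertwines the two unital semigroup structures, and then that it carries states to unital completely positive maps. The only substantial input is the interaction between the slice maps $(\id\ot\omega)$ and the defining relation of the comultiplication from Proposition \ref{PropSemiGroup}; everything else is bookkeeping.

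First I would check that $\psi$ takes values in $\Lcal(A)$. Linearity of $\psi(\omega)=(\id\ot\omega)\rho$ is immediate, and continuity follows from Proposition \ref{CorTensor}, which provides a continuous slice $(\id\ot\omega)\,:\,A\ot\Ucal(A)\to A$; since $\rho$ is a continuous $*$-homomorphism, $\psi(\omega)=(\id\ot\omega)\circ\rho\in\Lcal(A)$. For the unit, Proposition \ref{PropSemiGroup}(2) gives directly $\psi(\varepsilon)=(\id\ot\varepsilon)\rho=\id_A$, the identity of $\Lcal(A)$. The core is the product. Unfolding $\omega*\mu=(\omega\ot\mu)\Delta$ (so that $(\id\ot(\omega*\mu))=(\id\ot\omega\ot\mu)(\id\ot\Delta)$, an identity I would justify on elementary tensors and extend by density and continuity) and then invoking the relation $(\id\ot\Delta)\rho=(\rho\ot\id)\rho$ of Proposition \ref{PropSemiGroup}(1), I obtain
\[
\psi(\omega*\mu)=(\id\ot\omega\ot\mu)(\id\ot\Delta)\rho=(\id\ot\omega\ot\mu)(\rho\ot\id)\rho.
\]
On the other side, $\psi(\omega)\circ\psi(\mu)(a)=(\id\ot\omega)\rho\big((\id\ot\mu)\rho(a)\big)$; applying the compatibility between slice maps and $*$-homomorphisms in Proposition \ref{CorTensor} to commute $\rho$ past the slice $(\id\ot\mu)$ shows this also equals $(\id\ot\omega\ot\mu)(\rho\ot\id)\rho(a)$. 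Hence $\psi(\omega*\mu)=\psi(\omega)\circ\psi(\mu)$, so $\psi$ is a homomorphism (not an anti-homomorphism) of unital semigroups.

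For part (2), let $\omega\in\Ucal(A)^*_{+,1}$ be a state. Unitality of $\psi(\omega)$ is clear from $\psi(\omega)(1_A)=(\id\ot\omega)(1_A\ot 1)=\omega(1)1_A=1_A$, using that $\rho$ is unital. For complete positivity I would reduce to the C*-algebra case by factoring $\omega$: being continuous, $\omega$ is dominated by some $p\in S(\Ucal(A))$, so $\ker(p)\subseteq\ker(\omega)$ and $\omega=\widetilde{\omega}\circ\pi_p$ with $\widetilde{\omega}$ a state on the C*-algebra $\Ucal(A)_p$ (Lemma \ref{LemmaClosed}). Then $(\id\ot\pi_p)\circ\rho\,:\,A\to A\ot\Ucal(A)_p$ is a $*$-homomorphism between genuine C*-algebras (using Proposition \ref{PropMinTens}), hence completely positive, while $(\id\ot\widetilde{\omega})\,:\,A\ot\Ucal(A)_p\to A$ is the standard ucp slice map on the minimal C*-tensor product. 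Since $\psi(\omega)=(\id\ot\widetilde{\omega})\circ(\id\ot\pi_p)\circ\rho$ is a composition of completely positive maps, it is completely positive, so $\psi(\omega)\in\UCP(A,A)$. Finally, $\varepsilon$ is itself a state mapping to $\id_A\in\UCP(A,A)$, so by part (1) the restriction $\psi\,:\,\Ucal(A)^*_{+,1}\to\UCP(A,A)$ is a unital semigroup homomorphism.

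The main obstacle is the product identity. One has to be careful, in the locally C*-algebra framework, to make sense of and justify the iterated slice $(\id\ot\omega\ot\mu)$ on $A\ot\Ucal(A)\ot\Ucal(A)$ and the commutation of $\rho$ with a slice map, both of which rest squarely on Proposition \ref{CorTensor}, and to keep track of the order of composition so as to land on a homomorphism rather than its opposite. Once these slice manipulations are set up correctly, the remaining verifications (continuity, unitality, and complete positivity via the factorization through $\Ucal(A)_p$) are routine.
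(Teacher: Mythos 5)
Your proposal is correct and follows essentially the same route as the paper: part (1) is the same slice-map computation using $(\id\ot\Delta)\rho=(\rho\ot\id)\rho$, and part (2) reduces to the C*-level by factoring the state through a C*-algebra quotient of $\Ucal(A)$ and writing $\psi(\omega)$ as a ucp slice of a genuine $*$-homomorphism. The only cosmetic difference is that you factor $\omega$ through $\Ucal(A)_p$ for a dominating seminorm $p$, while the paper factors it as $\widetilde{\omega}\circ\widetilde{\pi}$ through one of the C*-algebras $A_\pi$ of the defining projective system, so that $\psi(\omega)=(\id\ot\widetilde{\omega})\pi$ immediately; these are the same reduction.
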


\begin{proof}$(1)$. For $\omega,\mu\in\U(A)^*$ we have:
\begin{eqnarray*}
\psi(\omega*\mu)&=&(\id\ot\omega*\mu)\rho=(\id\ot\omega\ot\mu)(\id\ot\Delta)\rho=(\id\ot\omega\ot\mu)(\rho\ot\id)\rho\\
&=&(\id\ot\omega)((\id\ot\id\ot\mu)(\rho\ot\id)\rho(\cdot))=(\id\ot\omega)\rho((\id\ot\mu)\rho(\cdot))=\psi(\omega)\circ\psi(\mu).
\end{eqnarray*}
 Moreover, $\psi(\varepsilon)=(\id\ot\varepsilon)\rho=\id_A$. 

 \vspace{0.2cm}
 
 \noindent$(2)$. It suffices to show that if $\omega$ is a state on $\U(A)$ then, $\psi(\omega)=(\id\ot\omega)\rho\in\Lcal(A)$ is a ucp map. By writing $\omega=\widetilde{\omega}\circ\widetilde{\pi}$ for some quantum homomorphism $\pi\,:\, A\rightarrow A\ot A_\pi$ and some state $\widetilde{\omega}\in A_\pi^*$, we see that $\psi(\omega)=(\id\ot\widetilde{\omega})(\id\ot\widetilde{\pi})\rho=(\id\ot\widetilde{\omega})\pi\in{\rm UCP}(A,A)$.\end{proof}

\noindent We will need the following Lemma.

\begin{lemma}\label{LemInvertible}
A ucp map from $A$ to $A$ is invertible in the unital semigroup $\UCP(A,A)$ if and only if it is an isomorphism of C*-algebras.
\end{lemma}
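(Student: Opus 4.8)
The plan is to prove both implications, the substantive one being that invertibility forces $\varphi$ to be multiplicative; the tool throughout is the Kadison--Schwarz inequality. The easy direction is immediate: if $\varphi\,:\,A\rightarrow A$ is a $*$-isomorphism of C*-algebras, then both $\varphi$ and $\varphi^{-1}$ are unital $*$-homomorphisms, hence ucp, and they are mutual inverses in $\UCP(A,A)$, so $\varphi$ is invertible in the semigroup. The content is therefore in the converse.

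For the converse I would start from $\varphi,\psi\in\UCP(A,A)$ with $\varphi\circ\psi=\psi\circ\varphi=\id_A$; in particular both maps are bijections of $A$, and crucially $\psi$ is surjective. The key fact is that every ucp map $\theta$ is $2$-positive, so the Schwarz inequality $\theta(x^*x)\geq\theta(x)^*\theta(x)$ holds for \emph{all} $x\in A$ (not merely self-adjoint ones). Applying this first to $\psi$ and then pushing the inequality through the positive, hence order-preserving, map $\varphi$, and finally applying Schwarz to $\varphi$ itself at the point $\psi(a)$, I would obtain for every $a\in A$ the chain
$$a^*a=\varphi\psi(a^*a)\;\geq\;\varphi\bigl(\psi(a)^*\psi(a)\bigr)\;\geq\;\varphi(\psi(a))^*\varphi(\psi(a))=a^*a.$$
The two extreme terms coincide, so the sandwich forces equality throughout, giving $\varphi\bigl(\psi(a)^*\psi(a)\bigr)=\varphi(\psi(a))^*\varphi(\psi(a))$ for all $a$. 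Running the identical argument with $aa^*$ in place of $a^*a$ yields $\varphi\bigl(\psi(a)\psi(a)^*\bigr)=\varphi(\psi(a))\varphi(\psi(a))^*$.

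Since $\psi$ is surjective, as $a$ ranges over $A$ the element $b:=\psi(a)$ ranges over all of $A$, so these two identities read $\varphi(b^*b)=\varphi(b)^*\varphi(b)$ and $\varphi(bb^*)=\varphi(b)\varphi(b)^*$ for every $b\in A$. Thus the multiplicative domain of $\varphi$ (in the sense of Choi) is the whole of $A$, whence $\varphi$ is multiplicative; being positive it is also $*$-preserving, so $\varphi$ is a unital $*$-homomorphism, and being bijective it is a $*$-isomorphism. The only points requiring care — the ``main obstacle'', though a mild one — are to invoke Schwarz for arbitrary (non-self-adjoint) elements, which is legitimate precisely because ucp maps are completely positive and hence $2$-positive, and to use the surjectivity of $\psi$ to promote the multiplicative-domain identities from the range of $\psi$ to all of $A$.
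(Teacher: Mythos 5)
Your proof is correct and follows essentially the same route as the paper: both arguments apply the Schwarz inequality once to $\varphi$ and once to $\psi$, push one inequality through the order-preserving map $\varphi$, and sandwich to conclude that the multiplicative domain of $\varphi$ is all of $A$. The only cosmetic difference is that the paper evaluates directly at $a$ (setting $b:=\varphi(a)$), which avoids your final surjectivity step, whereas you establish the identity at $\psi(a)$ and then use surjectivity of $\psi$; both are equally valid.
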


\begin{proof}
Suppose that the ucp map $\varphi\,:\,A\rightarrow A$ is invertible in $\UCP(A,A)$ so that there is a ucp map $\psi\,:\,A\rightarrow A$ satisfying $\varphi\psi=\psi\varphi=\id_A$. It suffices to show that any element of $A$ is in the multiplicative domain of $\varphi$. Let $a\in A$ and define $b:=\varphi(a)\in A$ since $\varphi$ and $\psi$ are ucp we have $\varphi(a)^*\varphi(a)\leq\varphi(a^*a)$ and $a^*a=\psi(b)^*\psi(b)\leq\psi(b^*b)$. Applying $\varphi$ to the last inequality we get $\varphi(a^*a)\leq b^*b=\varphi(a)^*\varphi(a)$. Hence, $\varphi(a)^*\varphi(a)=\varphi(a^*a)$ for all $a\in A$.
\end{proof}

\noindent Recall that, whenever $A$ is finite dimensional, $\Ucal(A)$ is a unital C*-algebra and Proposition \ref{PropSemiGroup} shows that $(\Ucal(A),\Delta,\varepsilon)$ is a compact quantum semigroup.

\begin{proposition}
Let $A$ be a finite dimensional unital C*-algebra. The following holds.
\begin{enumerate}
    \item The pair $(\Ucal(A),\Delta)$ is a compact quantum group if and only if $A=\C$.
    \item Every invertible element in the unital semigroup $\Ucal(A)^*_{+,1}$ is a character of $\Ucal(A)$.
\end{enumerate}
\end{proposition}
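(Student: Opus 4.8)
\emph{Part (2).} Let $\omega\in\Ucal(A)^*_{+,1}$ be invertible with convolution inverse $\nu$, so that $\omega*\nu=\nu*\omega=\varepsilon$. The key point is that one should \emph{not} work directly with $\psi(\omega)=(\id\ot\omega)\rho$: Lemma \ref{LemInvertible} would only show this to be a $*$-automorphism of $A$, which is too weak to pin down $\omega$ itself (recall $\psi$ is far from injective on states). Instead I would use the right convolution operator $R_\omega:=(\id\ot\omega)\Delta\,:\,\Ucal(A)\to\Ucal(A)$. Being the slice of the unital $*$-homomorphism $\Delta$ by a state, $R_\omega$ is u.c.p., and a short computation with coassociativity and the counit axioms gives $R_\nu\circ R_\omega=R_{\nu*\omega}=R_\varepsilon=\id$ and likewise $R_\omega\circ R_\nu=\id$. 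Hence $R_\omega$ is invertible in $\UCP(\Ucal(A),\Ucal(A))$, so by Lemma \ref{LemInvertible} (whose proof is valid for an arbitrary unital C*-algebra, here $\Ucal(A)$, which is a genuine C*-algebra since $A$ is finite dimensional) $R_\omega$ is a $*$-automorphism of $\Ucal(A)$. Finally the counit axiom $(\varepsilon\ot\id)\Delta=\id$ gives $\varepsilon\circ R_\omega=\omega$, and since $\varepsilon$ is a character and $R_\omega$ a $*$-automorphism, the composition $\omega=\varepsilon\circ R_\omega$ is multiplicative, i.e. a character.

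\emph{Part (1), the easy direction and the corepresentation setup.} If $A=\C$ then $\Ucal(A)=\Ucal(\C,\C)=\C$ with $\Delta=\id$, the trivial compact quantum group. For the converse I would argue by contradiction through the theory of corepresentations. Fix a linear basis $(f_\alpha)_{\alpha=1}^d$ of $A$ with $f_1=1_A$ and dual basis $(f^\alpha)\subset A^*$, and set $\rho_\alpha(a):=(f^\alpha\ot\id)\rho(a)$, so that $\rho(a)=\sum_\alpha f_\alpha\ot\rho_\alpha(a)$ and the elements $\rho_\alpha(a)$ generate $\Ucal(A)$. Putting $V_{\alpha\beta}:=\rho_\alpha(f_\beta)$, the defining relation $(\id\ot\Delta)\rho=(\rho\ot\id)\rho$ together with $(\id\ot\varepsilon)\rho=\id_A$ yields $\Delta(V_{\alpha\beta})=\sum_\gamma V_{\alpha\gamma}\ot V_{\gamma\beta}$ and $\varepsilon(V_{\alpha\beta})=\delta_{\alpha\beta}$; that is, $V=[V_{\alpha\beta}]\in M_d(\Ucal(A))$ is a finite dimensional corepresentation whose coefficients generate $\Ucal(A)$.

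\emph{Part (1), the contradiction.} If $(\Ucal(A),\Delta)$ were a compact quantum group then $V$, being a finite dimensional corepresentation, would be invertible in $M_d(\Ucal(A))$, the inverse being obtained by applying the antipode entrywise (via $\sum_\gamma V_{\alpha\gamma}S(V_{\gamma\beta})=\varepsilon(V_{\alpha\beta})1=\delta_{\alpha\beta}1$ and its left analogue). To contradict this when ${\rm dim}(A)=d\geq 2$, consider the quantum homomorphism $\pi\,:\,A\to A\ot A$, $\pi(a)=1_A\ot a$, and the induced $\widetilde{\pi}\,:\,\Ucal(A)\to A$ with $(\id\ot\widetilde{\pi})\rho=\pi$. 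Applying the unital $*$-homomorphism $\id_{M_d}\ot\widetilde{\pi}$ to $V$ produces a matrix with entries $\widetilde{\pi}(V_{\alpha\beta})=(f^\alpha\ot\id)\pi(f_\beta)=f^\alpha(1_A)f_\beta=\delta_{\alpha 1}f_\beta$; since $d\geq 2$ every row $\alpha\geq 2$ vanishes, so this matrix is not invertible in $M_d(A)$. As $*$-homomorphisms preserve invertibility, $V$ cannot be invertible in $M_d(\Ucal(A))$, which is the desired contradiction.

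\emph{The main obstacle} is the converse of Part (1). The tempting shortcut---noting that characters of $\Ucal(A)$ correspond bijectively, as a semigroup, to unital $*$-endomorphisms of $A$, and that these must form a group in a compact quantum group---does \emph{not} exclude $A=M_n$, because every unital $*$-endomorphism of a matrix algebra is automatically an automorphism. What rescues the argument is the full fundamental corepresentation $V$ together with a single, deliberately highly non-surjective quantum homomorphism ($a\mapsto 1_A\ot a$) that detects its non-invertibility; this treats all $A$ with ${\rm dim}(A)\geq 2$ uniformly.
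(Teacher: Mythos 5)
Your proof of part (2) is correct, and it takes a genuinely different route from the paper's. The paper applies Lemma \ref{LemInvertible} to $\psi(\omega)=(\id\ot\omega)\rho\in\UCP(A,A)$ and then upgrades multiplicativity from $\rho(A)$ to all of $A\ot\Ucal(A)$ by a multiplicative-domain argument using the explicit generators $\rho^\kappa_{ij}(a)$; you instead apply the same lemma to the convolution operator $R_\omega=(\id\ot\omega)\Delta$ acting on $\Ucal(A)$ itself, deduce from $R_\nu\circ R_\omega=R_{\nu*\omega}=R_\varepsilon=\id$ (and its mirror) that $R_\omega$ is a $*$-automorphism, and recover $\omega=\varepsilon\circ R_\omega$ as a character. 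Your computations are correct, and your argument is arguably cleaner: it uses only the quantum semigroup structure $(\Ucal(A),\Delta,\varepsilon)$ and the fact that $\Ucal(A)$ is a genuine unital C*-algebra, so it in fact proves the statement for any compact quantum semigroup with bounded counit, not just for $\Ucal(A)$.

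Part (1), however, has a genuine gap at the pivotal claim that if $(\Ucal(A),\Delta)$ were a compact quantum group then $V$ would be invertible in $M_d(\Ucal(A))$, ``the inverse being obtained by applying the antipode entrywise.'' The antipode of a compact quantum group is defined only on the dense Hopf $*$-algebra spanned by coefficients of finite dimensional \emph{unitary} corepresentations (and is in general unbounded); you have not shown that the entries $V_{\alpha\beta}$ lie in this Hopf algebra. In Woronowicz's theory invertibility (non-degeneracy) is \emph{part of the definition} of a finite dimensional corepresentation, precisely because a matrix merely satisfying $\Delta(V_{\alpha\beta})=\sum_\gamma V_{\alpha\gamma}\ot V_{\gamma\beta}$ and $\varepsilon(V_{\alpha\beta})=\delta_{\alpha\beta}$ is not known to be invertible in general; so your step is circular. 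The claim can be proved when the Haar state $h$ is \emph{faithful}: one checks $V^*(Q\ot 1)V=Q\ot 1$ with $Q:=(\id\ot h)(V^*V)$, and faithfulness of $h$ together with boundedness of $\varepsilon$ forces $Q$ to be invertible, whence $V$ has a left inverse, and a symmetric argument gives a right inverse. But the Haar state of your hypothetical CQG structure on $\Ucal(A)$ has no reason to be faithful (compare $C^*(\F_2)$ with $\Delta(u_\gamma)=u_\gamma\ot u_\gamma$: a CQG with bounded counit whose Haar state is not faithful), and passing to the reduced version destroys the counit condition your argument needs.

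The good news is that your strategy can be repaired using the same morphism $\widetilde{\pi}$ but replacing corepresentation theory by invariance of the Haar state. Fix a state $\omega\in A^*$ and set $\phi:=(\omega\ot h)\rho\in A^*$. Applying $(h\ot\id)$ to $\Delta\bigl((\omega\ot\id)\rho(a)\bigr)=(\omega\ot\id\ot\id)(\rho\ot\id)\rho(a)$ and using left invariance $(h\ot\id)\Delta=h(\cdot)1$ gives $(\phi\ot\id)\rho(a)=\phi(a)1_{\Ucal(A)}$ for all $a\in A$. Now apply $\widetilde{\pi}$ (which satisfies $(\id\ot\widetilde{\pi})\rho(a)=1_A\ot a$): the left side becomes $\phi(1_A)a=a$, the right side $\phi(a)1_A$, so $a=\phi(a)1_A$ for all $a$, contradicting ${\rm dim}(A)\geq 2$. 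This repaired argument is close in spirit to the paper's proof, which uses the same map $\pi=\widetilde{\pi}$ to exhibit $(A,\,a\mapsto 1\ot a)$ as a quotient quantum semigroup of $(\Ucal(A),\Delta)$ and then invokes the theorem that a compact quantum sub-semigroup of a compact quantum group is itself a compact quantum group, together with the cancellation (density) axiom.
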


\begin{proof}$(1)$. Consider the unital $*$-homomorphism $\Delta_A:A\rightarrow A\otimes A$ defined by $\Delta_A(a):=1\otimes a$, for all $a\in A$. By the universal property of $\Ucal(A)$, there exists a unique unital $\ast$-homomorphism $\pi:\U(A)\rightarrow A$ such that $(\id\otimes\pi)\rho=\Delta_A$. Note that $\pi$ is clearly surjective. For $x=(\omega\ot\id)\rho(a)\in\U(A)$, where $a\in A$ and $\omega\in A^{\ast}$ one has:
\begin{eqnarray*}
(\pi\otimes\pi)\Delta(x)&=&(\pi\otimes\pi)\Delta((\omega\ot\id)\rho(a))=(\pi\otimes\pi)(\omega\otimes\id\otimes\id)(\id\otimes\Delta)\rho(a)\\
&=&(\pi\otimes\pi)(\omega\otimes\id\otimes\id)(\rho\otimes\id)\rho(a)\\
&=&(\omega\otimes\id\otimes\id)(\id\otimes\pi\otimes\pi)(\rho\otimes\id)\rho(a)=(\omega\otimes\id\otimes\id)((\id\otimes\pi)\rho\otimes\pi)\rho(a)\\
&=&(\omega\otimes\id\otimes\id)(\Delta_A\otimes\pi)\rho(a)= ((\omega\otimes\id)\Delta_A\otimes\id)(\id\otimes\pi)\rho(a)\\
&=&((\omega\otimes\id)\Delta_A\otimes\id)\Delta_A(a)
= \omega(1)(1\otimes a).
\end{eqnarray*}
Moreover,
\begin{eqnarray*}
\Delta_A\pi(x)&=&\Delta_A\pi((\omega\ot\id)\rho(a))=\Delta_A(\omega\otimes\id)(\id\otimes\pi)\rho(a)=\Delta_A((\omega\otimes\id)\Delta_A(a))\\
&=& \omega(1)(1\otimes a).
\end{eqnarray*}
It follows that $(\pi\ot\pi)\Delta=\Delta_A\circ\pi$. If $(\Ucal(A),\Delta)$ is a compact quantum group then $(A,\Delta)$, being a compact quantum sub-semigroup of $(\Ucal(A),\Delta)$, is also a compact quantum group (see \cite[Section 5]{Pa13}). In particular, $\Delta_A(A)(1\ot A)=\C1\ot A$ is dense in $A\ot A$. It implies that $A=\C1$.

\vspace{0.2cm}

\noindent$(2)$. Assume that $\omega$ is invertible in the unital semigroup $\U(A)^*_{+,1}$, then by Proposition \ref{PropStateUCP} and Lemma \ref{LemInvertible}, $\psi(\omega)=(\id\ot\omega)\rho$ is an isomorphism from $A$ to $A$. It follows that $\rho(A)\subseteq\mathcal{D}$, where $\mathcal{D}$ is the multiplicative domain of the ucp map $\id\ot\omega\,:\, A\ot \U(A)\rightarrow A$. Note that $A\ot\C1\subseteq\mathcal{D}$. Hence, $\mathcal{D}$ contains the C*-algebra $C$ generated by $A\ot\C1\cup\rho(A)$. We claim that $C=A\ot \U(A)$. Indeed, assuming that $A=\bigoplus_{\kappa=1}^KM_{N_\kappa}(\C)$ and writing $\rho(a)=\sum_{\kappa,i,j} e^\kappa_{ij}\ot\rho^\kappa_{ij}(a)$ we see that, for all $1\leq\kappa\leq K$ and all $1\leq i,j\leq N_\kappa$ we have $\sum_{t=1}^{N_\kappa}(e^\kappa_{tr}\ot 1)\rho(a)(e^\kappa_{st}\ot 1)=1\ot\rho^\kappa_{rs}(a)\in C$. Since $\U(A)$ is generated by $\{\rho^\kappa_{rs}(a)\,:\,1\leq\kappa\leq K,1\leq i,j\leq N_\kappa, a\in A\}$, we conclude that $1\ot\U(A)\subset C$. Hence, $C=A\ot\U(A)=\mathcal{D}$ and it follows that $\id\ot\omega$ is an homomorphism. Hence, $\omega$ is a character.
\end{proof}

\begin{example}\label{ExMn}
When $A=M_N(\C)$ we write $\Ucal_N:=\Ucal(M_N(\C))$. Let $(e_{ij})_{ij}$ be the canonical matrix units in $M_N(\C)$ and write, for all $a\in M_N(\C)$, $\rho(a)=\sum_{i,j}e_{ij}\ot\rho_{ij}(a)$. A direct computation gives, for all $a\in M_N(\C)$ and all $1\leq i,j\leq N$,
\begin{enumerate}
    \item $\Delta(\rho_{ij}(a))=\sum_{k,l}\rho_{ij}(e_{kl})\ot\rho_{kl}(a)$,
    \item $\varepsilon(\rho_{ij}(a))=a_{ij}$, where $a_{ij}\in \C$ are such that $a=\sum_{ij}a_{ij}e_{ij}$.
\end{enumerate}
Moreover, denoting by $G_N:=U(N)/\mathbb{S}^1 I_N$ the projective unitary group, we have:
\begin{enumerate}\setcounter{enumi}{2}
    \item For $u\in G_N$ there is a unique $\chi_u\in{\rm Sp}(\Ucal_N)$ such that $(\id\ot\chi_u)\rho(a)=uau^*$ $\forall a\in M_N(\C)$.
    \item ${\rm Sp}(\U_N)\subset\U_N^*$ with the convolution product from $\U_N^*$ and unit $\varepsilon$ is a group and the map $\psi\,:\,G_N\rightarrow{\rm Sp}(\U_N)$ $(u\mapsto\chi_u)$ is a group isomorphism.
\end{enumerate}
Let us prove statements $(3)$ and $(4)$. By the universal property of $\U_N$, characters of $\U_N$ corresponds to unital $*$-homomorphisms from $M_N(\C)$ to $M_N(\C)$. Since $M_N(\C)$ is simple and by a dimension argument, those homomorphisms are actually automorphisms of $M_N(\C)$ which are all interior so in bijection with $G_N$. This proves $(3)$ and the bijectivity of the map $\psi$ defined in $(4)$. Note that $\psi^{-1}$ is actually the restriction to $G_n={\rm Aut}(M_N(\C))\subset\Lcal(M_N(\C))$ of the unital semigroup homomorphism defined in Proposition \ref{PropStateUCP}. It follows that $\psi$ is a group isomorphism. It could also be shown by a direct computation, using the formulas in $(1)$ and $(2)$.\end{example}

%\bibliography{ref.bib}

\begin{thebibliography}{CKS{\etalchar{+}}23}
	
	\bibitem[BDJ08]{BDJ08}
	Nathanial~P. Brown, Kenneth~J. Dykema, and Kenley Jung, \emph{Free entropy
		dimension in amalgamated free products}, Proc. Lond. Math. Soc. (3)
	\textbf{97} (2008), no.~2, 339--367.
	
	\bibitem[BHR14]{BHR14}
	R\'emi Boutonnet, Cyril Houdayer, and Sven Raum, \emph{Amalgamated free product
		type {${\rm III}$} factors with at most one {C}artan subalgebra}, Compos.
	Math. \textbf{150} (2014), no.~1, 143--174. \MR{3164361}
	
	\bibitem[BO08]{BO08}
	Nathanial~P. Brown and Narutaka Ozawa, \emph{{$C^*$}-algebras and
		finite-dimensional approximations}, Graduate Studies in Mathematics, vol.~88,
	American Mathematical Society, Providence, RI, 2008.
	
	\bibitem[Boc97]{Bo97}
	Florin~P. Boca, \emph{A note on full free product {$C^*$}-algebras, lifting and
		quasidiagonality}, Operator theory, operator algebras and related topics
	({T}imi\c{s}oara, 1996), Theta Found., Bucharest, 1997, pp.~51--63.
	\MR{1728412}
	
	\bibitem[Bro81]{Br81}
	Lawrence~G. Brown, \emph{Ext of certain free product {$C\sp{\ast} $}-algebras},
	J. Operator Theory \textbf{6} (1981), no.~1, 135--141.
	
	\bibitem[CKS{\etalchar{+}}23]{CKSVW21}
	Martijn Caspers, Mario Klisse, Adam Skalski, Gerrit Vos, and Mateusz
	Wasilewski, \emph{Relative {H}aagerup property for arbitrary von {N}eumann
		algebras}, Adv. Math. \textbf{421} (2023), Paper No. 109017, 61. \MR{4574485}
	
	\bibitem[DTA14]{DyTo14}
	Ken Dykema and Francisco Torres-Ayala, \emph{Primitivity of unital full free
		products of residually finite dimensional {$C^*$}-algebras}, J. Funct. Anal.
	\textbf{267} (2014), no.~11, 4519--4558. \MR{3269885}
	
	\bibitem[Dyk04]{Dy04}
	Kenneth~J. Dykema, \emph{Exactness of reduced amalgamated free product
		{$C^*$}-algebras}, Forum Math. \textbf{16} (2004), no.~2, 161--180.
	
	\bibitem[EL92]{MR1168356}
	Ruy Exel and Terry~A. Loring, \emph{Finite-dimensional representations of free
		product {$C^*$}-algebras}, Internat. J. Math. \textbf{3} (1992), no.~4,
	469--476. \MR{1168356}
	
	\bibitem[FG18]{FG18}
	Pierre Fima and Emmanuel Germain, \emph{The {KK}-theory of fundamental {$\rm
			C^*$}-algebras}, Trans. Amer. Math. Soc. \textbf{370} (2018), no.~10,
	7051--7079. \MR{3841842}
	
	\bibitem[FG20]{FG20}
	\bysame, \emph{The {KK}-theory of amalgamated free products}, Adv. Math.
	\textbf{369} (2020), 107174, 35. \MR{4093607}
	
	\bibitem[Fra05]{Fr05}
	Maria Fragoulopoulou, \emph{Topological algebras with involution},
	North-Holland Mathematics Studies, vol. 200, Elsevier Science B.V.,
	Amsterdam, 2005. \MR{2172581}
	
	\bibitem[Har19]{Ha19}
	Samuel~J. Harris, \emph{A non-commutative unitary analogue of {K}irchberg's
		conjecture}, Indiana Univ. Math. J. \textbf{68} (2019), no.~2, 503--536.
	
	\bibitem[HMV19]{HMV19}
	Cyril Houdayer, Amine Marrakchi, and Peter Varraedt, \emph{Fullness and
		{C}onnes’ {$\tau$} invariant of type {${\rm III}$} tensor product factors},
	J. Math. Pures Appl. \textbf{121} (2019), 113–134.
	
	\bibitem[KR97]{KRbook}
	Richard~V. Kadison and John~R. Ringrose, \emph{Fundamentals of the theory of
		operator algebras. {V}ol. {I}}, Graduate Studies in Mathematics, vol.~15,
	American Mathematical Society, Providence, RI, 1997, Elementary theory,
	Reprint of the 1983 original. \MR{1468229}
	
	\bibitem[Man23]{Ma23}
	Malay Mandal, \emph{Some {$C^*$}-algebraic properties of non-commutative
		unitary {$C^*$}-algebra and its state space structure}, Proc. Indian Acad.
	Sci. Math. Sci. \textbf{133} (2023), no.~2, Paper No. 33, 19.
	
	\bibitem[McC92]{McC92}
	Kevin McClanahan, \emph{{$C^*$}-algebras generated by elements of a unitary
		matrix}, J. Funct. Anal. \textbf{107} (1992), no.~2, 439--457.
	
	\bibitem[McC95]{McC95}
	\bysame, \emph{{$K$}-theory for certain reduced free products of
		{$C^*$}-algebras}, J. Operator Theory \textbf{33} (1995), no.~2, 201--221.
	
	\bibitem[Pat13]{Pa13}
	Issan Patri, \emph{Normal subgroups, center and inner automorphisms of compact
		quantum groups}, Internat. J. Math. \textbf{24} (2013), no.~9, 1350071, 37.
	\MR{3109443}
	
	\bibitem[Phi88]{Ph88}
	N.~Christopher Phillips, \emph{Inverse limits of {$C^*$}-algebras and
		applications}, Operator algebras and applications, {V}ol. 1, London Math.
	Soc. Lecture Note Ser., vol. 135, Cambridge Univ. Press, Cambridge, 1988,
	pp.~127--185. \MR{996445}
	
	\bibitem[Pis96]{Pi96}
	Gilles Pisier, \emph{A simple proof of a theorem of {K}irchberg and related
		results on {$C^*$}-norms}, J. Operator Theory \textbf{35} (1996), no.~2,
	317--335. \MR{1401692}
	
	\bibitem[Rie83]{Ri83}
	Marc~A. Rieffel, \emph{Dimension and stable rank in the {K}-theory of
		{C}*-algebras}, Proceedings of the London Mathematical Society \textbf{s3-46}
	(1983), no.~2, 301--333.
	
	\bibitem[RS22]{RS22}
	Sven Raum and Adam Skalski, \emph{Classifying right-angled {H}ecke
		{C}*-algebras via {K}-theoretic invariants}, Adv. Math. \textbf{407} (2022),
	Paper No. 108559, 24. \MR{4452673}
	
	\bibitem[St09]{So08}
	Piotr~M. So\l~tan, \emph{Quantum families of maps and quantum semigroups on
		finite quantum spaces}, J. Geom. Phys. \textbf{59} (2009), no.~3, 354--368.
	
	\bibitem[Thi22]{Th22}
	Hannes Thiel, \emph{Diffuse traces and {H}aar unitaries}, preprint arXiv
	2009.06940, 2022.
	
	\bibitem[Ued11a]{Ue11a}
	Yoshimichi Ueda, \emph{Factoriality, type classification and fullness for free
		product von {N}eumann algebras}, Adv. Math. \textbf{228} (2011), no.~5,
	2647--2671. \MR{2838053}
	
	\bibitem[Ued11b]{Ue11b}
	\bysame, \emph{On type {$\rm III_1$} factors arising as free products}, Math.
	Res. Lett. \textbf{18} (2011), no.~5, 909--920. \MR{2875863}
	
\end{thebibliography}
%\bibliographystyle{amsalpha}

\newcommand{\etalchar}[1]{$^{#1}$}
\providecommand{\bysame}{\leavevmode\hbox to3em{\hrulefill}\thinspace}
\providecommand{\MR}{\relax\ifhmode\unskip\space\fi MR }
% \MRhref is called by the amsart/book/proc definition of \MR.
\providecommand{\MRhref}[2]{%
	\href{http://www.ams.org/mathscinet-getitem?mr=#1}{#2}
}
\providecommand{\href}[2]{#2}

\end{document}